\newtheorem*{theo}{Theorem}
\newtheorem{theorem}{Theorem}[section]
\newtheorem{conjecture}[theorem]{Conjecture}
\newtheorem{corollary}[theorem]{Corollary}
\newtheorem{lemma}[theorem]{Lemma}
\newtheorem{proposition}[theorem]{Proposition}
\begin{document}

\sloppy

\title{Actions of the group of homeomorphisms of the circle on surfaces}
\author{E. Militon \footnote{supported by the Fondation Mathématique Jacques Hadamard}}
\date{\today}
\maketitle

\setlength{\parskip}{10pt}

\selectlanguage{english}
\begin{abstract}
In this article we describe all the group morphisms from the group of orientation preserving homeomorphisms of the circle to the group of homeomorphisms of the annulus or of the torus.
\end{abstract}
\section{Introduction}

For a compact manifold $M$, we denote by $\mathrm{Homeo}(M)$ the group of homeomorphisms of $M$ and by $\mathrm{Homeo}_{0}(M)$ the connected component of the identity of this group (for the compact-open topology). By a theorem by Fischer (see \cite{Fis} and \cite{Bou}), the group $\mathrm{Homeo}_{0}(M)$ is simple: the study of this group cannot be reduced to the study of other groups. One natural way to have a better understanding of this group is to look at its automorphism group. In this direction, Whittaker proved the following theorem.

\begin{theo}{(Whittaker \cite{Whi})}
Given two compact manifolds $M$ and $N$ and for any group isomorphism $\varphi : \mathrm{Homeo}_{0}(M) \rightarrow \mathrm{Homeo}_{0}(N)$, there exists a homeomorphism $h : M \rightarrow N$ such that the morphism $\varphi$ is the map $f \mapsto h \circ f \circ h^{-1}$.
\end{theo}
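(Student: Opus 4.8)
The plan is to recover the topology of $M$ from the abstract group $G=\mathrm{Homeo}_{0}(M)$, to build from an isomorphism $\varphi$ a homeomorphism $h:M\to N$, and then to check that $h$ conjugates $\varphi$. Throughout I would rely on two elementary ``richness'' properties of homeomorphism groups of manifolds: for every nonempty open $U\subseteq M$ there is a nontrivial element of $G$ supported in $U$ (a bump homeomorphism, automatically in the identity component), and $G$ acts transitively, indeed highly transitively, on $M$. Both come from the local Euclidean structure of $M$ and will be used constantly to translate geometric statements into algebra.

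First I would attach to each open set $U$ the support subgroup $G_{U}=\{f\in G: f|_{M\setminus U}=\mathrm{id}\}$ and detect disjointness of supports group-theoretically. Writing $S^{\perp}$ for the centralizer of a subset $S\subseteq G$, the richness properties give $G_{U}^{\perp}=G_{M\setminus\overline{U}}$: an element commutes with every homeomorphism supported in $U$ precisely when it is the identity on $\overline{U}$. Consequently $G_{U}^{\perp\perp}=G_{\mathrm{int}\,\overline{U}}$, so the subgroups with $H=H^{\perp\perp}$ correspond to regular open sets, with $G_{U}\cap G_{V}=G_{U\cap V}$ and inclusion reflecting inclusion. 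Since $\perp$ is defined purely by commutation it is preserved by $\varphi$; hence $\varphi$ sends these support subgroups of $M$ to the analogous subgroups of $N$ and induces an inclusion-preserving bijection $\Phi$ between the regular open algebras of $M$ and $N$.

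The central difficulty is then to upgrade $\Phi$ to a genuine homeomorphism $h:M\to N$. The Boolean algebra of regular open sets alone does not remember the point-set topology (it cannot even detect dimension), so I cannot simply invoke a Stone-type duality; instead I must recover points and their topology from the finer structure of the action. Following the Rubin-type philosophy for locally moving groups, I would describe a point $x\in M$ algebraically as a maximal filter of support subgroups realized along a shrinking family of balls around $x$, using transitivity and local density to ensure these filters are exactly the points and that the induced correspondence is continuous both ways. Carrying this reconstruction out — verifying that $\Phi$ carries the filter of each $x\in M$ to the filter of a well-defined point $h(x)\in N$ and that $h$ is a homeomorphism — is the main obstacle, and it is precisely where the manifold hypotheses on $M$ and $N$ are indispensable.

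Once $h$ is available the conjugation formula is formal. For every $f\in G$ and every open $U$ one has $fG_{U}f^{-1}=G_{f(U)}$, since conjugation transports supports. Applying $\varphi$ and using $\varphi(G_{U})=G_{h(U)}$ yields $\varphi(f)\,G_{h(U)}\,\varphi(f)^{-1}=G_{h(f(U))}$, while the left-hand side equals $G_{\varphi(f)(h(U))}$. Injectivity of $V\mapsto G_{V}$ forces $\varphi(f)(h(U))=h(f(U))$ for all regular open $U$, so $\varphi(f)\circ h$ and $h\circ f$ agree on a basis and hence as homeomorphisms. Therefore $\varphi(f)=h\circ f\circ h^{-1}$ for all $f$, as desired.
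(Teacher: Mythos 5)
The central step of your argument is missing, and you say so yourself: after constructing the inclusion-preserving bijection $\Phi$ between the regular open algebras of $M$ and $N$, you describe the passage from $\Phi$ to an actual point map $h$ as ``the main obstacle'' and do not carry it out. But that passage \emph{is} the theorem. Everything before it (the computation $G_{U}^{\perp}=G_{M\setminus\overline{U}}$, hence $G_{U}^{\perp\perp}=G_{\mathrm{int}\,\overline{U}}$, and the fact that $\varphi$ preserves double centralizers) and everything after it (the formal conjugation computation from $\varphi(G_{U})=G_{h(U)}$ and $fG_{U}f^{-1}=G_{f(U)}$) is routine; the entire difficulty of Whittaker's theorem is concentrated in showing that the group-theoretically defined filters you allude to are realized by points, that distinct points give distinct filters in $N$, and that the resulting bijection and its inverse are continuous. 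A one-sentence appeal to ``the Rubin-type philosophy for locally moving groups'' does not substitute for this; Rubin's reconstruction theorem postdates Whittaker and is itself a substantial piece of work, so invoking it without proof (or even a precise statement) leaves the proposal as a plausible plan rather than a proof.

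For comparison: the paper does not prove this theorem either (it is quoted from Whittaker), but the sketch it gives in the introduction follows Whittaker's original route, which works with the stabilizer subgroups $G_{x}$ of individual points rather than with support subgroups of open sets and the regular open algebra. One shows directly that $\varphi(G_{x})$ is again a point stabilizer $G_{y_{x}}$ and sets $h(x)=y_{x}$. Your support-subgroup approach is the one that generalizes furthest (it is the skeleton of Rubin's theorem and of Filipkiewicz-type results), but in both approaches the same step --- identifying, inside the target group, which algebraically defined object corresponds to a point, and proving continuity of the resulting map --- is where all the work lies, and it is exactly the step you have left open.
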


This theorem was generalized by Filipkiewicz (see \cite{Fil}) to groups of diffeomorphisms by using a powerful theorem by Montgomery and Zippin which characterizes Lie groups among locally compact groups. The idea of the proof of Whittaker's theorem is the following: we see the manifold $M$ algebraically by considering the subgroup $G_{x}$ of the group of homeomorphism of $M$ consisting of homeomorphisms which fix the point $x$ in $M$. One proves that, for any point $x$ in $M$, there exists a unique point $y_{x}$ in $N$ such that $\varphi(G_{x})$ is the group of homeomorphisms of $N$ which fix the point $y_{x}$. Let us define $h$ by $h(x)=y_{x}$. Then we check that $h$ is a homeomorphism and that the homeomorphism $h$ satisfies the conclusion of the theorem. However, Whittaker's proof uses crucially the fact that we have a group isomorphism and cannot be generalized easily a priori to the cases of group morphisms. Here is a conjecture in this case.

\begin{conjecture} \label{Whittaker}
For a compact manifold $M$, every group morphism $\mathrm{Homeo}_{0}(M) \rightarrow \mathrm{Homeo}_{0}(M)$ is either trivial or induced by a conjugacy by a homeomorphism.
\end{conjecture}

This conjecture is solved in \cite{Mat} in the case of the circle. It is proved also in \cite{Man} in the case of groups of diffeomorphisms of the circle. We can also be interested in morphisms from the group $\mathrm{Homeo}_{0}(M)$ to the group of homeomorphisms of another manifold $\mathrm{Homeo}_{0}(N)$. This kind of questions is adressed in \cite{Man} for diffeomorphisms in the case where $N$ is a circle or the real line. The following conjecture looks attainable.

\begin{conjecture}
Denote by $S-D$ a closed surface $S$ with one open disc removed. Denote by $\mathrm{Homeo}_{0}(S-D)$ the identity component of the group of homeomorphisms $S-D$ with support contained in the interior of this surface. Every group morphism from $\mathrm{Homeo}_{0}(S-D)$ to $\mathrm{Homeo}_{0}(S)$ is induced by an inclusion of $S-D$ in $S$.
\end{conjecture}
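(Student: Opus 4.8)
The plan is to reconstruct a topological embedding $h : S - D \hookrightarrow S$ from the algebra of $\Phi$, in the spirit of Whittaker's theorem: rather than reading off stabilizers of points (which requires an isomorphism), I would read off the \emph{supports} of the images of local subgroups. Write $\mathring{\Sigma} = \mathrm{int}(S-D) = S \setminus \bar{D}$, so that $G := \mathrm{Homeo}_{0}(S-D)$ is the identity component of the group of compactly supported homeomorphisms of the open surface $\mathring{\Sigma}$. This group is simple, so $\Phi$ is either trivial or injective; the trivial morphism is excluded (it is not induced by an inclusion), and I assume $\Phi$ injective from now on. The target to be matched is the extension-by-the-identity morphism attached to the inclusion $\mathring{\Sigma} \subset S$.

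First I would isolate the local subgroups. For each open disc $U \subset \mathring{\Sigma}$ let $G_{U} \le G$ be the subgroup of elements supported in $U$; each $G_U$ is again simple and isomorphic to the compactly supported homeomorphism group of the plane, the family $\{G_U\}$ generates $G$ by fragmentation, $G_U \le G_V$ when $U \subset V$, and $[G_U, G_V] = 1$ when $U \cap V = \emptyset$. Setting $H_U = \Phi(G_U)$ and letting $\mathrm{Supp}(H_U) \subset S$ denote the closure of the complement of the common fixed point set of $H_U$, the containment and commutation relations transport to the $H_U$. The key structural step is to show that these relations force $\mathrm{Supp}(H_U)$ to behave like the support of a genuine local action: monotone in $U$, with disjoint interiors when the $U$'s are disjoint, and shrinking to a single point as $U$ shrinks to a point. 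One then defines $h(x) = \bigcap_{U \ni x} \mathrm{Supp}(H_U)$ for $x \in \mathring{\Sigma}$.

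Then I would prove that $h : \mathring{\Sigma} \to S$ is a topological embedding: continuity and injectivity follow from the monotonicity and disjointness just described, while openness follows from invariance of domain once $h$ is known to be a continuous injection between surfaces. At this point the image $Y = h(\mathring{\Sigma})$ is an open subsurface of $S$, and the injective morphism identifies $G$ with $\Phi(G)$ acting on $Y$. To conclude that $\Phi(f) = h f h^{-1}$ on $Y$ and the identity off $Y$, I would invoke a reconstruction theorem of Rubin type for locally moving groups: the abstract isomorphism $G \cong \Phi(G)$, together with the identification of local supports, is implemented by the homeomorphism $h$. Finally, since $\mathring{\Sigma}$ is $S$ minus a closed disc, the complement $S \setminus Y$ must be shown to be a single tame disc, so that $h$ extends to the desired inclusion $S - D \hookrightarrow S$ and $\Phi$ is exactly the extension-by-the-identity morphism it induces.

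The hard part will be the non-degeneracy of the supports, i.e. the claim in the second step that $\bigcap_{U \ni x} \mathrm{Supp}(H_U)$ is a single point and that $h$ neither collapses dimension nor spreads a point into a continuum. Purely algebraic commutation data does not obviously prevent $\Phi$ from sending a local planar subgroup to a subgroup with wild or lower-dimensional support, so controlling the topology of $\mathrm{Supp}(H_U)$ — presumably through an automatic-continuity input forcing $\Phi$ to be continuous, followed by a careful analysis of how commuting planar subgroups can sit inside $\mathrm{Homeo}_{0}(S)$ — is where the real work lies. A secondary difficulty is the behavior near the boundary circle $\partial(S-D)$ and the identification of $S \setminus Y$ with a disc, which is needed to pin the conclusion down to an honest inclusion rather than merely an embedding of the interior.
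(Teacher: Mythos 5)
The statement you are trying to prove is not proved in the paper at all: it appears there as an open \emph{conjecture}, with no argument offered, so there is no proof of record to compare your strategy against. What you have written is a program in the style of Whittaker/Rubin, and as a program it is reasonable, but it is not a proof: the decisive steps are announced rather than established. The central gap is the one you yourself flag, namely that $\bigcap_{U \ni x} \mathrm{Supp}(H_U)$ is a single point and that the resulting map $h$ is a continuous injection. Nothing in the purely algebraic data (monotonicity, commutation of disjointly supported subgroups, simplicity of the $G_U$) rules out a priori that $\Phi(G_U)$ has support that is all of $S$, or that the intersection over shrinking $U$ is a nondegenerate continuum; in the paper's own one-dimensional analogue (Theorem \ref{actionsdersurr}) this is exactly where all the work is, and it is done with tools that are specifically one-dimensional or annular: fragmentation into two interval stabilizers (Lemma \ref{fragstab}), the order structure of $\mathbb{R}$ to extract extremal fixed points, bounded displacement of a fundamental domain via word-length estimates, vanishing of bounded cohomology to lift actions, and prime-end compactifications. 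The paper moreover warns at the end of Section 6 that the key boundedness lemma (Lemma \ref{domfondann}) already fails for actions on the open annulus, so the analogous non-degeneracy statements for a higher-genus $S-D$ acting on $S$ cannot be expected to follow from soft arguments; this is precisely why the statement is left as a conjecture.

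Two further points would need real arguments even granting the support analysis. First, to invoke a Rubin-type reconstruction theorem you must verify its hypotheses for the \emph{image} group acting on $Y$ (that it is locally moving/locally dense there), which again presupposes the control of supports you have deferred; and Rubin's theorem produces a homeomorphism between reconstructed spaces, which still has to be identified with your $h$ and shown to intertwine $\Phi$ with the extension-by-identity morphism, including that $\Phi(f)$ is the identity off $Y$. Second, the complement $S\setminus Y$ of an open embedded copy of $\mathring{\Sigma}$ need not a priori be a tame closed disc (it could be a wild or lower-dimensional continuum), and without that, $\Phi$ is an embedding of the interior rather than a morphism ``induced by an inclusion of $S-D$ in $S$'' in the sense of the statement. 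As it stands, then, your proposal identifies the right obstacles but does not overcome them, and the statement remains open.
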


In this article, we investigate the case of group morphisms from the group $\mathrm{Homeo}_{0}(\mathbb{S}^{1})$ of orientation-preserving homeomorphisms of the circle to the group $\mathrm{Homeo}(S)$ of homeomorphisms of a compact orientable surface $S$. By simplicity of the group $\mathrm{Homeo}_{0}(\mathbb{S}^{1})$, such a morphism $\varphi$ is either one-to-one or trivial. Moreover, as the quotient group $\mathrm{Homeo}(S)/\mathrm{Homeo}_{0}(S)$ is countable, any morphism $\mathrm{Homeo}_{0}(\mathbb{S}^{1}) \rightarrow \mathrm{Homeo}(S)/\mathrm{Homeo}_{0}(S)$ is trivial. Hence, the image of the morphism $\varphi$ is contained in the group $\mathrm{Homeo}_{0}(S)$. By a theorem by Rosendal and Solecki, any group morphism from the group of orientation-preserving homeomorphisms of the circle to a separable group is continuous (see \cite{RS} Theorem 4 and Proposition 2): the group morphisms under consideration are continuous. If the surface $S$ is different from the sphere, the torus, the closed disc or the closed annulus, then any compact subgroup of the group $\mathrm{Homeo}_{0}(S)$ is trivial by a theorem by Kerekjarto (see \cite{Kol}). Therefore, the group $\mathrm{Homeo}_{0}(S)$ does not contain a subgroup isomorphic to $\mathrm{SO}(2)$ whereas the group $\mathrm{Homeo}_{0}(\mathbb{S}^{1})$ does and the morphism $\varphi$ is necessarily trivial. We study in what follows the remaining cases.

In the second section, we state our classification theorem of actions of the group $\mathrm{Homeo}_{0}(\mathbb{S}^{1})$ on the torus and on the annulus. Any action will be obtained by gluing actions which preserve a lamination by circles and actions which are transitive on open annuli. In the third section, we describe the continuous actions of the group of compactly-supported homeomorphisms of the real line on the real line or on the circle: this description is useful for the classification theorem and interesting in itself. The fourth and fifth sections are devoted respectively to the proof of the classification theorem in the case of the closed annulus and in the case of the torus. Finally, we discuss the case of the sphere and of the closed disc in the last section.

\section{Description of the actions}

All the actions of the group $\mathrm{Homeo}_{0}(\mathbb{S}^{1})$ will be obtained by gluing elementary actions of this group on the closed annulus. In this section, we will first describe these elementary actions before describing some model actions to which any continuous action will be conjugate.

The easiest action of the group $\mathrm{Homeo}_{0}(\mathbb{S}^{1})$ on the closed annulus $\mathbb{A}=[0,1] \times \mathbb{S}^{1}$ preserves of a foliation by circles of this annulus:
$$ \begin{array}{rrcl}
p: & \mathrm{Homeo}_{0}(\mathbb{S}^{1}) & \rightarrow & \mathrm{Homeo}(\mathbb{A}) \\
 & f & \mapsto & ((r, \theta) \mapsto (r, f(\theta)))
\end{array}
.
$$

The second elementary action we want to describe is a little more complex. Let $\pi: \mathbb{R} \rightarrow \mathbb{R} /\mathbb{Z}= \mathbb{S}^{1}$ be the projection. For a point $\theta$ on the circle $\mathbb{S}^{1}= \mathbb{R} / \mathbb{Z}$, we denote by $\tilde{\theta}$ a lift of $\theta$, \emph{i.e.} a point of the real line which projects on $\theta$. For a homeomorphism $f$ of the circle, we denote by $\tilde{f}$ a lift of $f$, \emph{i.e.} an element of the group $\mathrm{Homeo}_{\mathbb{Z}}(\mathbb{R})$ of homeomorphisms of the real line which commute to integral translations such that $\pi \circ \tilde{f}=f \circ \pi$. A second elementary action is given by:
$$\begin{array}{rrcl}
a_{-}: & \mathrm{Homeo}_{0}(\mathbb{S}^{1}) & \rightarrow & \mathrm{Homeo}(\mathbb{A}) \\
 & f & \mapsto & ((r, \theta) \mapsto (\tilde{f}(\tilde{\theta})-\tilde{f}(\tilde{\theta}-r), f(\theta))
\end{array}
.$$ 
Notice that the number $\tilde{f}(\tilde{\theta})-\tilde{f}(\tilde{\theta}-r)$ does not depend on the lifts $\tilde{\theta}$ and $\tilde{f}$ chosen and belongs to the interval $[0,1]$. An action analogous to this last one is given by:
$$\begin{array}{rrcl}
a_{+}: & \mathrm{Homeo}_{0}(\mathbb{S}^{1}) & \rightarrow & \mathrm{Homeo}(\mathbb{A}) \\
 & f & \mapsto & ((r, \theta) \mapsto (\tilde{f}(\tilde{\theta}+r)-\tilde{f}(\tilde{\theta}), f(\theta)))
\end{array}
.$$
Notice that the actions $a_{+}$ and $a_{-}$ are conjugate via the homeomorphism of $\mathbb{A}$ given by $(r, \theta) \mapsto (1-r, \theta)$, which is orientation-reversing, and via the orientation-preserving homeomorphism of $\mathbb{A}$ given by $(r, \theta) \mapsto (r, \theta+r)$. 
We now describe another way to see the action $a_{-}$ (and $a_{+}$). We see the torus $\mathbb{T}^{2}$ as the product $\mathbb{S}^{1} \times \mathbb{S}^{1}$. Let
$$\begin{array}{rrcl}
a_{\mathbb{T}^{2}}: & \mathrm{Homeo}_{0}(\mathbb{S}^{1}) & \rightarrow & \mathrm{Homeo}(\mathbb{T}^{2}) \\
 & f & \mapsto & ((x, y) \mapsto (f(x),f(y)))
\end{array}
.$$
It is easily checked that this defines a morphism.
This action leaves the diagonal $\left\{(x,x), x \in \mathbb{S}^{1} \right\}$ invariant. The action obtained by cutting along the diagonal is conjugate to $a_{-}$. More precisely, let us define
$$\begin{array}{rrcl}
h: & \mathbb{A} & \rightarrow & \mathbb{T}^{2} \\
 & (r,\theta) & \mapsto & (\theta, \theta-r)
\end{array}
.$$
Then, for any element $f$ in the group $\mathrm{Homeo}_{0}(\mathbb{S}^{1})$:
$$h \circ a_{-}(f)= a_{\mathbb{T}^{2}}(f) \circ h.$$

Let $G_{\theta_{0}}$ be the group of homeomorphisms of the circle which fix a neighbourhood of the point $\theta_{0}$. For a point $\theta_{0}$ of the circle, the image by the morphism $a_{-}$ of the group $G_{\theta_{0}}$ leaves the sets $\left\{ (r, \theta_{0}), r \in [0,1] \right\}$ and $\left\{ (r, \theta_{0}+r), r \in [0,1] \right\}$ globally invariant. On each of the connected components of the complement of the union of these two sets with the boundary of the annulus, the action of $G_{\theta_{0}}$ is transitive (this is more easily seen by using the action $a_{\mathbb{T}^{2}}$).

Let us describe now the model actions on the closed annulus which are obtained by gluing the above actions. Take a non-empty compact subset $K \subset [0,1]$ which contains the points $0$ and $1$ and a map $\lambda: [0,1]-K \rightarrow \left\{-1,+1 \right\}$ which is constant on each connected component of $[0,1]-K$. Let us define now an action $\varphi_{K, \lambda}$ of the group $\mathrm{Homeo}_{0}(\mathbb{S}^{1})$ on the closed annulus. To a homeomorphism $f$ in $\mathrm{Homeo}_{0}(\mathbb{S}^{1})$, we associate a homeomorphism $\varphi_{K, \lambda}(f)$ which is defined as follows. If $r \in K$, we associate to a point $(r, \theta) \in \mathbb{A}$ the point $(r, f(\theta))$. If $r$ belongs to a connected component $(r_{1}, r_{2})$ of the complement of the compact set $K$ and if $\lambda((r_{1},r_{2}))=\left\{-1 \right\}$, we associate to the point $(r, \theta) \in \mathbb{A}$ the point
$$ ((r_{2}-r_{1})(\tilde{f}(\theta)-\tilde{f}(\theta-\frac{r-r_{1}}{r_{2}-r_{1}}))+r_{1}, f(\theta))).$$
This last map is obtained by conjugating the homeomorphism $a_{-}(f)$ with the map $(r, \theta) \mapsto (\zeta(r), \theta)$ where $\zeta$ is the unique linear increasing homeomorphism $[0,1] \rightarrow [r_{1},r_{2}]$. If $r$ belongs to a connected component $(r_{1}, r_{2})$ of the complement of the compact set $K$ and if $\lambda((r_{1},r_{2}))=\left\{+1 \right\}$, we associate to the point $(r, \theta) \in \mathbb{A}$ the point
$$ ((r_{2}-r_{1})(\tilde{f}(\theta+\frac{r-r_{1}}{r_{2}-r_{1}})-\tilde{f}(\theta))+r_{1}, f(\theta))).$$
This last map is also obtained after renormalizing $a_{+}(f)$ on the interval $(r_{1},r_{2})$. This defines a continuous morphism $\varphi_{K, \lambda} :\mathrm{Homeo}_{0}(\mathbb{S}^{1}) \rightarrow \mathrm{Homeo}(\mathbb{A})$. To construct an action on the torus, it suffices to identify the point $(0, \theta)$ of the closed annulus with the point $(1,\theta)$. We denote by $\varphi_{K, \lambda}^{\mathbb{T}^{2}}$ the continuous action on the torus obtained this way. By shrinking one of the boundary components (respectively both boundary components) of the annulus to a point (respectively to points), one obtains an action on the closed disc (respectively on the sphere) that we denote by $\varphi_{K,\lambda}^{\mathbb{D}^{2}}$ (respectively $\varphi_{K,\lambda}^{\mathbb{S}^{2}}$).

The main theorem of this article is the following:

\begin{theorem} \label{cerclesursurfaces}
Any non-trivial action of the group $\mathrm{Homeo}_{0}(\mathbb{S}^{1})$ on the closed annulus is conjugate to one of the actions $\varphi_{K,\lambda}$. Any non-trivial action of the group $\mathrm{Homeo}_{0}(\mathbb{S}^{1})$ on the torus is conjugate to one of the actions $\varphi_{K, \lambda}^{\mathbb{T}^{2}}$.
\end{theorem}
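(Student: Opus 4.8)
The plan is to start from the structure already extracted in the introduction: a non-trivial $\varphi$ is injective (by simplicity), continuous (by Rosendal--Solecki) and takes values in $\mathrm{Homeo}_{0}(S)$. First I would normalise the maximal compact subgroup. The rotations $R_{\alpha}:\theta\mapsto\theta+\alpha$ form a subgroup $\mathrm{SO}(2)\subset\mathrm{Homeo}_{0}(\mathbb{S}^{1})$ whose image is a circle subgroup of $\mathrm{Homeo}_{0}(S)$; being a non-trivial compact subgroup with $S$ the annulus or the torus, Kerekjarto's theorem (\cite{Kol}) allows us to conjugate $\varphi$ so that $\varphi(R_{\alpha})(r,\theta)=(r,\theta+\alpha)$ in a product structure $S=[0,1]\times\mathbb{S}^{1}$ (resp. $\mathbb{S}^{1}\times\mathbb{S}^{1}$). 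From now on the rotations act by these standard rotations, which are \emph{free}; I write $\rho$ for the projection onto the orbit space $[0,1]$ (resp. $\mathbb{S}^{1}$).

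Next I would reconstruct the invariant lamination. For $\theta_{0}\in\mathbb{S}^{1}$ let $G_{\theta_{0}}$ be the germ-stabiliser, which is isomorphic to $\mathrm{Homeo}_{c}(\mathbb{R})$, and put $F_{\theta_{0}}=\mathrm{Fix}(\varphi(G_{\theta_{0}}))$. Since $R_{\alpha}G_{\theta_{0}}R_{-\alpha}=G_{\theta_{0}+\alpha}$ and $\varphi(R_{\alpha})$ is the standard rotation, $F_{\theta_{0}+\alpha}=\varphi(R_{\alpha})(F_{\theta_{0}})$. For $\theta_{0}\neq\theta_{1}$ the two germ-stabilisers generate all of $\mathrm{Homeo}_{0}(\mathbb{S}^{1})$ by fragmentation, so $F_{\theta_{0}}\cap F_{\theta_{1}}$ would be fixed by every rotation and is therefore empty; thus the $F_{\theta_{0}}$ are pairwise disjoint closed sets permuted by the rotations. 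The key local step is to prove that $F_{0}$ lies in the single fibre $\{\theta=0\}$ and projects under $\rho$ onto a closed set $K\ni 0,1$; here I would feed the $\mathrm{Homeo}_{c}(\mathbb{R})$-action of $G_{0}$ on an invariant transversal into the classification of Section 3. Setting $\Lambda=\bigcup_{\theta_{0}}F_{\theta_{0}}$ then gives $\Lambda=K\times\mathbb{S}^{1}$, a closed invariant union of rotation circles. On each circle $\{r\}\times\mathbb{S}^{1}$ with $r\in K$ the induced action of $\mathrm{Homeo}_{0}(\mathbb{S}^{1})$ is non-trivial with standard rotations, so Matsumoto's theorem (\cite{Mat}) forces it to be $\theta\mapsto f(\theta)$: this is exactly the $p$-part of $\varphi_{K,\lambda}$.

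On each connected component $U=(r_{1},r_{2})\times\mathbb{S}^{1}$ of $S\setminus\Lambda$ I would show that the action is transitive and equals, after renormalising $\rho|_{\overline{U}}$ to $[0,1]$, one of $a_{-}$ or $a_{+}$. In the model $G_{\theta_{0}}$ preserves precisely two arcs joining the two ends of $U$ and is transitive on the two complementary strips; I would recover these two $G_{\theta_{0}}$-invariant arcs intrinsically, parametrise each by the Section 3 description of its $\mathrm{Homeo}_{c}(\mathbb{R})$-action, and read off from the standard rotations a locally constant slope $\lambda(U)\in\{-1,+1\}$. Comparing with the presentation of $a_{-}$ through $a_{\mathbb{T}^{2}}$ and the map $h$ of the previous section then identifies the whole action on $U$ with the renormalised $a_{\lambda(U)}$.

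Gluing the circle part over $K$ to the transitive parts over the complementary intervals yields an explicit conjugacy of $\varphi$ with $\varphi_{K,\lambda}$ on the annulus. For the torus one observes that $\Lambda\neq\emptyset$, cuts $S$ along one circle of $\Lambda$ to apply the annulus case, and checks that the boundary identification reproduces $\varphi_{K,\lambda}^{\mathbb{T}^{2}}$. I expect the main obstacle to be the two-dimensional rigidity of the third paragraph together with the verticality of $F_{0}$ in the second: proving genuine transitivity on $U$ and that the \emph{only} surviving invariant is the sign $\lambda$ --- equivalently, that every transitive continuous action of $\mathrm{Homeo}_{0}(\mathbb{S}^{1})$ on the open annulus compatible with standard rotations is exactly $a_{\pm}$. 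Gaining control of the radial direction is precisely where the one-dimensional results of Section 3 are indispensable, and this is where the delicate analysis will concentrate.
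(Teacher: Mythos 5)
Your plan reproduces the architecture of the paper's proof (normalise the rotations via Kerekjarto, build the invariant lamination $K\times\mathbb{S}^{1}$ from the fixed-point sets $F_{\theta_{0}}$, classify the action on each complementary open annulus as $a_{\pm}$, then glue), but it stops exactly where the real work begins, and at three points the missing content is a genuine idea rather than a routine verification. First, for the transitive part: you say you would ``recover the two $G_{\theta_{0}}$-invariant arcs intrinsically'' and admit this is the crux, but you give no mechanism for producing any invariant arc joining the two boundary circles. The paper's mechanism is specific: one first proves a uniform bound on the diameter of the image of a fundamental domain under all lifts $\widetilde{\varphi(f)}$ (a distortion argument using the fact that any sequence $(f_{n})$ in $\mathrm{Homeo}_{0}(\mathbb{S}^{1})$ lies in a finitely generated subgroup with $l_{S}(f_{n})=O(\log n)$), then lifts $\varphi|_{G_{\theta_{0}}}$ to the band $[r_{1},r_{2}]\times\mathbb{R}$ with a bounded orbit, and extracts the invariant arc as the set of principal points of prime ends of a suitable invariant open set, using the Section~3 classification on the circle of prime ends. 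Without the boundedness lemma and the prime-end compactification there is no candidate for the arc, so the ``delicate analysis'' you defer is not filled in by the one-dimensional results alone. Second, your gluing step silently assumes the resulting bijection is continuous at points of $K\times\mathbb{S}^{1}$ that are accumulated by infinitely many complementary annuli; this fails for an arbitrary choice of signs, which is why the paper must choose $\lambda_{0}$ on each gap according to which of the two invariant arcs has smaller diameter and then prove that the diameters of the selected curves tend to $0$.

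Third, the torus case: you write that ``one observes that $\Lambda\neq\emptyset$'', but on the torus there is no boundary circle on which to apply Matsumoto's theorem, so the non-emptiness of $F_{x}$ is precisely the hard statement (Theorem~\ref{fixedpoint} of the paper). Its proof requires lifting $\varphi|_{G_{x}}$ to $\mathrm{Homeo}_{\mathbb{Z}^{2}}(\mathbb{R}^{2})$ with a bounded orbit, which is obtained from the vanishing of $H^{2}_{b}(\mathrm{Homeo}_{c}(\mathbb{R}),\mathbb{Z}^{2})$ applied to the bounded $2$-cocycle measuring the failure of the pointed lifts to form a morphism, followed again by a prime-end argument. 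A minor additional inaccuracy: $F_{0}$ need not lie in the fibre $\{\theta=0\}$; one can only show it is a graph $r\mapsto(r,\eta(r))$ over $K$, meeting each invariant circle once, and one straightens it by a further conjugacy $(r,\theta)\mapsto(r,\hat{\eta}(r)+\theta)$.
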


In particular, any action of the group $\mathrm{Homeo}_{0}(\mathbb{S}^{1})$ on the torus admits an invariant circle. By analogy with this theorem, one can be tempted to formulate the following conjecture:

\begin{conjecture} \label{cerclesursphere}
Any non-trivial action of the group $\mathrm{Homeo}_{0}(\mathbb{S}^{1})$ on the sphere (respectively on the closed disc)  is conjugate to one of the actions $\varphi_{K, \lambda}^{\mathbb{S}^{2}}$ (respectively to one of the actions $\varphi_{K, \lambda}^{\mathbb{D}^{2}}$).
\end{conjecture}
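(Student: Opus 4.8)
The natural line of attack is to reduce both statements to the classification on the closed annulus provided by Theorem~\ref{cerclesursurfaces}, the only genuinely new feature being the points that play the role of the collapsed boundary circles. I would begin by locating these points. Restricting a non-trivial action $\varphi$ to the rotation subgroup $R \cong \mathrm{SO}(2) \subset \mathrm{Homeo}_{0}(\mathbb{S}^{1})$, continuity together with the compactness of $R$ shows that $\varphi(R)$ is a compact connected abelian subgroup of $\mathrm{Homeo}_{0}(\mathbb{S}^{2})$ (resp. of $\mathrm{Homeo}_{0}(\mathbb{D}^{2})$), and it is non-trivial since $\varphi$ is injective. By Kerekjarto's theorem (\cite{Kol}), after an initial conjugacy $\varphi(R)$ is the standard group of rotations: on the sphere it fixes exactly two poles $N$ and $S$, and on the disc it fixes the center $O$ while acting as the standard rotation on the boundary circle. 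These points are the candidates for the images of the collapsed circles.

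The crucial step is then to promote $N, S$ (resp. $O$) to global fixed points of the whole group. Here I would exploit that the centralizer in $\mathrm{Homeo}_{0}(\mathbb{S}^{1})$ of an irrational rotation $\rho$ is exactly $R$, so that $N$ and $S$ are the only common fixed points of $\varphi(\rho)$, and combine this with the analysis of the stabilizer subgroups $G_{\theta_{0}}$ of homeomorphisms fixing a neighbourhood of a point: each $G_{\theta_{0}}$ is isomorphic to the group of compactly-supported homeomorphisms of the real line, so the description of its continuous actions obtained in the third section pins down the local behaviour and should force the poles to be fixed by every $\varphi(f)$. A counting remark confirms the picture, and distinguishes the two topological types at this stage: in the model $\varphi_{K,\lambda}^{\mathbb{S}^{2}}$ the global fixed-point set is exactly $\{N,S\}$, whereas in $\varphi_{K,\lambda}^{\mathbb{D}^{2}}$ it is the single interior point $O$.

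Once the poles are known to be globally fixed, I would remove them and work on the resulting open annulus $\mathbb{S}^{2} \setminus \{N,S\}$ (resp. on $\mathbb{D}^{2}$ minus its center and boundary). To apply Theorem~\ref{cerclesursurfaces} one must extend the action to a closed annulus, that is, add an ideal boundary circle at each end. The extension is governed by the induced action on the circle of prime ends at each fixed point; since this is an action of $\mathrm{Homeo}_{0}(\mathbb{S}^{1})$ on a circle, the solution of the circle case (\cite{Mat}) shows it is either trivial or the standard action, and the rotation normalization of the first step rules out the trivial case. This yields a genuine action on the closed annulus, which by Theorem~\ref{cerclesursurfaces} is conjugate to some $\varphi_{K,\lambda}$; collapsing the two ideal circles back to points (resp. the one interior circle, the genuine boundary circle carrying the standard action already forced by \cite{Mat}) identifies $\varphi$ with $\varphi_{K,\lambda}^{\mathbb{S}^{2}}$ (resp. $\varphi_{K,\lambda}^{\mathbb{D}^{2}}$).

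The main obstacle, and the reason the statement is only a conjecture, lies in the two steps that have no analogue in the annulus setting: showing that the poles are honest global fixed points, and controlling the dynamics in an arbitrarily small punctured neighbourhood of such a point well enough to extend the action continuously to an ideal boundary circle. A priori the orbits could spiral into a pole in a wild, non-collapsible manner, and the prime-end boundary need not carry the simple circle action one hopes for. Ruling out such pathologies — presumably by propagating the rigidity of the compactly-supported subgroups $G_{\theta_{0}}$ all the way to the fixed points, so that an invariant lamination by circles degenerating precisely at the poles is recovered — is where the real work would be.
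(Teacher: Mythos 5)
First, a point of orientation: the statement you were given is not proved in the paper at all --- it is stated as Conjecture \ref{cerclesursphere} and explicitly left open, so there is no proof of it to compare yours against, and your proposal (as you yourself concede in the final paragraph) is a program rather than a proof. What the paper does establish is exactly your first step: Proposition \ref{Mann} shows that the action has precisely two global fixed points on the sphere (one on the closed disc), starting as you do from Kerekjarto's theorem \cite{Kol} applied to the image of the rotation subgroup. But your mechanism for promoting the poles $N,S$ to global fixed points does not work as stated: the centralizer in $\mathrm{Homeo}_{0}(\mathbb{S}^{1})$ of an irrational rotation is just the rotation group $R$ itself, so invariance of $\left\{N,S\right\}$ under the image of that centralizer gives nothing beyond what you already had from $\varphi(R)$. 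The paper's trick is different and essential: every element of $\varphi(A)$, where $A$ is the set of homeomorphisms commuting with some nontrivial \emph{finite-order} rotation, preserves the fixed-point set $\left\{N,S\right\}$ of the image of that finite-order rotation; one then proves that $A$ generates $\mathrm{Homeo}_{0}(\mathbb{S}^{1})$ (Lemma \ref{generate}, via fragmentation into small-support commutators), and simplicity kills the induced morphism to $\mathbb{Z}/2\mathbb{Z}$.

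Your diagnosis of where the real difficulty lies is accurate and coincides with the paper's own closing discussion, but it should be made concrete. The whole annulus proof of Theorem \ref{cerclesursurfaces} hinges on Lemma \ref{domfondann}, the uniform bound on the diameter of $\widetilde{\varphi(f)}(D)$ for a fundamental domain $D$; this bound is what makes the lifted morphisms, the invariant continua, and the prime-end analysis function. The paper states that the analogue of this lemma is \emph{false} for actions on the open or half-open annulus (easy counterexamples exist), which is precisely the ``wild spiraling into a pole'' you worry about. Consequently your pivotal step --- extending the action on $\mathbb{S}^{2}\setminus\left\{N,S\right\}$ to a closed annulus by adjoining an ideal prime-end circle at each puncture and then quoting \cite{Mat} --- is circular: a homeomorphism of the open annulus extends canonically to the two-point end compactification but not to an ideal boundary circle, and getting a well-defined continuous action of $\mathrm{Homeo}_{0}(\mathbb{S}^{1})$ on such a circle requires exactly the displacement control that fails. (The paper notes that one can still produce an invariant lamination by circles; the unresolved case is when this lamination fails to accumulate on a global fixed point, forcing one to analyze actions on the open or half-open annulus where the groups $G_{\theta}$ have no interior fixed points.) So your outline is the natural program, and matches the author's, but both the fixed-point globalization as you sketch it and, more seriously, the end-compactification step are genuine gaps --- the latter being the very obstruction the author identifies as the open problem.
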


Notice that this theorem does not give directly a description of the conjugacy classes of such actions as two actions $\varphi_{K,\lambda}$ and $\varphi_{K',\lambda'}$ may be conjugate even though $K \neq K'$ or $\lambda \neq \lambda'$.
Now, let $K \subset [0,1]$ and $K' \subset [0,1]$ be two compact sets which contain the finite set $\left\{0,1\right\}$. Let $\lambda: [0,1]-K \rightarrow \left\{-1,+1 \right\}$ and $\lambda': [0,1]-K' \rightarrow \left\{-1,+1 \right\}$ be two maps which are constant on each connected component of their domain of definition. The following theorem characterizes when the actions $\varphi_{K,\lambda}$ and $\varphi_{K',\lambda'}$ are conjugate.

\begin{proposition}
The following statements are equivalent:
\begin{itemize}
\item the actions $\varphi_{K,\lambda}$ and $\varphi_{K',\lambda'}$ are conjugate;
\item either there exists an increasing homeomorphism $h:[0,1] \rightarrow [0,1]$ such that $h(K)=K'$ and such that $\lambda'\circ h=\lambda$ except on a finite number of connected component of $[0,1]-K$ or there exists a decreasing homeomorphism $h:[0,1] \rightarrow [0,1]$ such that $h(K)=K'$ and such that $\lambda'\circ h=-\lambda$ except on a finite number of connected component of $[0,1]-K$.
\end{itemize}
\end{proposition}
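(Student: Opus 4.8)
The plan is to prove the two implications separately: the reverse implication is constructive, and the direct implication proceeds by isolating canonical invariants of the action and then controlling one twisting quantity.

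For the reverse implication I would assemble the conjugating homeomorphism of $\mathbb{A}$ from three pieces, each justified by a fact already recorded above. First, an order isomorphism (resp.\ anti-isomorphism) between the compact sets $K$ and $K'$ extends to an increasing (resp.\ decreasing) homeomorphism $g$ of $[0,1]$ sending each complementary interval of $K$ \emph{affinely} onto the corresponding complementary interval of $K'$; the fibered map $(r,\theta)\mapsto(g(r),\theta)$ then conjugates $\varphi_{K,\lambda}$ to $\varphi_{K',\lambda''}$ for the sign function $\lambda''$ obtained by transporting $\lambda$ through $h$, since an affine rescaling of the $r$-coordinate preserves the model $a_{\pm}$ on each interval. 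Second, on a single complementary interval $(s_1,s_2)$ the renormalized twist $(r,\theta)\mapsto(r,\theta+(r-s_1)/(s_2-s_1))$ is the identity on the two bounding circles (the $\theta$-displacement is $0$ and $1\equiv 0$ there), so it extends by the identity to a homeomorphism of $\mathbb{A}$, and by the stated conjugacy between $a_-$ and $a_+$ it flips the sign on that interval and nowhere else. As $\lambda''$ and $\lambda'$ differ on only finitely many intervals, the corresponding finitely many twists have pairwise disjoint supports and compose into one homeomorphism carrying $\lambda''$ to $\lambda'$; here finiteness is essential, since accumulating twists would make a full turn on arbitrarily small $r$-ranges and destroy continuity. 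Finally, the flip $(r,\theta)\mapsto(1-r,\theta)$ realizes the decreasing case: it reverses the order of the circles $\{r\}\times\mathbb{S}^1$ and, being orientation-reversing, interchanges $a_-$ and $a_+$, which accounts for the change $\lambda'\circ h=-\lambda$. Composing the relevant pieces gives the conjugacy.

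For the direct implication, suppose $H$ conjugates $\varphi_{K,\lambda}$ to $\varphi_{K',\lambda'}$. The first step recovers $h$. A direct computation shows that $\varphi_{K,\lambda}(R_\alpha)$ is the rigid rotation $(r,\theta)\mapsto(r,\theta+\alpha)$ for every rotation $R_\alpha$, independently of $(K,\lambda)$, because each renormalized $a_{\pm}(R_\alpha)$ is itself a rotation. Hence any essential invariant circle of the action is invariant under all rigid rotations, so it is a union of circles $\{r\}\times\mathbb{S}^1$, and being a single embedded circle it equals one such $\{r\}\times\mathbb{S}^1$; full invariance then forces $r\in K$, because for $r$ in a complementary interval the model $a_{\pm}(f)$ moves the $r$-coordinate. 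Thus the essential invariant circles are exactly the $\{r\}\times\mathbb{S}^1$ with $r\in K$, linearly ordered by $r$, and $H$ maps this family order-preservingly or order-reversingly onto the analogous family for $(K',\lambda')$, inducing an (anti-)isomorphism $K\to K'$ that extends to a monotone homeomorphism $h$ with $h(K)=K'$. Next, on each circle $\{r\}\times\mathbb{S}^1$ with $r\in K$ both actions restrict to the standard action of $\mathrm{Homeo}_0(\mathbb{S}^1)$ on itself, so $H$ restricts to a homeomorphism commuting with every element of $\mathrm{Homeo}_0(\mathbb{S}^1)$; as the centralizer of this group in $\mathrm{Homeo}(\mathbb{S}^1)$ is trivial, we get $H(r,\theta)=(h(r),\theta)$ for all $r\in K$. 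In particular $h$ is increasing or decreasing according to whether $H$ fixes or exchanges the boundary circles, the decreasing case being precisely where the reversal of the $r$-direction interchanges $a_-$ and $a_+$ and yields the target sign $-\lambda$.

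The last step compares the signs, and this is where I expect the main difficulty. Lift $H$ to $\tilde H(r,\tilde\theta)=(h(r),\Theta(r,\tilde\theta))$ on $[0,1]\times\mathbb{R}$ and set $\mu(r)=\Theta(r,\tilde\theta_0)-\tilde\theta_0$; by the previous step $\mu$ is continuous and integer-valued on $K$, hence locally constant on $K$. Over a complementary interval $I=(r_1,r_2)$ the map $H$ conjugates a renormalized $a_{\lambda(I)}$ to a renormalized $a_{\lambda'(h(I))}$ while fixing both bounding circles pointwise, so it has a well-defined net twisting equal to the jump $\mu(r_2)-\mu(r_1)$. Since $H$ is the identity on the $K$-circles it conjugates $\varphi_{K,\lambda}(G_{\theta_0})$ to $\varphi_{K',\lambda'}(G_{\theta_0})$ for the same $\theta_0$, so it carries the pair of $G_{\theta_0}$-invariant arcs of the source model (the vertical arc and the arc winding once, described above) onto the pair of invariant arcs of the target model; measuring the change of winding number of each arc then pins the net twisting to be $0$ when $\lambda(I)=\lambda'(h(I))$ and nonzero otherwise. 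Because $\mu$ is locally constant on $K$, its jump across every sufficiently small complementary interval vanishes, so the signs can disagree on only finitely many intervals — exactly the asserted condition, with the global sign reversed in the decreasing case. The delicate points to get right are the rigidity forcing $H$ to be the identity on the $K$-circles and the bookkeeping that identifies the local twisting with the sign discrepancy via the winding numbers of the invariant arcs.
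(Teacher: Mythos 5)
Your argument is correct and shares its skeleton with the paper's proof. The constructive direction (affine rescaling of the $r$-coordinate via a piecewise-linear extension of $h$, a unit twist supported on each gap where the sign must be changed, the flip for the decreasing case, finiteness guaranteeing continuity) is essentially identical to the paper's. In the forward direction you recover, as the paper does, that a conjugacy commutes with the rigid rotations, is fibered over a monotone homeomorphism of $[0,1]$, and equals $(r,\theta)\mapsto(h(r),\theta)$ on $K\times\mathbb{S}^{1}$; the paper gets this last point from the fixed-point set $K\times\{\theta_{0}\}$ of $\varphi_{K,\lambda}(G_{\theta_{0}})$ plus rotation-equivariance, you from triviality of the centralizer of $\mathrm{Homeo}_{0}(\mathbb{S}^{1})$ --- both work. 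Where you genuinely diverge is the finiteness step. The paper argues by contradiction: infinitely many sign discrepancies accumulating at $r_{\infty}$ force the image of the radial arc $\{(r,\theta_{0})\}$ to pick up a full turn in each of infinitely many gaps, hence to accumulate on the whole circle $\{h(r_{\infty})\}\times\mathbb{S}^{1}$, which is impossible for a homeomorphism. You instead introduce the angular displacement $\mu$ of a lift, note that it is continuous and integer-valued, hence locally constant, on $K$, identify its jump across a gap with the sign discrepancy, and conclude by compactness; this is a clean, more quantitative repackaging of the same winding phenomenon. The one step you leave under-specified --- and rightly flag --- is why agreement of signs forces the vertical invariant arc to map to the vertical one rather than to the winding one, since a priori only the \emph{pair} of invariant arcs is preserved. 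The paper settles this dynamically, comparing the sign of $p_{1}\circ\varphi(f)-p_{1}$ along the two candidate arcs for an $f\in G_{\theta_{0}}$ with $f\geq\mathrm{id}$. Your winding-number bookkeeping can also be completed: in the increasing case $H$ preserves each boundary circle of the closed sub-annulus over a gap and acts there as the identity in $\theta$, hence acts trivially on $H_{1}$, so the relative winding $\pm 1$ of the two invariant arcs is preserved, and this pins down which arc goes where. Either completion is needed to make the final step airtight.
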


\begin{proof}
Let us begin by proving that the second statement implies the first one. 

If there exists an increasing homeomorphism $h$ which sends the compact subset $K \subset [0,1]$ to the compact subset $K' \subset [0,1]$, then the actions $\varphi_{K,\lambda}$ and $\varphi_{K', \lambda \circ h^{-1}}$ are conjugate. Indeed, denote by $\hat{h}$ the homeomorphism $[0,1] \rightarrow [0,1]$ which coincides with $h$ on $K$ and which, on each connected component $(r_{1},r_{2})$ of the complement of $K$, is the unique increasing linear homeomorphism $(r_{1},r_{2}) \rightarrow h((r_{1},r_{2}))$. Then the actions $\varphi_{K,\lambda}$ and $\varphi_{K', \lambda \circ h^{-1}}$ are conjugate via the homeomorphism
$$\begin{array}{rcl}
\mathbb{A} & \rightarrow & \mathbb{A} \\
(r, \theta) & \mapsto & (\hat{h}(r), \theta)
\end{array}
.$$
Similarly, suppose that $h$ is an orientation-reversing homeomorphism of the compact interval $[0,1]$ which sends the compact subset $K \subset [0,1]$ to the compact subset $K' \subset [0,1]$. As above, we denote by $\hat{h}$ the homeomorphism obtained from $h$ by sending linearly each connected component of the complement of $K$ to each connected component of the complement of $K'$. Then the actions $\varphi_{K,\lambda}$ and $\varphi_{K',-\lambda \circ h^{-1}}$ are conjugate via the homeomorphism
$$\begin{array}{rcl}
\mathbb{A} & \rightarrow & \mathbb{A} \\
(r, \theta) & \mapsto & (\hat{h}(r), \theta)
\end{array}
.$$

It suffices then to use the following lemma to complete the proof of the converse in the proposition.

\begin{lemma}
Let $K \subset [0,1]$ be a compact subset which contains the points $0$ and $1$. If $\lambda: [0,1]-K \rightarrow \left\{-1,+1 \right\}$ and $\lambda': [0,1]-K \rightarrow \left\{-1,+1 \right\}$ are continuous maps which are equal except on one connected component $(r_{1},r_{2})$ of the complement of $K$ where they differ, then the actions $\varphi_{K,\lambda}$ and $\varphi_{K,\lambda'}$ are conjugate. 
\end{lemma}

\begin{proof}
This lemma comes from the fact that the actions $a_{+}$ and $a_{-}$ are conjugate via the homeomorphism of the annulus $(r, \theta) \mapsto (r, \theta+r)$. More precisely, suppose that $\lambda((r_{1},r_{2}))=\left\{+1 \right\}$. Then the actions $\varphi_{K,\lambda}$ and $\varphi_{K,\lambda'}$ are conjugate via the homeomorphism $h$ defined as follows. The homeomorphism $h$ is equal to the identity on the Cartesian product of the complement of $(r_{1},r_{2})$ with the circle and is equal to $(r, \theta) \mapsto (r, \theta+ \frac{r-r_{1}}{r_{2}-r_{1}})$ on the open set $(r_{1},r_{2}) \times \mathbb{S}^{1}$.
\end{proof}

Let us establish now the other implication. Suppose that there exists a homeomorphism $g$ which conjugates the actions $\varphi_{K,\lambda}$ and $\varphi_{K',\lambda'}$. Now, for any angle $\alpha$, if we denote by $R_{\alpha}$ the rotation of angle $\alpha$:
$$ g \circ \varphi_{K,\lambda}(R_{\alpha})=\varphi_{K',\lambda'}(R_{\alpha})\circ g.$$
The homeomorphism $\varphi_{K,\lambda}(R_{\alpha})$ is the rotation of angle $\alpha$ of the annulus.
This means that, for any point $(r, \theta)$ of the closed annulus and any angle $\alpha \in \mathbb{S}^{1}$:
$$g((r, \theta+ \alpha))=g((r,\theta))+(0,\alpha).$$
In particular, the homeomorphism $g$ permutes the leaves of the foliation of the annulus by circles whose leaves are of the form $\left\{ r \right\} \times \mathbb{S}^{1}$.
Fix now a point $\theta_{0}$ on the circle. The homeomorphism $g$ sends the set $K \times \left\{ \theta_{0} \right\}$ of fixed points of $\varphi_{K,\lambda}(G_{\theta_{0}})$ (the points which are fixed by every element of this group) to the set $K' \times \left\{ \theta_{0} \right\}$ of fixed points of $\varphi_{K',\lambda'}(G_{\theta_{0}})$. From this and what is above, we deduce that for any $r \in K$ and any angle $\theta$, $g(r, \theta)=(h(r), \theta)$, where $h:K \rightarrow K'$ is a homeomorphism. Moreover, if the homeomorphism $g$ is orientation-preserving, then the homeomorphism $h$ is increasing and, if the homeomorphism $g$ is orientation-reversing, then the homeomorphism $h$ is decreasing. We can extend the homeomorphism $h$ to a homeomorphism of $[0,1]$ which sends $K$ to $K'$.
Notice that, for a connected component $(r_{1},r_{2})$ of the complement of $K$ in $[0,1]$, the homeomorphism $g$ sends the open set $(r_{1},r_{2})\times \mathbb{S}^{1}$ onto the open set $(r'_{1},r'_{2})\times \mathbb{S}^{1}$, where $(r'_{1},r'_{2})=h((r_{1},r_{2}))$ is a connected component of the complement of $K'$ in $[0,1]$.

It suffices now to establish that the condition on the maps $\lambda$ and $\lambda'$ is satisfied for the homeomorphism $h$. Suppose, to simplify the proof, that the homeomorphism $g$ is orientation-preserving and hence the homeomorphism $h$ is increasing: the case where the homeomorphism $g$ is orientation reversing can be treated similarly. Suppose by contradiction that there exists a sequence $((r_{1,n},r_{2,n}))_{n \in \mathbb{N}}$ of connected components of the open set $[0,1]-K$ such that:
\begin{itemize}
\item $\lambda((r_{1,n},r_{2,n}))=+1$;
\item $\lambda' ((r'_{1,n},r'_{2,n}))=-1$, where $(r'_{1,n},r'_{2,n})=h((r_{1,n},r_{2,n}))$;
\item the sequence $(r_{1,n})_{n \in \mathbb{N}}$ is monotonous and converges to a real number $r_{\infty}$.
\end{itemize}
We will prove then that either the curve $\left\{(r,\theta_{0}), r > r_{\infty} \right\}$ (if the sequence $(r_{1,n})_{n \in \mathbb{N}}$  is decreasing) or the curve $\left\{(r,\theta_{0}), r < r_{\infty} \right\}$ (if the sequence $(r_{1,n})_{n \in \mathbb{N}}$  is increasing) is sent by the homeomorphism $g$ to a curve which accumulates on $\left\{ r_{\infty} \right\} \times \mathbb{S}^{1}$, which is not possible. The hypothesis $\lambda((r_{1,n},r_{2,n}))=-1$ and $\lambda((r'_{1,n},r'_{2,n}))=+1$ for any $n$ would lead to the same contradiction.

To achieve this, it suffices to prove that, for any positive integer $n$, the homeomorphism $g$ sends the curve $\left\{ (r, \theta_{0}), r_{1,n}<r<r_{2,n} \right\}$ onto the curve $\left\{ (r, \theta_{0}+\frac{r-r'_{1,n}}{r'_{2,n}-r'_{1,n}}), r'_{1,n}<r<r'_{2,n} \right\}$. Fix a positive integer $n$. Observe that the restriction of the group of homeomorphisms $\varphi_{K,\lambda}(G_{\theta_{0}})$ to $(r_{1,n},r_{2,n}) \times \mathbb{S}^{1}$ has two invariant simple curves $\left\{ (r, \theta_{0}), r_{1,n}<r<r_{2,n} \right\}$ and $\left\{ (r, \theta_{0}-\frac{r-r_{1,n}}{r_{2,n}-r_{1,n}}), r_{1,n}<r<r_{2,n} \right\}$ on which the action is transitive. The action of this group is transitive on the two connected components of the complement of these two curves, which are open sets. Likewise, the restriction of the group of homeomorphisms $\varphi_{K',\lambda'}(G_{\theta_{0}})$ to $(r'_{1,n},r'_{2,n})$ has two invariant simple curves $\left\{ (r, \theta_{0}), r'_{1,n}<r<r'_{2,n} \right\}$ and $\left\{ (r, \theta_{0}+\frac{r-r'_{1,n}}{r'_{2,n}-r'_{1,n}}), r'_{1,n}<r<r'_{2,n} \right\}$ on which the action is transitive and the action of this group is transitive on the two connected components of the complement of these two curves, which are open sets. Therefore, the homeomorphism $g$ sends the curve $\left\{ (r, \theta_{0}), r_{1,n}<r<r_{2,n} \right\}$ to one of the curves $\left\{ (r, \theta_{0}), r'_{1,n}<r<r'_{2,n} \right\}$ or $\left\{ (r, \theta_{0}+\frac{r-r'_{1,n}}{r'_{2,n}-r'_{1,n}}), r'_{1,n}<r<r'_{2,n} \right\}$. Let us find now which of them is the image of the curve $\left\{ (r, \theta_{0}), r_{1,n}<r<r_{2,n} \right\}$. We fix the orientation of the circle induced by the orientation of the real line $\mathbb{R}$ and the covering map $\mathbb{R} \rightarrow \mathbb{S}^{1}=\mathbb{R}/\mathbb{Z}$. This orientation gives rise to an order on $\mathbb{S}^{1}-\left\{\theta_{0} \right\}$. Take a homeomorphism $f$ in $G_{\theta_{0}}$ different from the identity such that, for any $x \neq \theta_{0}$, $f(x) \geq x$. Then, for any $r \in (r_{1,n},r_{2,n})$, $p_{1} \circ \varphi_{K,\lambda}(f)(r,\theta_{0}) \geq r$, where $p_{1}: \mathbb{A}=[0,1] \times \mathbb{S}^{1} \rightarrow [0,1]$ is the projection, and the restriction of the homeomorphism $\varphi_{K,\lambda}(f)$ to the curve $\left\{ (r, \theta_{0}), r_{1,n}<r<r_{2,n} \right\}$ is different from the identity. Likewise, for any $r \in (r'_{1,n},r'_{2,n})$, $p_{1} \circ \varphi_{K',\lambda'}(f)(r,\theta_{0}) \leq r$ and $p_{1} \circ \varphi_{K',\lambda'}(f)(r,\theta_{0}+\frac{r-r'_{1,n}}{r'_{2,n}-r'_{1,n}}) \geq r$ and the restrictions of the homeomorphism $\varphi_{K',\lambda'}(f)$ to $\left\{ (r, \theta_{0}), r'_{1,n}<r<r'_{2,n} \right\}$ and to $\left\{ (r, \theta_{0}+\frac{r-r'_{1,n}}{r'_{2,n}-r'_{1,n}}), r'_{1,n}<r<r'_{2,n} \right\}$ are different from the identity. Moreover, the map
$$\begin{array}{rcl}
(r_{1,n},r_{2,n}) & \rightarrow & (r'_{1,n},r'_{2,n})\\
r & \mapsto & p_{1} \circ g(r, \theta_{0})
\end{array}$$
is strictly increasing as the homeomorphism $g$ was supposed to be orientation-preserving. This implies what we wanted to prove.
\end{proof}

\section{Continuous actions of $\mathrm{Homeo}_{c}(\mathbb{R})$ on the line}

Let us denote by $\mathrm{Homeo}_{c}(\mathbb{R})$ the group of compactly supported homeomorphisms of the real line $\mathbb{R}$. In this section, a proof of the following theorem is provided.

\begin{theorem} \label{actionsdersurr}
Let $\psi: \mathrm{Homeo}_{c}(\mathbb{R}) \rightarrow \mathrm{Homeo}(\mathbb{R})$ be a continuous group morphism whose image has no fixed point. Then there exists a homeomorphism $h$ of $\mathbb{R}$ such that, for any compactly supported homeomorphism $f$ of the real line:
$$\psi(f)=h \circ f \circ h^{-1}.$$
\end{theorem}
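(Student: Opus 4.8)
The plan is to reconstruct each point of the line algebraically from the subgroup structure of $\mathrm{Homeo}_{c}(\mathbb{R})$, in the spirit of Whittaker's argument but adapted to a mere morphism, and then to verify that the resulting point map is the sought conjugacy. For an open subset $U \subseteq \mathbb{R}$ write $G_{U}=\mathrm{Homeo}_{c}(U)$ for the subgroup of homeomorphisms supported in a compact subset of $U$, and for $x \in \mathbb{R}$ set $L_{x}=G_{(-\infty,x)}$ and $R_{x}=G_{(x,+\infty)}$. These two subgroups commute, each is isomorphic to $\mathrm{Homeo}_{c}(\mathbb{R})$, and a direct check gives $\mathrm{Fix}(L_{x})=[x,+\infty)$ and $\mathrm{Fix}(R_{x})=(-\infty,x]$, so that $\{x\}$ is the unique common fixed point of $\langle L_{x},R_{x}\rangle$. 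Moreover the assignment is equivariant: since every $f$ is increasing, $f(-\infty,x)=(-\infty,f(x))$ and hence $f L_{x} f^{-1}=L_{f(x)}$ and $f R_{x} f^{-1}=R_{f(x)}$. As $\mathrm{Homeo}_{c}(\mathbb{R})$ is simple and the image of $\psi$ is nontrivial, $\psi$ is injective; this is reassuring, although what is really used below is only the behaviour of fixed-point sets in the target.

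The core step, and the point I expect to be the main obstacle, is the following local statement: for every $x$ the set $\mathrm{Fix}(\psi(R_{x}))$ is a closed half-line and $\mathrm{Fix}(\psi(L_{x}))$ is the complementary closed half-line, so that their intersection is a single point. Equivalently, writing $\mathrm{Supp}(\Gamma)=\mathbb{R}\setminus\bigcap_{\phi\in\Gamma}\mathrm{Fix}(\phi)$ for the support of a subgroup $\Gamma$, I must show that $\mathrm{Supp}(\psi(G_{I}))$ is a single open interval for every bounded open interval $I$, that disjoint intervals yield disjoint supports (which is forced by the commutation $[G_{I},G_{J}]=1$ whenever $I\cap J=\varnothing$), and that the restricted action of $G_{I}\cong\mathrm{Homeo}_{c}(\mathbb{R})$ on its support is again conjugate to the standard one. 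Ruling out a disconnected support or spurious fixed points for $\psi(G_{I})$ is the delicate part: this is where the continuity of $\psi$ (Rosendal–Solecki) and a fragmentation argument over an adjacent/nested family of intervals must be brought in, presumably together with an inductive control of the combinatorics of such intervals. The hypothesis that the image of $\psi$ has no global fixed point enters here only to guarantee $\bigcap_{\phi}\mathrm{Fix}(\phi)=\varnothing$, hence that the supports $\mathrm{Supp}(\psi(G_{(-n,n)}))$ increase to all of the target line.

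Granting the local statement, I would define $h(x)$ to be the unique common fixed point of $\psi(L_{x})$ and $\psi(R_{x})$. Monotonicity of $h$ follows from the inclusions $R_{x'}\subseteq R_{x}$ and $L_{x}\subseteq L_{x'}$ for $x<x'$, which make the half-lines $\mathrm{Fix}(\psi(R_{x}))$ nest monotonically; strictness of this nesting (from the supports being genuine, moving intervals) gives injectivity, and surjectivity follows from the exhaustion above. A monotone bijection of $\mathbb{R}$ onto $\mathbb{R}$ is automatically continuous, so $h$ is a homeomorphism. Finally, the conjugacy relation is immediate from equivariance: applying $\psi$ to $f L_{x} f^{-1}=L_{f(x)}$ and $f R_{x} f^{-1}=R_{f(x)}$ shows that conjugation by $\psi(f)$ carries the common fixed point $h(x)$ of $(\psi(L_{x}),\psi(R_{x}))$ to the common fixed point $h(f(x))$ of $(\psi(L_{f(x)}),\psi(R_{f(x)}))$. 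Hence $\psi(f)(h(x))=h(f(x))$ for all $x$, that is, $\psi(f)=h\circ f\circ h^{-1}$, as desired.
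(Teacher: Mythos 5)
Your overall architecture coincides with the paper's: both reconstruct each point $x$ as the fixed-point set of $\psi$ applied to the stabilizer-type subgroup $G_x=L_x\times R_x$ of homeomorphisms fixing a neighbourhood of $x$, define $h(x)$ to be that point, and conclude by equivariance. The problem is that you have only written down the frame: the entire mathematical content of the theorem is the "local statement" that you explicitly defer (``the main obstacle'', ``the delicate part \dots must be brought in, presumably together with an inductive control''). Nothing in your text establishes that $\mathrm{Fix}(\psi(G_I))$ is non-empty for a bounded interval $I$, that it is bounded, or that $\mathrm{Fix}(\psi(L_x))\cap\mathrm{Fix}(\psi(R_x))$ is a single point rather than empty or a nondegenerate set. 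These are exactly the five lemmas that occupy the paper's proof. For non-emptiness the paper takes a one-parameter flow supported in $I$, shows its $\psi$-image must have a fixed point (otherwise one manufactures a nontrivial morphism $G_I\to\mathbb{R}$, impossible since $G_I$ is perfect, forcing $\psi$ to kill $G_I$ and contradicting injectivity), and then extracts fixed points of $\psi(G_I)$ from the endpoints of components of that fixed-point set; this is where continuity is genuinely used. Boundedness and the single-point property both rest on a fragmentation lemma ($g=g_1g_2g_3$ with $g_1,g_3\in G_I$, $g_2\in G_J$ for disjoint $I,J$), which traps every $\psi$-orbit below a putative second fixed point and produces a global fixed point of the image, contradicting the hypothesis. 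Separation of the sets $F_x$ and the fact that they are singletons also use a countability argument (one cannot have uncountably many pairwise disjoint open intervals in $\mathbb{R}$). None of this is recoverable from what you wrote.

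One concrete error along the way: you assert that disjointness of the supports of $\psi(G_I)$ and $\psi(G_J)$ for disjoint $I,J$ ``is forced by the commutation $[G_I,G_J]=1$''. It is not: commuting subgroups of $\mathrm{Homeo}(\mathbb{R})$ can have equal full support (any two subgroups of the translation group, for instance). What commutation gives is only that each group preserves the fixed-point set of the other; disjointness of the $F_x$ in the paper is instead derived from the fragmentation lemma (the groups $G_x$ and $G_y$ generate all of $\mathrm{Homeo}_c(\mathbb{R})$, so a common fixed point would be a global fixed point). Also note that your claim that $\mathrm{Fix}(\psi(R_x))$ and $\mathrm{Fix}(\psi(L_x))$ are complementary closed half-lines is stronger than what the paper proves or needs; it is a consequence of the theorem, but attempting to prove it directly as a stepping stone would add difficulty rather than remove it.
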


\underline{Remark}: This theorem is true without continuity hypothesis. However, we just need the above theorem in this article.

\underline{Remark}: This theorem also holds in the case of groups of diffeomorphisms, \emph{i.e.} any continuous action of the group of compactly-supported $C^{r}$ diffeomorphisms on the real line is topologically conjugate to the inclusion. The proof in this case is the same.

This theorem enables us to describe any continuous action of the group $\mathrm{Homeo}_{c}(\mathbb{R})$ on the real line as it suffices to consider the action on each connected component of the complement of the fixed point set of the action.

\begin{proof}
We fix a morphism $\psi$ as in the statement of the theorem. During this proof, for a subset $A \subset \mathbb{R}$, we will denote by $G_{A}$ the group of compactly supported homeomorphisms which pointwise fix a neighbourhood of $A$ and by $F_{A} \subset \mathbb{R}$ the closed set of fixed points of $\psi(G_{A})$ (the set of points which are fixed by every element of this group). Let us begin by giving a sketch of the proof of this theorem. We will first prove that, for a compact interval $I$ with non-empty interior, the set $F_{I}$ is compact and non-empty. As a consequence, for any real number $x$, the closed set $F_{x}$ is non-empty. Then we will prove that the sets $F_{x}$ are single-point sets. Let $\left\{ h(x) \right\} = F_{x}$. Then the map $h$ is a homeomorphism and satisfies the relation required by the theorem. Let us give details.

Notice that, as the group $\mathrm{Homeo}_{c}(\mathbb{R})$ is simple (see \cite{Fis}), the image of $\psi$ is contained in the group of increasing homeomorphisms of $\mathbb{R}$ (otherwise there would exist a nontrivial group morphism $\mathrm{Homeo}_{c}(\mathbb{R}) \rightarrow \mathbb{Z}/2\mathbb{Z}$) and the morphism $\psi$ is a one-to-one map.

\begin{lemma}
For a compact interval $I \subset \mathbb{R}$ with non-empty interior, the closed set $F_{I}$ is non-empty.
\end{lemma}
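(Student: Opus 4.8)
The plan is to produce the desired common fixed point as the image of an \emph{attracting} fixed point of a single, carefully chosen North--South element of $G_I$, using the continuity of $\psi$ as the only analytic input. First I would record the algebraic geometry of $G_I$. Writing $I=[a,b]$, every element of $G_I$ is compactly supported in the two rays $(-\infty,a-\varepsilon]\cup[b+\varepsilon,\infty)$ for some $\varepsilon>0$; in particular $G_I$ is the commuting product of the group of homeomorphisms supported in the left ray and the group of those supported in the right ray, each factor being isomorphic to $\mathrm{Homeo}_{c}(\mathbb{R})$. Inside $G_I$ I would fix a North--South homeomorphism $\phi$ supported in a compact interval $(s,t)$ with $b<s<t$ and with $\phi(x)<x$ on $(s,t)$, so that $s$ is attracting and $t$ repelling. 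Since $\psi$ is one-to-one (as noted above) and its image consists of increasing homeomorphisms, $P:=\psi(\phi)$ is a nontrivial increasing homeomorphism of $\mathbb{R}$.

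The engine of the argument is an elementary observation. If $g\in G_I$ has support contained in a compact subset of $(s,t)$, then the supports of the conjugates $\phi^{n}g\phi^{-n}$, which are the intervals $\phi^{n}(\mathrm{supp}\,g)$, shrink to the single point $s$, so that $\phi^{n}g\phi^{-n}\to\mathrm{id}$ \emph{uniformly}. By continuity of $\psi$ this yields $P^{n}\psi(g)P^{-n}\to\mathrm{id}$ uniformly on compact sets. Suppose that $P$ has an attracting fixed point $z$ whose immediate basin has a finite left endpoint $\alpha\in\mathrm{Fix}(P)$. I claim every such $\psi(g)$ fixes $\alpha$: otherwise, say $\psi(g)(\alpha)>\alpha$, and then for a point $w$ of the basin close to $z$ one has $P^{-n}(w)\to\alpha$, while $\psi(g)$ pushes a whole neighbourhood of $\alpha$ to the right of some $c\in(\alpha,z)$, so that $P^{n}\bigl(\psi(g)(P^{-n}(w))\bigr)\geq P^{n}(c)\to z$; this contradicts $P^{n}\psi(g)P^{-n}(w)\to w\neq z$. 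Thus the repelling endpoints of the basins of $P$ are fixed by all these $\psi(g)$.

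To assemble a fixed point of all of $\psi(G_I)$ I would run this mechanism for a family of North--South elements of $G_I$ whose attracting endpoints decrease to $b$, and symmetrically on the left ray with attracting endpoints increasing to $a$. The fixed points produced in this way for the corresponding restricted subgroups should converge, as the supports are pushed towards $I$, to the two extreme points of the sought set $F_I$; the extraction of an actual element of $F_I$ is then carried out through the standard principle that a nonempty compact set invariant under a group of increasing homeomorphisms contains a global fixed point of that group (its infimum and supremum are fixed). The task therefore reduces to organizing these finitely-many-at-a-time fixed points into a nonempty compact $\psi(G_I)$--invariant set.

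The hard part is the structural hypothesis used silently above, namely that $P=\psi(\phi)$ \emph{has} an attracting fixed point with a finite repelling neighbour, rather than being topologically conjugate to a translation. Continuity alone does not force this, and this is exactly where the standing assumption that the image of $\psi$ has no global fixed point must be invoked, in tandem with the commuting two--ray decomposition of $G_I$, to prevent the relevant orbits from escaping to infinity. I expect this non--escape step, and not the dynamical computation of the preceding paragraph, to be the main obstacle: it is the precise point at which the global absence of fixed points for the whole image is genuinely needed, and once the fixed-point structure of $P$ is secured the convergence argument closes the proof.
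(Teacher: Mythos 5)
Your dynamical ``engine'' (conjugating $g$ by powers of a North--South element $\phi$ so that $\phi^{n}g\phi^{-n}\to\mathrm{id}$, hence $P^{n}\psi(g)P^{-n}\to\mathrm{id}$, forcing $\psi(g)$ to fix the repelling endpoints of the basins of $P=\psi(\phi)$) is correct as far as it goes, but the proposal has two genuine gaps. The first you flag yourself: nothing shows that $P$ has any fixed point, let alone an attracting one whose immediate basin has a finite repelling endpoint; $P$ could a priori be conjugate to a translation. You hope to close this using the hypothesis that the image of $\psi$ has no global fixed point, but that is the wrong place to look: the paper proves this lemma without that hypothesis at all. Its device is to replace your single map $\phi$ (supported \emph{outside} $I$, inside $G_I$) by a one-parameter flow $(\varphi^{t})$ supported \emph{inside} $I$, which therefore commutes with all of $G_I$ simultaneously. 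If the image flow $(\psi(\varphi^{t}))$ had no fixed point, then for each $x_0$ the map $t\mapsto\psi(\varphi^{t})(x_0)$ would be a bijection of $\mathbb{R}$, and the resulting ``time function'' $T_{x_0}:G_I\to\mathbb{R}$, $\psi(f)(x_0)=\psi(\varphi^{T_{x_0}(f)})(x_0)$, is a group morphism by commutation; since $G_I\cong\mathrm{Homeo}_c(\mathbb{R})\times\mathrm{Homeo}_c(\mathbb{R})$ is perfect, $T_{x_0}\equiv 0$, so $\psi|_{G_I}$ is trivial, contradicting injectivity. This crucially uses the one-parameter group structure; a single North--South map does not admit such a translation-number argument, so your fixed-point-structure problem is not merely technical.

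The second gap is the ``assembly'' step. Your mechanism only produces, for each choice of $\phi$, a point fixed by $\psi$ of the subgroup of homeomorphisms supported in a compact subinterval of $(s,t)$ --- a much smaller group than $G_I$. Letting the supports exhaust the two rays gives a decreasing family of non-empty closed common-fixed-point sets, but a nested decreasing sequence of non-empty closed subsets of $\mathbb{R}$ can have empty intersection; you would need these sets to stay in a fixed compact region, and no such bound is available at this stage (the paper only proves compactness of $F_I$ in the \emph{next} lemma, and that proof already presupposes the non-emptiness established here, together with the no-fixed-point hypothesis and the fragmentation lemma). By contrast, the paper's flow commutes with \emph{all} of $G_I$ at once, so the whole fixed-point set $F$ of the image flow is $\psi(G_I)$-invariant, each of its connected components is invariant by path-connectedness of $G_I$ and continuity of $\psi$, and the finite endpoints of any such component (which exist because $F\neq\emptyset$ and $F\neq\mathbb{R}$ by injectivity) are fixed by all of $\psi(G_I)$ in one stroke --- no limiting process is needed.
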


\begin{proof}
Fix a compact interval $I$ with non-empty interior. Take a non-zero vector field $X: \mathbb{R} \rightarrow \mathbb{R}$ supported in $I$.  The flow of this vector field defines a morphism 
$$ \begin{array}{rcl}
\mathbb{R} & \rightarrow & \mathrm{Homeo}_{c}(\mathbb{R}) \\
t & \mapsto & \varphi^{t}
\end{array}
.$$
Let us assume for the moment that the set $F$ of fixed points of the subgroup $\left\{ \psi(\varphi^{t}), t \in \mathbb{R} \right\}$ of $\mathrm{Homeo}(\mathbb{R})$ is non-empty. Notice that this set is not $\mathbb{R}$ as the morphism $\psi$ is one-to-one. As each homeomorphism $\varphi^{t}$ commutes with any element in $G_{I}$, we obtain that, for any element $g$ in $\psi(G_{I})$, $g(F)=F$. Moreover, as any element of the group $G_{I}$ can be joined to the identity by a continuous path which is contained in $G_{I}$ and as the morphism $\psi$ is continuous, any connected component $C$ of $F$ is invariant under $\psi(G_{I})$. The upper bounds and the lower bounds of these intervals which lie in $\mathbb{R}$ are then fixed points of the group $\psi(G_{I})$. This proves the lemma.

It remains to prove that the set $F$ is non-empty. Suppose by contradiction that it is empty. Then, for any real number $x$, the map
$$\begin{array}{rcl}
\mathbb{R} & \rightarrow & \mathbb{R} \\
t & \mapsto & \psi(\varphi^{t})(x)
\end{array}$$
is a homeomorphism. Indeed, if it was not onto, the supremum or the infimum of the image would provide a fixed point for $(\psi(\varphi^{t}))_{t \in \mathbb{R}}$. If it was not one-to-one, there would exist $t_{0} \neq 0$ such that $\psi(\varphi^{t_{0}})(x)=x$. Then, for any positive integer $n$, $\psi(\varphi^{t_{0}/2^{n}})(x)=x$ and, by continuity of $\psi$, for any real number $t$, $\psi(\varphi^{t})(x)=x$ and the point $x$ would be fixed by $(\psi(\varphi^{t}))_{t \in \mathbb{R}}$.  

Fix a real number $x_{0}$. Let $T_{x_{0}}: G_{I} \rightarrow \mathbb{R}$ be the map defined by
$$ \psi(f)(x_{0})= \psi(\varphi^{T_{x_{0}}(f)})(x_{0}).$$
The map $T_{x_{0}}$ is a group morphism as, for any homeomorphisms $f$ and $g$ in $G_{I}$,
$$\begin{array}{rcl}
\psi(\varphi^{T_{x_{0}}(f g)})(x_{0}) & = & \psi(fg)(x_{0}) \\
 & = & \psi(f) \psi(\varphi^{T_{x_{0}}(g)})(x_{0}) \\
 & = & \psi(\varphi^{T_{x_{0}}(g)}) \psi(f)(x_{0}) \\
 & = &  \psi(\varphi^{T_{x_{0}}(g)+T_{x_{0}}(f)})(x_{0}).
\end{array}
$$
However, the group $G_{I}$, which is isomorphic to the group $\mathrm{Homeo}_{c}(\mathbb{R}) \times \mathrm{Homeo}_{c}(\mathbb{R})$, is a perfect group: any element of this group can be written as a product of commutators. Therefore, the morphism $T_{x_{0}}$ is trivial. As the point $x_{0}$ is any point in $\mathbb{R}$, we deduce that the restriction of $\psi$ to $G_{I}$ is trivial, which is not possible as the morphism $\psi$ is one-to-one.
\end{proof}

\underline{Remark}: if $\psi(\mathrm{Homeo}_{c}(\mathbb{R})) \subset \mathrm{Homeo}_{c}(\mathbb{R})$, this lemma can be proved without continuity hypothesis. Indeed, consider an element $f$ in $\mathrm{Homeo}_{c}(\mathbb{R})$ supported in $I$. One of the connected components of the set of fixed points of $\psi(f)$ is of the form $(-\infty, a]$ for some $a$ in $\mathbb{R}$. This interval is necessarily invariant by the group $\psi(G_{I})$ which commutes with $f$. Hence, the point $a$ is a fixed point for the group $\psi(G_{I})$.

During the proof, we will often use the following elementary result:
\begin{lemma} \label{fragstab}
Let $I$ and $J$ be disjoint compact non-empty intervals. For any homeomorphism $g$ in $\mathrm{Homeo}_{c}(\mathbb{R})$, there exist homeomorphisms $g_{1} \in G_{I}$, $g_{2} \in G_{J}$ and $g_{3} \in G_{I}$ such that:
$$g=g_{1}g_{2}g_{3}.$$
\end{lemma}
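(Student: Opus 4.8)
The plan is to reduce the statement to a single conjugation identity. Since $I$ and $J$ are disjoint compact intervals, I would first fix disjoint open neighbourhoods $I' \supset I$ and $J' \supset J$. The interval $J$ then lies entirely in one of the two components (rays) of $\mathbb{R} \setminus \overline{I'}$; say it lies in the right-hand ray $(\sup \overline{I'}, +\infty)$, the other case being symmetric. Finally I would fix $M>0$ large enough that $\mathrm{supp}(g) \cup \overline{J'} \subset [-M,M]$. The key feature I want to exploit is that an element of $G_I$ is completely free to act on the ray containing $J$, since the constraint ``identity near $I$'' does not bind there.

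The main step is to construct $g_3 \in G_I$ that pushes the whole neighbourhood $J'$ off the support of $g$. Concretely, I would pick a compact interval $K$ with $\inf K > M$ (so $K$ lies to the right of $[-M,M]$) and take $g_3$ to be any homeomorphism supported in $(\sup \overline{I'}, +\infty)$ with $g_3(K) = J'$, that is, $g_3^{-1}(J') = K$. Such a $g_3$ exists: it is the identity on a neighbourhood of $I$ (hence lies in $G_I$) and on $(\sup \overline{I'},+\infty)$ it need only be an increasing homeomorphism of that half-line, with compact support, carrying the far-right interval $K$ onto $J'$ and equal to the identity beyond $K$. Matching the endpoints and interpolating monotonically on the finitely many intervening segments produces such a map; this elementary but slightly fiddly verification is the only place that requires genuine attention.

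With $g_3$ in hand I would set $g_1 := g_3^{-1} \in G_I$ and $g_2 := g_3\, g\, g_3^{-1}$, so that formally $g_1 g_2 g_3 = g_3^{-1}(g_3\, g\, g_3^{-1})g_3 = g$. It remains only to check that $g_2 \in G_J$. Each factor is compactly supported, so $g_2 \in \mathrm{Homeo}_c(\mathbb{R})$; and for $x \in J'$ we have $g_3^{-1}(x) \in K$, which is disjoint from $\mathrm{supp}(g)$, so $g$ fixes $g_3^{-1}(x)$ and therefore $g_2(x) = g_3\bigl(g(g_3^{-1}(x))\bigr) = g_3\bigl(g_3^{-1}(x)\bigr) = x$. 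Thus $g_2$ is the identity on the neighbourhood $J'$ of $J$, i.e. $g_2 \in G_J$, which gives the desired decomposition $g = g_1 g_2 g_3$.

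In summary, the whole argument rests on the single geometric fact that $J$ sits inside one component of the complement of a neighbourhood of $I$, so a suitable element of $G_I$ can slide a neighbourhood of $J$ along that half-line until it escapes the compact support of $g$; after that, the decomposition is just the tautology $g = g_3^{-1}\,(g_3\, g\, g_3^{-1})\,g_3$. I expect the existence and explicit shape of $g_3$ to be the only step needing care, and it is entirely elementary once $I$ and $J$ have been separated by disjoint neighbourhoods.
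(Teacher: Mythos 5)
Your proof is correct, but it is organized differently from the paper's. You write $g=g_3^{-1}\,(g_3\,g\,g_3^{-1})\,g_3$ and choose $g_3\in G_I$ so that $g_3^{-1}$ carries a neighbourhood $J'$ of $J$ entirely off the (compact) support of $g$, making the middle factor a conjugate of $g$ that is the identity near $J$. The paper instead takes $g_1=h_1^{-1}$, $g_2=h_2^{-1}$, $g_3=h_2h_1g$, where $h_1\in G_I$ moves $g(I)$ into the component of $\mathbb{R}-J$ containing $I$ and $h_2\in G_J$ is chosen to coincide with $g^{-1}h_1^{-1}$ near $h_1g(I)$, so that the third factor --- a corrected copy of $g$ itself rather than a conjugate --- lies in $G_I$. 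Both arguments exploit the same flexibility (an element of $G_I$ may act arbitrarily on the half-line containing $J$), but yours is a single conjugation and uses compactness of $\mathrm{supp}(g)$ to find a parking spot for $K$, whereas the paper's move-and-correct factorization only needs that $g(I)$ is a compact interval; each is elementary and either would do here. One cosmetic slip: an increasing homeomorphism of the half-line cannot carry $K=[k_1,k_2]$ onto an interval strictly to its left \emph{and} be the identity immediately to the right of $k_2$ (that would force $g_3(k_2)=k_2$); the support of $g_3$ must extend a bit beyond $K$, e.g.\ send $[k_2,N]$ onto $[\sup J',N]$ for some large $N$ and be the identity past $N$. This does not affect the argument, since all you need is $g_3^{-1}(J')\subset(M,+\infty)$.
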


\begin{proof}
Let $g$ be a homeomorphism in $\mathrm{Homeo}_{c}(\mathbb{R})$. Let $h_{1}$ be a homeomorphism in $G_{I}$ which sends the interval $g(I)$ to an interval which is in the same connected component of $\mathbb{R}-J$ as the interval $I$. Let $h_{2}$ be a homeomorphism in $G_{J}$ which is equal to $g^{-1}\circ h_{1}^{-1}$ on a neighbourhood of the interval $h_{1} \circ g(I)$. Then, the homeomorphism $h_{2} \circ h_{1} \circ g$ belongs to $G_{I}$. It suffices then to take $g_{1}=h_{1}^{-1}$, $g_{2}=h_{2}^{-1}$ and $g_{3}=h_{2} \circ h_{1} \circ g$ to conclude the proof of the lemma.
\end{proof}

Before stating the next lemma, observe that, for compact intervals $I$ with non-empty interiors, the sets $F_{I}$ are pairwise homeomorphic by an increasing homeomorphism. Indeed, let $I$ and $J$ be two such intervals. Then there exists a homeomorphism $\lambda$ in $\mathrm{Homeo}_{c}(\mathbb{R})$ such that $\lambda(I)=J$. Then $\lambda G_{I} \lambda^{-1}=G_{J}$. Taking the image by $\psi$, we obtain that $\psi(\lambda) \psi(G_{I}) \psi(\lambda)^{-1}=\psi(G_{J})$ and therefore $\psi(\lambda)(F_{I})=F_{J}$.

\begin{lemma}
For a compact interval $I \subset \mathbb{R}$ with non-empty interior, the closed set $F_{I}$ is compact.
\end{lemma}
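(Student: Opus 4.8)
The plan is to show that $F_I$ is bounded (it is closed by definition), and to do this by reducing boundedness to a statement about the fixed sets of the two ``half-line'' subgroups sitting on either side of $I$. First I would record two structural facts. \emph{Disjointness}: if $I$ and $J$ are disjoint compact intervals with nonempty interior, then $F_I\cap F_J=\emptyset$; indeed, given $x\in F_I\cap F_J$ and any $g\in\mathrm{Homeo}_c(\mathbb{R})$, Lemma~\ref{fragstab} lets us write $g=g_1g_2g_3$ with $g_1,g_3\in G_I$ and $g_2\in G_J$, whence $\psi(g)(x)=\psi(g_1)\psi(g_2)\psi(g_3)(x)=x$, so $x$ would be fixed by the entire image of $\psi$, contradicting the hypothesis. \emph{Monotonicity}: if $A\subseteq B$ then $G_B\subseteq G_A$, hence $\psi(G_B)\subseteq\psi(G_A)$ and $F_A\subseteq F_B$.

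Next I would set up the reduction. Write $I=[a,b]$ and fix $c>b$ and $c'<a$. Every homeomorphism supported in $(c,\infty)$ is the identity on the neighbourhood $(-\infty,c)$ of $I$, so it lies in $G_I$; that is, $G_{(-\infty,c]}\subseteq G_I$, and therefore $F_I\subseteq F_{(-\infty,c]}$. Symmetrically $G_{[c',\infty)}\subseteq G_I$ and $F_I\subseteq F_{[c',\infty)}$. Consequently it suffices to prove that $F_{(-\infty,c]}$ is bounded above and that $F_{[c',\infty)}$ is bounded below; by the reflection $x\mapsto -x$ these two statements are equivalent, so I would concentrate on showing that the fixed set $F_{(-\infty,c]}$ of $\psi$ restricted to the group $R$ of homeomorphisms supported in $(c,\infty)$ is bounded above.

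This last point is where I expect the real difficulty to lie, because boundedness at $+\infty$ cannot be detected by any single element, nor by any finitely generated subgroup of $R$: each compactly supported homeomorphism fixes a neighbourhood of $+\infty$, so this is a genuinely global property of the whole group $R$. To attack it I would argue by contradiction, exploiting the group $L$ of homeomorphisms supported in $(-\infty,c)$, which commutes with $R$. Then $\psi(L)$ preserves $F_{(-\infty,c]}$, and, exactly as in the proof of the preceding lemma (path-connectedness of $L$ together with the continuity of $\psi$), $\psi(L)$ preserves each connected component of $F_{(-\infty,c]}$ and of its complement; hence the finite endpoints of the complementary intervals are common fixed points of $\psi(L)$ and $\psi(R)$, that is, of $\psi(\mathrm{Stab}_c)$, where $\mathrm{Stab}_c=\langle L,R\rangle$ is the stabiliser of the point $c$. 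If $F_{(-\infty,c]}$ were unbounded above, these endpoints, together with a homeomorphism moving $c$ (which with $\mathrm{Stab}_c$ generates all of $\mathrm{Homeo}_c(\mathbb{R})$), should be pushed so as to produce a point fixed by the entire image of $\psi$, contradicting the no-fixed-point hypothesis; the remaining case, in which $F_{(-\infty,c]}$ contains a half-line near $+\infty$, would be excluded by the same mechanism, or else by reproducing the argument of the previous lemma that any morphism from a perfect group such as $\mathrm{Stab}_c$ to $\mathbb{R}$ must vanish.

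Once boundedness of the two half-line fixed sets is in hand, $F_I\subseteq F_{[c',\infty)}\cap F_{(-\infty,c]}$ is bounded, hence compact; it is nonempty by the previous lemma. As a by-product, the disjointness above together with the orientation-preservation of the image of $\psi$ pins down the expected left-to-right order of the sets $F_J$ for ordered intervals $J$, which is what makes the sandwiching work. The crux, and the step I would spend the most care on, is the half-line localisation in the third paragraph.
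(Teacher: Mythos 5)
Your structural reductions are all correct as far as they go: the monotonicity $F_I\subseteq F_{(-\infty,c]}\cap F_{[c',\infty)}$, the disjointness of $F_I$ and $F_J$ via Lemma~\ref{fragstab}, and the invariance of $F_{(-\infty,c]}$ and of the components of its complement under $\psi(L)$. But the proof stops exactly where the content of the lemma begins. Saying that the far-away fixed points ``should be pushed so as to produce a point fixed by the entire image of $\psi$'' is not an argument: you never exhibit a point whose full orbit is bounded, and that is the whole difficulty. The missing idea is an interleaving trick. Suppose $F_I$ contains a sequence $a_k\to+\infty$. Pick a compact interval $J$ disjoint from $I$ and $\lambda\in\mathrm{Homeo}_c(\mathbb{R})$ with $\lambda(I)=J$; then $F_J=\psi(\lambda)(F_I)$, and since $\psi(\lambda)$ is an increasing homeomorphism of $\mathbb{R}$ (in particular onto), $F_J$ also contains a sequence $b_k\to+\infty$. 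Choose indices with $a_{n_1}<b_{n_2}<a_{n_3}$ and fix $x_0<a_{n_1}$. For $g_1,g_3\in G_I$ and $g_2\in G_J$, the increasing homeomorphisms $\psi(g_3),\psi(g_2),\psi(g_1)$ fix $a_{n_1},b_{n_2},a_{n_3}$ respectively, so $\psi(g_1)\psi(g_2)\psi(g_3)(x_0)<a_{n_3}$; by Lemma~\ref{fragstab} every element of the image of $\psi$ is such a product, hence the entire orbit of $x_0$ is bounded above by $a_{n_3}$, and the supremum of its closure is a global fixed point of the action --- the desired contradiction. This three-point sandwich between two unbounded homeomorphic copies of the fixed set is what your sketch lacks.

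The detour through half-line groups also creates a case your outline cannot close as written: if $F_{(-\infty,c]}$ contains a half-line $[M,+\infty)$, there are no complementary gaps far to the right, hence no gap endpoints in the fixed set of $\psi(\langle L,R\rangle)$ to work with; neither ``the same mechanism'' nor the perfect-group argument applies (in the previous lemma that argument was tied to a one-parameter flow, which is absent here). The direct argument above needs only a sequence of points of $F_I$ tending to infinity, not gap endpoints, so it avoids this case distinction entirely; I would drop the reduction to half-lines.
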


\begin{proof}
Fix a compact interval $I$ with non-empty interior. Suppose by contradiction that there exists a sequence $(a_{k})_{k \in \mathbb{N}}$ of real numbers in $F_{I}$ which tends to $+\infty$ (if we suppose that it tends to $-\infty$, we obtain of course an analogous contradiction). Let us choose a compact interval $J$ which is disjoint from $I$. By the remark just before the lemma, there exists a sequence $(b_{k})_{k \in \mathbb{N}}$ of elements in $F_{J}$ which tends to $+\infty$. Take positive integers $n_{1}$, $n_{2}$ and $n_{3}$ such that $a_{n_{1}} < b_{n_{2}} < a_{n_{3}}$. Fix $x_{0}<a_{n_{1}}$. We notice then that for any homeomorphisms $g_{1} \in G_{I}$, $g_{2} \in G_{J}$ and $g_{3} \in G_{I}$, the following inequality is satisfied:
$$\psi(g_{1}) \psi(g_{2}) \psi(g_{3})(x_{0})<a_{n_{3}}.$$
Lemma \ref{fragstab} implies then that
$$ \overline{\left\{ \psi(g)(x_{0}), \ g \in \mathrm{Homeo}_{c}(\mathbb{R}) \right\}} \subset (-\infty,a_{n_{3}}].$$
The greatest element of the left-hand set is a fixed point of the image of $\psi$: this is not possible as this image was supposed to have no fixed point.
\end{proof}

\begin{lemma} \label{disjoint}
The closed sets $F_{x}$, where $x$ is a point of the real line, are non-empty, compact, pairwise disjoint and have an empty interior.
\end{lemma}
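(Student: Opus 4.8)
The plan is to treat the four asserted properties in turn, exploiting that $\{x\}$ is a nested intersection of compact intervals and that the preceding two lemmas already control $F_I$ for intervals $I$. Write $\{x\} = \bigcap_{n} I_{n}$ for a decreasing sequence of compact intervals $I_{n}$ with non-empty interior shrinking to $x$. Since $x \in I_{n}$, any homeomorphism fixing a neighbourhood of $I_{n}$ fixes a neighbourhood of $x$, so $G_{I_{n}} \subset G_{x}$; conversely an element of $G_{x}$ fixes a whole neighbourhood of $x$, hence a neighbourhood of $I_{n}$ for $n$ large. Thus $G_{x} = \bigcup_{n} G_{I_{n}}$ with the groups $G_{I_{n}}$ increasing, which yields $F_{x} = \bigcap_{n} F_{I_{n}}$ with the compact sets $F_{I_{n}}$ decreasing.

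For non-emptiness and compactness I would simply invoke the two preceding lemmas: each $F_{I_{n}}$ is non-empty and compact, so $F_{x}$ is a nested intersection of non-empty compact sets and is therefore non-empty, while being a closed subset of the compact set $F_{I_{1}}$ it is compact.

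For pairwise disjointness, fix $x \neq y$ and choose disjoint compact intervals $I \ni x$ and $J \ni y$ with non-empty interiors; then $G_{I} \subset G_{x}$ and $G_{J} \subset G_{y}$, whence $F_{x} \subset F_{I}$ and $F_{y} \subset F_{J}$, so it suffices to show $F_{I} \cap F_{J} = \emptyset$. If a point $z$ were fixed by both $\psi(G_{I})$ and $\psi(G_{J})$, then for any $g \in \mathrm{Homeo}_{c}(\mathbb{R})$ the factorisation $g = g_{1}g_{2}g_{3}$ with $g_{1}, g_{3} \in G_{I}$ and $g_{2} \in G_{J}$ provided by Lemma \ref{fragstab} would give $\psi(g)(z)=z$, making $z$ a global fixed point of the image of $\psi$ and contradicting the standing hypothesis.

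Finally, for the empty interior I would use homogeneity: for any $x, x'$ pick $f \in \mathrm{Homeo}_{c}(\mathbb{R})$ with $f(x)=x'$; then $f G_{x} f^{-1} = G_{x'}$, and applying $\psi$ gives $\psi(f)(F_{x}) = F_{x'}$ with $\psi(f)$ a homeomorphism of $\mathbb{R}$. Hence all the sets $F_{x}$ are carried onto one another by homeomorphisms of $\mathbb{R}$, so either they all have empty interior or they all have non-empty interior. In the latter case each $F_{x}$ would contain a non-degenerate open interval; since the $F_{x}$ are pairwise disjoint, so would be these intervals, producing uncountably many pairwise disjoint open intervals in $\mathbb{R}$, which is impossible. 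Therefore every $F_{x}$ has empty interior. The main obstacle is the disjointness step, where the fragmentation Lemma \ref{fragstab} and the no-fixed-point hypothesis must be combined correctly; the empty-interior step is then a short cardinality argument resting on the conjugation identity $\psi(f)(F_{x}) = F_{x'}$.
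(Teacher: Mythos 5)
Your proof is correct and follows essentially the same route as the paper: the finite-intersection (nested compact sets) argument for non-emptiness and compactness, Lemma \ref{fragstab} combined with the no-global-fixed-point hypothesis for disjointness, and the conjugation identity $\psi(f)(F_{x})=F_{f(x)}$ plus the countability of families of pairwise disjoint open intervals for the empty interior. The only cosmetic difference is that you intersect over a countable nested sequence of intervals rather than over all compact intervals whose interior contains $x$, which changes nothing.
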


\begin{proof}
Notice that, if an interval $J$ is contained in an interval $I$, then $G_{I} \subset G_{J}$ and the same inclusion is true if we take the images of these subsets of $\mathrm{Homeo}_{c}(\mathbb{R})$ under the morphism $\psi$. That is why $F_{J} \subset F_{I}$. Now, for any finite family of intervals $(I_{n})_{n}$ whose intersection has non-empty interior, the intersection of the closed sets $F_{I_{n}}$ which contains the non-empty closed set $F_{\cap_{n} I_{n}}$, is not empty. Fix now a point $x$ in $\mathbb{R}$. Notice that
$$F_{x}= \bigcap_{I} F_{I},$$
where the intersection is taken over the compact intervals $I$ whose interior contains the point $x$. By compactness, this set is not empty and it is compact.

Take two points $x \neq y$ in $\mathbb{R}$ et let us prove now that the sets $F_{x}$ and $F_{y}$ are disjoint. If they were not disjoint, the group generated by $\psi(G_{x})$ and $\psi(G_{y})$ would have a fixed point $p$. However, by Lemma \ref{fragstab}, the groups $G_{x}$ and $G_{y}$ generate the group $\mathrm{Homeo}_{c}(\mathbb{R})$. Then, the point $p$ would be a fixed point of the group $\psi(\mathrm{Homeo}_{c}(\mathbb{R}))$, a contradiction.

Now, let us prove that the sets $F_{x}$ have an empty interior. Of course, given two points $x$ and $y$ of the real line, if $h$ is a homeomorphism in $\mathrm{Homeo}_{c}(\mathbb{R})$ which sends the point $x$ to the point $y$, then $\psi(h)(F_{x})=F_{y}$. Therefore, the sets $F_{x}$ are pairwise homeomorphic. If the sets $F_{x}$ had a non-empty interior, there would exist uncountably many pairwise disjoint open intervals of the real line, which is false. 
\end{proof}

\begin{lemma} \label{compl}
For any point $x_{0}$ of the real line, any connected component $C$ of the complement of $F_{x_{0}}$ meets one of the sets $F_{y}$, with $y \neq x$.
\end{lemma}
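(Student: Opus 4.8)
The plan is to argue by contradiction. Suppose that some connected component $C$ of $\mathbb{R} \setminus F_{x_0}$ meets no set $F_y$ with $y \neq x_0$; since $C$ is itself disjoint from $F_{x_0}$, this means that $C \cap \bigcup_{y \in \mathbb{R}} F_y = \emptyset$. As $F_{x_0}$ is non-empty and compact (Lemma \ref{disjoint}), the component $C$ has a finite endpoint $a \in F_{x_0}$, and, treating the symmetric situation identically, we may assume that $C$ lies to the right of $a$, so that $C \supseteq (a, a+\varepsilon)$ for some $\varepsilon>0$. I would then seek a contradiction by producing, inside $C$, a point belonging to some $F_y$ with $y \neq x_0$.

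The mechanism I would use is the orbit of $a$ under a well-chosen flow. Given a compactly supported vector field $X$ with $X(x_0) \neq 0$, let $(\varphi^t)$ be its flow and set $\Phi^t = \psi(\varphi^t)$. By continuity of $\psi$ this is a continuous one-parameter group, and, as noted at the beginning of the proof, its elements are increasing homeomorphisms. Using the relation $\psi(h)(F_x) = F_{h(x)}$ established in the proof of Lemma \ref{disjoint}, we get $\Phi^t(a) \in F_{\varphi^t(x_0)}$ for every $t$; moreover $\varphi^t(x_0) \neq x_0$ for $t \neq 0$ because $X(x_0) \neq 0$. Now if $a$ is \emph{not} fixed by this flow, then the orbit $\left\{ \Phi^t(a) : t \in \mathbb{R} \right\}$ is an open interval $(a^-, a^+)$ with $a^- < a < a^+$: indeed a non-fixed point of a flow moves strictly monotonically, so $t \mapsto \Phi^t(a)$ is a continuous strictly monotone injection with open image. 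Hence $(a, a^+)$ is a right-neighbourhood of $a$ all of whose points are of the form $\Phi^t(a)$ with $t \neq 0$, that is, lie in $F_{\varphi^t(x_0)}$ with $\varphi^t(x_0) \neq x_0$. Intersecting $(a, a^+)$ with $C$ produces a point of $C$ lying in some $F_y$, $y \neq x_0$, which is the desired contradiction.

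Everything therefore reduces to the key step: \textbf{finding a flow of the above type that actually moves} $a$. This is the part I expect to be the main obstacle. I would prove it again by contradiction: assume every flow $(\varphi^t)$ with $X(x_0) \neq 0$ fixes $a$, i.e. $\psi(\varphi^t)(a) = a$ for all such flows and all $t$. The stabilizer $\mathrm{Stab}(a) = \left\{ g \in \mathrm{Homeo}_c(\mathbb{R}) : \psi(g)(a) = a \right\}$ would then contain both $G_{x_0}$ (since $a \in F_{x_0}$) and all these flows. I claim that these generate $\mathrm{Homeo}_c(\mathbb{R})$: the flows through $x_0$ already act transitively on $\mathbb{R}$, so by the orbit-stabilizer principle it suffices to show that every homeomorphism fixing the point $x_0$ belongs to the subgroup generated. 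Writing such a homeomorphism as a product of two factors supported respectively in $(-\infty, x_0]$ and $[x_0, \infty)$, one conjugates each factor by a flow pushing points off $x_0$; the conjugate is then supported away from $x_0$, hence lies in $G_{x_0}$, and conjugating back exhibits the factor as a product of flows and an element of $G_{x_0}$. Consequently $\mathrm{Stab}(a)$ would be all of $\mathrm{Homeo}_c(\mathbb{R})$, making $a$ a global fixed point of $\psi(\mathrm{Homeo}_c(\mathbb{R}))$ --- contradicting the hypothesis that the image of $\psi$ has no fixed point. Thus some flow through $x_0$ moves $a$, and the construction of the second paragraph yields the lemma.
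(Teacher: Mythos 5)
Your proof is correct, but it follows a genuinely different route from the paper's. The paper argues by a counting/disjointness trick: assuming $C=(a_{1},a_{2})$ misses every $F_{y}$, it chooses for each $y$ a homeomorphism $h_{x_{0},y}$ with $h_{x_{0},y}(x_{0})=y$ and shows that the open intervals $\psi(h_{x_{0},y})((a_{1},a_{2}))$ are pairwise disjoint (two overlapping ones would have to share their finite endpoints, which lie in the disjoint sets $F_{y_{1}}$ and $F_{y_{2}}$), contradicting the fact that $\mathbb{R}$ contains no uncountable family of pairwise disjoint nonempty open intervals --- the same mechanism already used in Lemma \ref{disjoint}. You instead argue locally at a finite endpoint $a\in F_{x_{0}}$ of $C$: you produce a compactly supported flow $(\varphi^{t})$ with $X(x_{0})\neq 0$ whose image under $\psi$ moves $a$, so that the orbit of $a$ is an open interval straddling $a$ whose points lie in the sets $F_{\varphi^{t}(x_{0})}$ with $\varphi^{t}(x_{0})\neq x_{0}$, and this interval necessarily enters $C$. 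The existence of such a flow rests on your generation argument --- $G_{x_{0}}$ together with the flows through $x_{0}$ generate $\mathrm{Homeo}_{c}(\mathbb{R})$, so if every such flow fixed $a$ then $a$ would be a global fixed point of $\psi$ --- and that argument is sound. Two small points are worth making explicit: (i) the injectivity of $t\mapsto\psi(\varphi^{t})(a)$ when $a$ is not fixed requires the square-root trick (an increasing homeomorphism $g$ with $g^{2}(x)=x$ satisfies $g(x)=x$) together with the continuity of $\psi$, exactly as in the paper's proof that $F_{I}\neq\emptyset$; (ii) your argument genuinely uses the continuity of $\psi$ to know that the orbit of $a$ is an interval rather than merely a dense subset, whereas the paper's proof of this particular lemma does not invoke continuity at all. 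Both approaches are valid; the paper's is shorter and recycles an earlier device, while yours isolates a clean reusable generation lemma and gives a more dynamical, localized reason for the conclusion.
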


\begin{proof}
Let us fix a point $x_{0}$ in $\mathbb{R}$. Let $(a_{1},a_{2})$ be a connected component of the complement of $F_{x_{0}}$. It is possible that either $a_{1}= -\infty$ or $a_{2}=+ \infty$.

Let us prove by contradiction that there exists $y_{0} \neq x_{0}$ such that $F_{y_{0}} \cap (a_{1},a_{2}) \neq \emptyset$. Suppose that, for any point $y \neq x_{0}$, $F_{y} \cap (a_{1},a_{2})= \emptyset$. For any couple of real numbers $(z_{1},z_{2})$, choose a homeomorphism $h_{z_{1},z_{2}}$ in $\mathrm{Homeo}_{c}(\mathbb{R})$ such that $h_{z_{1},z_{2}}(z_{1})=z_{2}$. We claim that the open sets $\psi(h_{x_{0},y})((a_{1},a_{2}))$, for $y \in \mathbb{R}$, are pairwise disjoint, which is not possible as there would be uncountably many pairwise disjoint open intervals in $\mathbb{R}$. Indeed, if this was not the case, suppose that $\psi(h_{x_{0},y_{1}})((a_{1},a_{2})) \cap \psi(h_{x_{0},y_{2}})((a_{1},a_{2})) \neq \emptyset$ for $y_{1} \neq y_{2}$. As the union of the closed sets $F_{y}$ is invariant under the action $\psi$, then, for $i=1,2$, when $a_{i}$ is finite, $\psi(h_{x_{0},y_{1}})^{-1} \circ \psi(h_{x_{0},y_{2}})(a_{i}) \notin (a_{1},a_{2})$, and $\psi(h_{x_{0},y_{2}})^{-1} \circ \psi(h_{x_{0},y_{1}})(a_{i}) \notin (a_{1},a_{2})$ so that $\psi(h_{x_{0},y_{1}})(a_{i})=\psi(h_{x_{0},y_{2}})(a_{i})$. But this last equality cannot hold as the point on the left-hand side belongs to $F_{y_{1}}$ and the point on the right-hand side belongs to $F_{y_{2}}$ and we observed that these two closed sets were disjoint.
\end{proof}

\begin{lemma}
Each set $F_{x}$ contains only one point.
\end{lemma}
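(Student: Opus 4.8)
The plan is to show that no set $F_{x_0}$ can contain two distinct points, after which Lemma \ref{disjoint} (the sets $F_x$ have empty interior) forces each $F_x$ to be a single point. First I would record a homogeneity fact: for any $x,y$ the homeomorphism $\psi(h_{x,y})$ sends $F_x$ onto $F_y$ and is increasing, so the sets $F_x$ are pairwise order-isomorphic. Hence if one of them, say $F_{x_0}$, has at least two points, then all of them do, and, being compact with empty interior, each $F_x$ then has at least one bounded connected component of its complement, which I will call a \emph{gap} of $F_x$.

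The main engine is an invariance-and-transitivity argument. For $g\in G_{x_0}$ the homeomorphism $\psi(g)$ fixes $F_{x_0}$ pointwise, hence preserves each connected component of $\mathbb{R}-F_{x_0}$; moreover $gG_wg^{-1}=G_{g(w)}$ gives $\psi(g)(F_w)=F_{g(w)}$. Therefore, for a fixed component $C$ of $\mathbb{R}-F_{x_0}$, the set $\{w:\ F_w\cap C\neq\emptyset\}$ is invariant under $G_{x_0}$, which acts transitively on each of $(-\infty,x_0)$ and $(x_0,\infty)$. Combined with Lemma \ref{compl}, which provides some $y_0\neq x_0$ with $F_{y_0}\cap C\neq\emptyset$, this shows that $F_w$ meets $C$ for every $w$ lying on whichever side of $x_0$ the point $y_0$ lies.

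The heart of the proof, and the step I expect to be the main obstacle, is to upgrade this into a total ordering of the family: for $x<y$ the set $F_y$ lies in the closure of a single connected component of $\mathbb{R}-F_x$ (the sets are \emph{unlinked}). Here the hypothesis that $\psi$ takes values in \emph{increasing} homeomorphisms is essential, since an arbitrary disjoint family carried by a flow may perfectly well become linked. I would argue by contradiction: if $F_x$ and $F_y$ (with $x<y$) were linked, then $F_y$ would meet two distinct components $C_1,C_2$ of $\mathbb{R}-F_x$, and by the invariance argument above $F_w$ would meet both $C_1$ and $C_2$, equivalently $F_w$ would straddle a fixed point $c\in F_x$, for every $w>x$. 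I would then reuse the mechanism from the proof that $F_I$ is compact: using Lemma \ref{fragstab} to write a general $g$ as $g_1g_2g_3$ with $g_1,g_3$ fixing a neighbourhood of one interval and $g_2$ of a disjoint one, the interlaced points provided by the straddling configuration trap an orbit $\{\psi(g)(p)\}$ below a finite ceiling. The supremum of such an orbit would then be a fixed point of $\psi(\mathrm{Homeo}_{c}(\mathbb{R}))$, contradicting the hypothesis that the image has no fixed point; the orientation of $\psi$ is exactly what guarantees that the inequalities defining the ceiling all point the same way. (A variant uses a compactly supported pushing flow $\phi^{t}$ with $\phi^{t}(x)$ increasing: then $\psi(\phi^{t})$ is a one-parameter group of increasing homeomorphisms whose orbits are monotone, and this monotonicity is incompatible, through the continuity of $\psi$, with the persistent straddling of the point $c$.)

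Once unlinking is established, the proof finishes by the same counting principle used repeatedly above. Unlinking makes the convex hulls $[\min F_x,\max F_x]$ pairwise disjoint for distinct $x$; since each $F_x$ has a gap lying inside its convex hull, these gaps form a family of pairwise disjoint nonempty open intervals indexed by $x\in\mathbb{R}$. That would produce uncountably many pairwise disjoint open intervals of the real line, which is impossible. Hence no $F_x$ has a gap, and by empty interior each $F_x$ is reduced to a single point.
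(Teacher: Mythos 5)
Your middle step --- ruling out the ``linked'' configuration by trapping an orbit between interlaced fixed points of $\psi(G_x)$ and $\psi(G_y)$ --- is exactly the right mechanism; in fact it is the whole of the paper's proof. But the architecture you build around it breaks at the last step. Unlinking, as you define it ($F_y$ contained in the closure of a single component of $\mathbb{R}-F_x$), does \emph{not} make the convex hulls $[\min F_x,\max F_x]$ pairwise disjoint: it only makes them disjoint \emph{or nested}, since $F_y$ may sit entirely inside a bounded gap of $F_x$. This nested case is unavoidable in your hypothetical scenario, because Lemma \ref{compl} --- which you yourself invoke --- guarantees that every bounded gap of $F_x$ meets some $F_y$, and unlinking plus disjointness then forces that $F_y$ entirely inside the gap, so its hull is contained in the hull of $F_x$. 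An uncountable disjoint-or-nested family of intervals, each containing a gap, does not yield uncountably many pairwise disjoint open intervals (consider a nested chain), so the counting argument does not close.

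The repair is to notice that the configuration you rule out in the unlinking step and the configuration handed to you by Lemma \ref{compl} are the same up to exchanging the roles of $x$ and $y$, and the orbit-trapping argument kills both; there is no need for the unlinking/counting superstructure. Concretely, as in the paper: if $F_x$ contains two points, it has a bounded gap $(p_1,p_2)$ with $p_1,p_2\in F_x$; Lemma \ref{compl} gives $y\neq x$ with a point of $F_y$ in $(p_1,p_2)$; taking $r<p_1$ and writing an arbitrary $g$ as $g_1g_2g_3$ with $g_1,g_3\in G_x$ and $g_2\in G_y$ via Lemma \ref{fragstab}, monotonicity and the interlaced fixed points give $\psi(g)(r)<p_2$ for every $g$, so the supremum of the orbit of $r$ is a global fixed point of $\psi$, contradicting the hypothesis. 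That is the entire proof.
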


\begin{proof}
Suppose that there exists a real number $x$ such that the set $F_{x}$ contains two points $p_{1}<p_{2}$. By Lemma \ref{compl}, there exists a real number $y$ different from $x$ such that the set $F_{y}$ has a common point with the open interval $(p_{1},p_{2})$. Take a point $r<p_{1}$. Then, for any homeomorphisms $g_{1}$ in $G_{x}$, $g_{2}$ in $G_{y}$ and $g_{3}$ in $G_{x}$,
$$ \psi(g_{1}) \circ \psi(g_{2}) \circ \psi(g_{3})(r) < p_{2}.$$
By Lemma \ref{fragstab}, this implies that the following inclusion holds:
$$ \left\{ \psi(g)(r), g \in \mathrm{Homeo}_{c}(\mathbb{R}) \right\} \subset (-\infty, p_{2}].$$
The supremum of the left-hand set provides a fixed point for the action $\psi$, a contradiction.
\end{proof}

Take a homeomorphism $f$ in $\mathrm{Homeo}_{c}(\mathbb{R})$ and a point $x$ in $\mathbb{R}$. Then the homeomorphism $\psi(f)$ sends the only point $h(x)$ in $F_{x}$ to the only point $h(f(x))$ in $F_{f(x)}$. This implies that $\psi(f)\circ h= h \circ f$. Thus, it suffices to use Lemma \ref{homeomorphism} below to complete the proof of Theorem \ref{actionsdersurr}.
\end{proof}

\begin{lemma} \label{homeomorphism}
Let us denote by $h(x)$ the only point in the set $F_{x}$. Then the map $h$ is a homeomorphism. 
\end{lemma}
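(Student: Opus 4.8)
The plan is to show that $h$ is a strictly monotone, continuous bijection of $\mathbb{R}$. First I would record injectivity: if $h(x) = h(y)$, then the point $h(x)$ lies in $F_x \cap F_y$, so these sets meet and Lemma \ref{disjoint} forces $x = y$. Next I would prove that $h$ is strictly monotone, using the relation $\psi(f) \circ h = h \circ f$ together with two facts established or immediate in the excerpt: the image of $\psi$ consists of increasing homeomorphisms, and $\mathrm{Homeo}_{c}(\mathbb{R})$ acts transitively on ordered pairs of points by increasing compactly supported homeomorphisms. Concretely, fix one pair $x_0 < y_0$; by injectivity either $h(x_0) < h(y_0)$ or $h(x_0) > h(y_0)$. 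Given any other pair $x' < y'$, choose an increasing $f \in \mathrm{Homeo}_{c}(\mathbb{R})$ with $f(x_0) = x'$ and $f(y_0) = y'$; since $\psi(f)$ is increasing and $h(x') = \psi(f)(h(x_0))$, $h(y') = \psi(f)(h(y_0))$, the strict inequality between $h(x_0)$ and $h(y_0)$ is preserved. Hence the same direction holds for every pair and $h$ is strictly monotone. I would treat the increasing case, the decreasing one being symmetric (for instance after conjugating $\psi$ by $x \mapsto -x$).

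The main point, and the step I expect to be the real obstacle, is continuity. Since $h$ is monotone, its one-sided limits exist everywhere and it is continuous except on an at most countable set of jump points; the idea is to show that a single jump would force a jump at \emph{every} point, which is absurd. Suppose $h$ has, say, a right jump at some $x_0$, i.e. $\beta = \lim_{x \to x_0^{+}} h(x) > h(x_0)$. For an arbitrary point $x_0'$, pick an increasing $f \in \mathrm{Homeo}_{c}(\mathbb{R})$ with $f(x_0) = x_0'$. Because $f$ is an increasing homeomorphism and $\psi(f)$ is continuous, letting $x \to x_0^{+}$ in the identity $h(f(x)) = \psi(f)(h(x))$ yields $\lim_{u \to x_0'^{+}} h(u) = \psi(f)(\beta)$, while $h(x_0') = \psi(f)(h(x_0))$; as $\psi(f)$ is increasing and $h(x_0) < \beta$, this exhibits a genuine jump at $x_0'$. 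Thus $h$ would be discontinuous at every real number, contradicting the fact that a monotone function has an at most countable set of discontinuities. Hence $h$ is continuous.

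Finally I would establish surjectivity. The image $h(\mathbb{R}) = \bigcup_{x} F_x$ is invariant under every $\psi(g)$, since $\psi(g)(h(x)) = h(g(x))$ and $\mathrm{Homeo}_{c}(\mathbb{R})$ acts transitively, so its closure $S = \overline{h(\mathbb{R})}$ is a closed set with $\psi(g)(S) = S$ for all $g$. If $h$ were bounded above, then $M = \sup S$ would belong to $S$; since each $\psi(g)$ is an increasing homeomorphism mapping $S$ onto $S$, it would fix $\sup S$, so $M$ would be a common fixed point of $\psi(\mathrm{Homeo}_{c}(\mathbb{R}))$, contradicting the hypothesis that the image of $\psi$ has no fixed point. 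The same argument rules out a lower bound. Therefore $h$ is an unbounded, continuous, strictly increasing map, so by the intermediate value theorem $h(\mathbb{R}) = \mathbb{R}$ and $h$ is a homeomorphism, which completes the proof of the lemma and hence of Theorem \ref{actionsdersurr}.
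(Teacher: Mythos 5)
Your proof is correct. The injectivity and surjectivity steps coincide with the paper's: one-to-one follows from Lemma \ref{disjoint}, and boundedness of the invariant set $h(\mathbb{R})$ would produce a global fixed point of $\psi$ at its supremum or infimum. Where you genuinely diverge is the continuity step. The paper's argument is a direct use of the continuity hypothesis on $\psi$: near any $x_{0}$ one parametrizes nearby points as $\varphi^{t}(x_{0})$ for the flow of a vector field that does not vanish near $x_{0}$, and then $h(\varphi^{t}(x_{0}))=\psi(\varphi^{t})(h(x_{0}))$ depends continuously on $t$ precisely because $\psi$ is continuous. You instead first extract strict monotonicity of $h$ from the equivariance relation $\psi(f)\circ h=h\circ f$, the transitivity of $\mathrm{Homeo}_{c}(\mathbb{R})$ on ordered pairs, and the fact (recorded earlier in the paper) that the image of $\psi$ consists of increasing homeomorphisms; you then rule out jump discontinuities by homogeneity, since a single jump would propagate to a jump at every point, contradicting the countability of the set of discontinuities of a monotone function. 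Your route is slightly longer but buys something real: it makes no use of the continuity of $\psi$, which is in the spirit of the paper's remark that the theorem holds without the continuity hypothesis -- for this particular lemma your argument actually delivers that stronger statement. The paper's route is shorter but is one of the places in the proof of Theorem \ref{actionsdersurr} where continuity of $\psi$ is explicitly invoked.
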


\begin{proof}
By Lemma \ref{disjoint}, the map $h$ is one-to-one.

Fix $x_{o} \in \mathbb{R}$ and let us prove that the map $h$ is continuous at $x_{0}$. Take a compactly supported $C^{1}$ vector field $\mathbb{R} \rightarrow \mathbb{R}$ which does not vanish on a neighbourhood of $x_{0}$ and denote by $(\varphi^{t})_{t \in \mathbb{R}}$ the flow of this vector field. Then, for any time $t$, $h(\varphi^{t}(x_{0}))= \psi(\varphi^{t})(h(x_{0}))$, which proves that the map $h$ is continuous at $x_{0}$.

Finally, let us prove that the map $h$ is onto. Notice that the interval $h(\mathbb{R})$ is invariant under the action $\psi$. Hence, if $\sup(h(\mathbb{R})) < + \infty$ (respectively $\inf(h(\mathbb{R})) > - \infty$), then the point $\sup(h(\mathbb{R}))$ (respectively $\inf(h(\mathbb{R}))$) would be a fixed point of the action $\psi$, a contradiction.
\end{proof}

\begin{proposition} \label{actionsdersurs}
Any action of the group $\mathrm{Homeo}_{c}(\mathbb{R})$ on the circle has a fixed point.
\end{proposition}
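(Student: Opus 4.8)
The plan is to argue by contradiction: assume the action $\psi:\mathrm{Homeo}_{c}(\mathbb{R})\to\mathrm{Homeo}(\mathbb{S}^{1})$ has no fixed point and manufacture one. Two preliminary reductions come for free from the algebra of the group, exactly as in the introduction. Since $\mathrm{Homeo}_{c}(\mathbb{R})$ is simple, $\psi$ is injective (a trivial $\psi$ would fix every point). Since it is perfect, the orientation character $\mathrm{Homeo}(\mathbb{S}^{1})\to\mathbb{Z}/2\mathbb{Z}$ precomposed with $\psi$ is trivial, so the image lies in $\mathrm{Homeo}_{+}(\mathbb{S}^{1})$; in particular every $\psi(f)$ has a well defined rotation number $\rho(\psi(f))$.

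The core tool I would isolate is a Flow Lemma: for every one-parameter subgroup $(\varphi^{t})_{t\in\mathbb{R}}$ of $\mathrm{Homeo}_{c}(\mathbb{R})$, the image $(\psi(\varphi^{t}))$ has a common fixed point on $\mathbb{S}^{1}$. To prove it, observe that $(\psi(\varphi^{t}))$ is an abelian, hence amenable, group, so it preserves a Borel probability measure on $\mathbb{S}^{1}$; integrating the displacement of a continuous lift of the flow shows that $t\mapsto\rho(\psi(\varphi^{t}))$ is a continuous homomorphism $\mathbb{R}\to\mathbb{R}/\mathbb{Z}$, which therefore has nontrivial kernel. Choosing $t_{0}\neq 0$ with $\rho(\psi(\varphi^{t_{0}}))=0$, the homeomorphism $\psi(\varphi^{t_{0}})$ has a fixed point, while $\psi(\varphi^{t_{0}})\neq\mathrm{id}$ by injectivity; thus $\mathrm{Fix}(\psi(\varphi^{t_{0}}))$ is a nonempty proper closed subset of $\mathbb{S}^{1}$. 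The flow $(\psi(\varphi^{s}))$ commutes with $\psi(\varphi^{t_{0}})$ and is connected (continuous image of $\mathbb{R}$), so it preserves each complementary arc of this fixed set and fixes all their endpoints, which are then the desired common fixed points.

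With the Flow Lemma available I would mirror the proof of Theorem \ref{actionsdersurr}. For a compact interval $I$ with nonempty interior, pick a nontrivial flow supported in $I$; since $G_{I}$ commutes with it and $\psi(G_{I})$ is connected (as in the paper's earlier use of continuity of $\psi$), the group $\psi(G_{I})$ preserves the flow's fixed set and fixes the endpoints of its complementary arcs, so $F_{I}$, and likewise each $F_{x}$, is nonempty. The lemmas of Theorem \ref{actionsdersurr} that do not use the order of $\mathbb{R}$ then transfer verbatim: by Lemma \ref{fragstab} the sets $F_{x}$ and $F_{y}$ are disjoint for $x\neq y$ (a common fixed point would be fixed by all of $\mathrm{Homeo}_{c}(\mathbb{R})$), they have empty interior because they form an uncountable pairwise disjoint family in $\mathbb{S}^{1}$, and every complementary arc of $F_{x_{0}}$ meets some $F_{y}$ exactly as in Lemma \ref{compl}.

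The remaining step is to show that each $F_{x}$ is a single point and to assemble the equivariant map $h:\mathbb{R}\to\mathbb{S}^{1}$, $\{h(x)\}=F_{x}$; this is where the main difficulty lies, since the single-point argument of Theorem \ref{actionsdersurr} rests essentially on the total order of the line (suprema of orbits). I would replace these order arguments by the cyclic order and orientation of $\mathbb{S}^{1}$ together with the Flow Lemma, using that $\psi(G_{x})$ and $\psi(G_{y})$ are connected and hence preserve every complementary arc of their fixed sets. Once $h$ is obtained it is a continuous equivariant injection, so $h(\mathbb{R})$ is a proper arc of $\mathbb{S}^{1}$; at this point the contradiction is immediate and conceptually clean: for any compactly supported $f$ one has $h(f(x))=\psi(f)(h(x))$, and letting $x\to+\infty$, where $f$ is the identity, shows that the endpoint $\lim_{x\to+\infty}h(x)$ is fixed by every $\psi(f)$, contradicting the hypothesis. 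Thus the hard part is precisely the circle analogue of the single-point lemma; everything else is either automatic or, as just indicated, actually easier on the circle than on the line, because the compact support of the generators forces the two ends of $h(\mathbb{R})$ to be global fixed points.
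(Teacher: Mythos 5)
The paper's own proof is entirely different and takes two lines: since $H^{2}_{b}(\mathrm{Homeo}_{c}(\mathbb{R});\mathbb{Z})=0$, the bounded Euler class of the action vanishes, and by Ghys's theory this forces a global fixed point. Your dynamical route is legitimate in spirit, but as written it has a genuine gap exactly where you locate it: the claim that each $F_{x}$ is a single point. The line argument you want to transport rests on taking the supremum of an orbit that is bounded from above; there is no analogue on the circle. Knowing that the orbit of a point avoids $p_{2}$ only produces a closed invariant set whose complementary arcs are \emph{permuted} by the group, with no distinguished arc (such as the unbounded component on the line) whose endpoint would have to be fixed. ``Replace the order by the cyclic order'' is not an argument, and no proof of the single-point property is supplied. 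A second, smaller issue: the proposition carries no continuity hypothesis, and your Flow Lemma needs one --- without continuity of $t\mapsto\psi(\varphi^{t})$ the homomorphism $t\mapsto\rho(\psi(\varphi^{t}))$ into $\mathbb{R}/\mathbb{Z}$ can be injective (as abstract groups $\mathbb{R}$ does embed in $\mathbb{R}/\mathbb{Z}$), and the connectedness argument used to fix the endpoints of complementary arcs also breaks. The cohomological proof needs no continuity; in the paper's applications the actions are continuous, so for you this is a restriction rather than an error.

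The good news is that the gap sits in a step you do not need. Your closing observation already contains a complete proof as soon as the sets $F_{x}$ are nonempty: pick $x_{n}\to+\infty$, pick any $p_{n}\in F_{x_{n}}$, and let $p$ be an accumulation point of $(p_{n})$ in the compact circle. Any $f\in\mathrm{Homeo}_{c}(\mathbb{R})$ is supported in some $[-N,N]$, hence fixes a neighbourhood of $x_{n}$ for all $x_{n}>N$, hence lies in $G_{x_{n}}$ and fixes $p_{n}$ for those $n$; passing to the limit, $\psi(f)(p)=p$. So $p$ is a global fixed point. This uses neither injectivity nor continuity of a selection $h$, nor disjointness, nor the single-point property --- only nonemptiness of the $F_{x}$ (indeed of the $F_{[n,n+1]}$) and compactness of $\mathbb{S}^{1}$. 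With that shortcut your approach becomes a correct, self-contained alternative to the bounded-cohomology proof, at the price of the continuity hypothesis needed for the Flow Lemma.
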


Hence, the description of the actions of the group $\mathrm{Homeo}_{c}(\mathbb{R})$ on the circle is given by an action of the group $\mathrm{Homeo}_{c}(\mathbb{R})$ on the real line which is homeomorphic to the circle minus one point. 

\begin{proof}
As $H^{2}_{b}(\mathrm{Homeo}_{c}(\mathbb{R}), \mathbb{Z})= \left\{ 0 \right\}$ (see \cite{Math}), the bounded Euler class of this action necessarily vanishes so that this action admits a fixed point (see \cite{Ghy} about the bounded Euler class of an action on the circle and its properties).
\end{proof}

Finally, let us recall a result which is proved in \cite{Mat}.

\begin{proposition} \label{actionsdessurs}
Any non-trivial action of the group $\mathrm{Homeo}_{0}(\mathbb{S}^{1})$ on the circle is a conjugacy by a homeomorphism of the circle.
\end{proposition}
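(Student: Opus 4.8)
The plan is to run the Whittaker-type strategy already used for Theorem \ref{actionsdersurr}, but now with the point-stabilizers of the circle, using the line theorem as a black box on complementary arcs. Write $G=\mathrm{Homeo}_0(\mathbb{S}^1)$ and let $\varphi\colon G\to\mathrm{Homeo}(\mathbb{S}^1)$ be a non-trivial action. Since $G$ is simple, $\varphi$ is injective; since $G$ is connected and, by Rosendal--Solecki, $\varphi$ is continuous, its image lies in $\mathrm{Homeo}_0(\mathbb{S}^1)$, so $\varphi$ preserves orientation. I would then show that $\varphi(G)$ has no global fixed point: if $p$ were fixed by $\varphi(G)$, then $G$ would act on $\mathbb{S}^1\setminus\{p\}\cong\mathbb{R}$, and restricting to the rotation subgroup $\mathrm{SO}(2)\subset G$ would give a continuous action of the compact connected group $\mathrm{SO}(2)$ on $\mathbb{R}$. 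It must preserve the orientation of $\mathbb{R}$ (a continuous homomorphism $\mathrm{SO}(2)\to\mathbb{Z}/2\mathbb{Z}$ is trivial), and an orientation-preserving homeomorphism of $\mathbb{R}$ of finite order is the identity; as the torsion is dense in $\mathrm{SO}(2)$ and $\varphi$ is continuous, $\mathrm{SO}(2)$ would act trivially, contradicting injectivity.

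Next, exactly as in the line case, I would let $G_\theta$ be the subgroup of homeomorphisms fixing a neighbourhood of $\theta$ (so $G_\theta\cong\mathrm{Homeo}_c(\mathbb{R})$) and let $F_\theta$ be the fixed-point set of $\varphi(G_\theta)$. By Proposition \ref{actionsdersurs} applied to the action of $G_\theta\cong\mathrm{Homeo}_c(\mathbb{R})$ on the circle, each $F_\theta$ is non-empty; it is also closed, proper (otherwise $\varphi(G_\theta)$ would be trivial), and equivariant in the sense that $\varphi(f)(F_\theta)=F_{f(\theta)}$. As for Lemma \ref{disjoint}, the sets $F_\theta$ are pairwise disjoint: for $\theta\neq\theta'$ a fragmentation argument on the circle shows that $G_\theta$ and $G_{\theta'}$ generate $G$, so a common fixed point of $\varphi(G_\theta)$ and $\varphi(G_{\theta'})$ would be a global fixed point, which has just been excluded. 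The analogues of Lemmas \ref{fragstab} and \ref{compl} carry over, the second giving that every complementary arc of $F_\theta$ meets some $F_y$ with $y\neq\theta$.

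The crux, and the step I expect to be the main obstacle, is to prove that each $F_\theta$ is a single point. On each complementary arc, which is homeomorphic to $\mathbb{R}$, the connected group $\varphi(G_\theta)$ acts, and by Theorem \ref{actionsdersurr} this action is conjugate to the standard inclusion wherever it is fixed-point-free. If $F_\theta$ contained two points $p_1\neq p_2$, I would pick, via the complement lemma, a point $y\neq\theta$ with $F_y$ meeting one complementary arc $I$, and then mimic the single-point lemma of the line case: using $G=\langle G_\theta,G_y\rangle$ together with the factorization $g=g_1g_2g_3$ of Lemma \ref{fragstab}, confine the $\varphi(G)$-orbit of a suitable point of $I$ to a proper closed arc. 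The difficulty special to the circle is that the ``supremum of a bounded orbit'' trick no longer produces a fixed point directly; instead I would argue that such a confined orbit yields a proper non-empty closed $\varphi(G)$-invariant set, and rule this out (a finite invariant set forces a global fixed point by simplicity, contradicting the first paragraph; an infinite one is reduced, arc by arc, to the line theorem), thereby reaching the required contradiction. This simultaneously establishes that the action $\varphi(G)$ is minimal.

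Finally, writing $F_\theta=\{h(\theta)\}$, equivariance gives $\varphi(f)\circ h=h\circ f$; disjointness of the $F_\theta$ gives injectivity of $h$; continuity follows by moving $\theta$ along the flow of a non-vanishing vector field and invoking continuity of $\varphi$, exactly as in Lemma \ref{homeomorphism}; and surjectivity follows because $h(\mathbb{S}^1)$ is a non-empty closed $\varphi(G)$-invariant set, hence all of $\mathbb{S}^1$ by the minimality just obtained. Therefore $h$ is a homeomorphism of the circle with $\varphi(f)=h\circ f\circ h^{-1}$ for every $f$, which is the assertion of Proposition \ref{actionsdessurs}.
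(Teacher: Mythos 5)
The paper does not actually prove this proposition: it is quoted from Matsumoto's article \cite{Mat}, whose argument goes through bounded Euler classes. So your attempt is necessarily an independent route, and most of its scaffolding is sound: injectivity and continuity, the absence of a global fixed point, non-emptiness of the sets $F_{\theta}$ via Proposition \ref{actionsdersurs}, their pairwise disjointness via fragmentation, the complement lemma, and the final construction of $h$ all transfer from the line case as you describe.

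The gap is exactly where you flag it, and the proposed mimicry of the single-point lemma genuinely fails there. On the line, the chain $\psi(g_{3})(r)<p_{1}$, then $\psi(g_{2})\psi(g_{3})(r)<q$, then $\psi(g_{1})\psi(g_{2})\psi(g_{3})(r)<p_{2}$ works because an increasing homeomorphism of $\mathbb{R}$ fixing a single point $q$ preserves the ray $(-\infty,q)$, and the rays $(-\infty,p_{1})\subset(-\infty,q)\subset(-\infty,p_{2})$ are nested with a common end at $-\infty$. On the circle there is no analogue: an orientation-preserving homeomorphism of $\mathbb{S}^{1}$ fixing only the point $q$ preserves no proper arc (its restriction to $\mathbb{S}^{1}\setminus\{q\}\cong\mathbb{R}$ can send any point to any other point), so after applying $\varphi(g_{2})\in\varphi(G_{y})$ you have lost all control, and if you track the sets you find $\varphi(G_{\theta})\varphi(G_{y})\varphi(G_{\theta})\cdot r=\mathbb{S}^{1}$ for every admissible choice of $r$, $\theta$, $y$ --- no confinement is produced. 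Indeed the action is transitive from the start: by Kerekjarto's theorem and continuity, $\varphi(\mathrm{SO}(2))$ is conjugate to the full rotation group (this also repairs your vague ``rule out an infinite proper closed invariant set arc by arc'' step, which as written does not reduce to Theorem \ref{actionsdersurr}, since the complementary arcs of such a set are permuted, not preserved). The clean way to close the gap is the normalization the paper itself performs in Section 4 for the annulus: conjugate so that $\varphi(R_{\alpha})=R_{\epsilon\alpha}$ for all $\alpha$, with $\epsilon=\pm 1$ (the only injective continuous endomorphisms of $\mathrm{SO}(2)$ are $\alpha\mapsto\pm\alpha$); then $F_{\theta+\alpha}=F_{\theta}+\epsilon\alpha$, and if $F_{\theta}$ contained two points $k_{1}\neq k_{2}=k_{1}+\beta$ with $\beta\neq 0$, then $k_{2}\in F_{\theta}\cap F_{\theta+\epsilon\beta}$, contradicting the pairwise disjointness you have already established. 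With that substitution for the crux, your outline becomes a complete proof.
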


\section{Actions on the annulus}

This section is devoted to the proof of Theorem \ref{cerclesursurfaces} in the case of morphisms with values in the group of homeomorphisms of the annulus. Fix a morphism $\varphi: \mathrm{Homeo}_{0}(\mathbb{S}^{1}) \rightarrow \mathrm{Homeo}(\mathbb{A})$. Recall that, by Theorem 4 and Proposition 2 in \cite{RS}, such a group morphism is necessarily continuous.

First, as the group $\mathrm{Homeo}_{0}(\mathbb{S}^{1})$ is simple (see \cite{Fis}), such a group morphism is either trivial or one-to-one. We assume in the rest of this section that this group morphism is one-to-one. Recall that the induced group morphism $\mathrm{Homeo}_{0}(\mathbb{S}^{1}) \rightarrow \mathrm{Homeo}(\mathbb{A})/\mathrm{Homeo}_{0}(\mathbb{A})$ is not one-to-one as the source is uncountable and the target is countable. Therefore, it is trivial and the image of the morphism $\varphi$ is contained in the group $\mathrm{Homeo}_{0}(\mathbb{A})$.

Now, recall that such  the subgroup of rotations of the circle is continuously isomorphic to the topological group $\mathbb{S}^{1}$. The image of this subgroup under the morphism $\varphi$ is a compact subgroup of the group of homeomorphisms of the closed annulus which is continuously isomorphic to $\mathbb{S}^{1}$. It is known that such a subgroup is conjugate to the rotation subgroup $\left\{ (r, \theta) \mapsto (r, \theta+\alpha), \alpha \in \mathbb{S}^{1} \right\}$ of the group of homeomorphisms of the annulus (see \cite{Kol}). This is the only place in this proof where we really need the continuity hypothesis. After possibly conjugating $\varphi$, we may suppose from now on that, for any angle $\alpha$, the morphism $\varphi$ sends the rotation of angle $\alpha$ of the circle to the rotation of angle $\alpha$ of the annulus.

\subsection{An invariant lamination}

Fix a point $\theta_{0}$ on the circle. Recall that the group $G_{\theta_{0}}$ is the group of homeomorphisms of the circle which fix a neighbourhood of the point $\theta_{0}$. Let us denote by $F_{\theta_{0}} \subset \mathbb{A}$ the closed subset of fixed points of $\varphi(G_{\theta_{0}})$ (\emph{i.e.} points which are fixed under every homeomorphism in this group). The action $\varphi$ induces actions on the two boundary components of the annulus which are circles. The action of $\varphi(G_{\theta_{0}})$ on each of these boundary components admits a fixed point by Proposition \ref{actionsdessurs}. Therefore, the set $F_{\theta_{0}}$ is non-empty. For any angle $\theta$, let $\alpha=\theta-\theta_{0}$. Then $G_{\theta}=R_{\alpha}G_{\theta_{0}}R_{\alpha}^{-1}$, where $R_{\alpha}$ denotes the rotation of the circle of angle $\alpha$. Therefore, $\varphi(G_{\theta})=R_{\alpha}\varphi(G_{\theta_{0}})R_{\alpha}^{-1}$, where $R_{\alpha}$ denotes here by abuse the rotation of angle $\alpha$ of the annulus, and $F_{\theta}=R_{\alpha}(F_{\theta_{0}})$. Let
$$B= \bigcup_{\theta \in \mathbb{S}^{1}}F_{\theta}=\bigcup_{\alpha \in \mathbb{S}^{1}}R_{\alpha}(F_{\theta_{0}}).$$
This set is of the form $B= K \times \mathbb{S}^{1}$, where $K$ is the image of $F_{\theta_{0}}$ under the projection $\mathbb{A}=[0,1] \times \mathbb{S}^{1} \rightarrow [0,1]$. The set $K$ is compact and contains the points $0$ and $1$. Let $\mathcal{F}$ be the foliation of the set $B$ whose leaves are the circles of the form $\left\{r \right\} \times \mathbb{S}^{1}$, where the real number $r$ belongs to the compact set $K$.

\begin{lemma} \label{fol}
Each leaf of the lamination $\mathcal{F}$ is preserved by the action $\varphi$.
\end{lemma}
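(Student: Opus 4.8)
The plan is to extract from the fixed-point data two transverse invariant structures on $B$ and then play them against one another. First I would record that $fG_{\theta}f^{-1}=G_{f(\theta)}$, so $\varphi(f)\varphi(G_\theta)\varphi(f)^{-1}=\varphi(G_{f(\theta)})$ and hence $\varphi(f)(F_\theta)=F_{f(\theta)}$; in particular $B=\bigcup_\theta F_\theta$ is $\varphi$-invariant. Next, the sets $F_\theta$ are pairwise disjoint: a common point of $F_\theta$ and $F_{\theta'}$ with $\theta\neq\theta'$ would be fixed by the group generated by $\varphi(G_\theta)$ and $\varphi(G_{\theta'})$, which is all of $\varphi(\mathrm{Homeo}_0(\mathbb{S}^1))$ since $G_\theta$ and $G_{\theta'}$ generate $\mathrm{Homeo}_0(\mathbb{S}^1)$ by fragmentation on the circle, whereas each rotation $\varphi(R_\alpha)$ acts without fixed point. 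Now fix $r\in K$ and set $C=F_{\theta_0}\cap(\{r\}\times\mathbb{S}^1)$, which is non-empty since $r$ lies in the projection of $F_{\theta_0}$. As the leaf $\{r\}\times\mathbb{S}^1$ is rotation-invariant and $F_\theta=R_{\theta-\theta_0}(F_{\theta_0})$, the sets $R_\alpha(C)$, $\alpha\in\mathbb{S}^1$, are pairwise disjoint and cover $\{r\}\times\mathbb{S}^1$; an elementary difference-set argument (if $p,q\in C$ differ by $\beta\neq 0$ then $R_\beta(C)\cap C\neq\emptyset$) then forces $C$ to be a single point. Thus $F_{\theta_0}$ is the graph of a continuous section $s\colon K\to\mathbb{S}^1$, and after conjugating by a rotation-commuting, leaf-preserving homeomorphism of $\mathbb{A}$ straightening $s$, I may assume $F_\theta=K\times\{\theta\}$ for every $\theta$.

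In these coordinates $\varphi(f)$ carries the section $K\times\{\theta\}$ to $K\times\{f(\theta)\}$, so on $B$ it has the form $\varphi(f)(r,\theta)=(\eta_{f,\theta}(r),f(\theta))$, where each $\eta_{f,\theta}$ is an increasing self-homeomorphism of $K$ fixing $0$ and $1$ (the two boundary circles being individually preserved, as $\varphi$ takes values in $\mathrm{Homeo}_0(\mathbb{A})$). These maps satisfy the cocycle relation $\eta_{fg,\theta}=\eta_{f,g(\theta)}\circ\eta_{g,\theta}$, together with $\eta_{R_\alpha,\theta}=\mathrm{id}$ and $\eta_{f,\theta}=\mathrm{id}$ whenever $f\in G_\theta$ (since $K\times\{\theta\}=F_\theta$ is then fixed pointwise). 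In this language, $\varphi(f)$ maps leaves to leaves precisely when $\theta\mapsto\eta_{f,\theta}$ is constant, and it preserves \emph{every} leaf precisely when all $\eta_{f,\theta}=\mathrm{id}$.

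The endgame, once it is known that every $\varphi(f)$ maps leaves to leaves, is soft: one obtains a continuous homomorphism $\rho\colon\mathrm{Homeo}_0(\mathbb{S}^1)\to\mathrm{Homeo}(K)$ recording the induced permutation of leaves, its kernel is a normal subgroup containing every rotation, hence is non-trivial, so by simplicity of $\mathrm{Homeo}_0(\mathbb{S}^1)$ it is the whole group. That is exactly the statement that every leaf is preserved.

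The crux is therefore the first equivalence above: showing that $\theta\mapsto\eta_{f,\theta}$ is constant, i.e. that $\varphi(f)$ cannot \emph{shear} heights across the sections. When $K$ is totally disconnected the leaves are exactly the connected components of $B$, so they are permuted automatically; the genuine difficulty occurs over a non-degenerate component $[a,b]\times\mathbb{S}^1$ of $B$. Here I would analyse the action of $G_\theta\cong\mathrm{Homeo}_c(\mathbb{R})$ on such a sub-annulus, using Proposition \ref{actionsdersurs} and Proposition \ref{actionsdessurs} on its two boundary circles and Theorem \ref{actionsdersurr} along the sections, in order to pin down the invariant spanning curves of $\varphi(G_\theta)$. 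If some leaf were sheared, I expect to reach a contradiction by the accumulation technique used in the proof of the Proposition above: iterating $\varphi(f)$ and composing with rotations would push the images of a transverse curve to accumulate on a full circle $\{r_\infty\}\times\mathbb{S}^1$, which is impossible for the homeomorphic image of a single leaf. This rigidity step, rather than the formal reductions, is where essentially all of the work lies.
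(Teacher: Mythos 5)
There is a genuine gap. Your reductions (equivariance $\varphi(f)(F_{\theta})=F_{f(\theta)}$, disjointness of the $F_{\theta}$, the singleton intersection $F_{\theta_{0}}\cap(\{r\}\times\mathbb{S}^{1})$ via the difference-set argument, the straightening and the cocycle $\eta_{f,\theta}$) are all correct, but they only reformulate the statement: everything is made to rest on the claim that $\theta\mapsto\eta_{f,\theta}$ is constant (equivalently, that $\varphi(f)$ cannot shear across a non-degenerate component $[a,b]\times\mathbb{S}^{1}$ of $B$), and at that point you write ``I expect to reach a contradiction by the accumulation technique.'' That is precisely the content of the lemma, and it is not proved; an unexecuted strategy involving prime ends and accumulation on a circle $\{r_{\infty}\}\times\mathbb{S}^{1}$ is not a substitute, and nothing in your setup guarantees that such an accumulation would actually occur for a single sheared leaf.

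The step you are missing is much softer than you anticipate, and it is where the continuity of $\varphi$ (already secured via Rosendal--Solecki) does all the work. Fix $x\in F_{\theta}$ and any $g\in\mathrm{Homeo}_{0}(\mathbb{S}^{1})$, and write $g=R_{\beta}f$ with $f(\theta)=\theta$. A homeomorphism $f$ fixing the \emph{point} $\theta$ is a uniform limit of homeomorphisms fixing a \emph{neighbourhood} of $\theta$, i.e.\ of elements of $G_{\theta}$; since $\varphi$ is continuous and $F_{\theta}$ is closed, $\varphi(f)$ fixes $F_{\theta}$ pointwise, so $\varphi(f)(x)=x$. Hence $\varphi(g)(x)=\varphi(R_{\beta})(x)=R_{\beta}(x)$, which lies on the same circle $\{r\}\times\mathbb{S}^{1}$ as $x$ (here one uses the normalization $\varphi(R_{\beta})=R_{\beta}$ made at the start of the section). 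Since $B=\bigcup_{\theta}F_{\theta}$, every point of every leaf lies in some $F_{\theta}$, and its full orbit is contained in its rotation orbit, i.e.\ in its leaf. In your notation this says directly that $\eta_{g,\theta}=\mathrm{id}$ for all $g$ and $\theta$, so the case analysis over components of $K$, the homomorphism $\rho$ to $\mathrm{Homeo}(K)$, and the simplicity argument are all unnecessary. (Your singleton-intersection argument is a nice independent route to part of the next lemma in the paper, but it does not advance the present one.)
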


\begin{proof}
Fix an angle $\theta$. Let us prove that, for any point $x$ in $F_{\theta}$ the orbit of the point $x$ under the action $\varphi$ is contained in $\left\{ R_{\alpha}(x), \alpha \in \mathbb{S}^{1} \right\} \subset \mathbb{A}$.
First, for any orientation-preserving homeomorphism $f$ of the circle which fixes the point $\theta$, the homeomorphism $\varphi(f)$ pointwise fixes the set $F_{\theta}$. Indeed, such a homeomorphism of the circle is the uniform limit of homeomorphisms which pointwise fix a neighbourhood of $\theta$ and the claim results from the continuity of the action $\varphi$.
Any orientation-preserving homeomorphism of the circle $g$ can be written $g=R_{\beta}f$, where $f$ is a homeomorphism which fixes the point $\theta$. Now, any point $x$ in $F_{\theta}$ is fixed under $\varphi(f)$ and and sent to a point in $\left\{ R_{\alpha}(x), \alpha \in \mathbb{S}^{1} \right\} \subset \mathbb{A}$ under the rotation $R_{\beta}=\varphi(R_{\beta})$, which proves the lemma.
\end{proof}

\begin{lemma} \label{fol2}
Each closed set $F_{\theta}$ intersects each leaf of the lamination $\mathcal{F}$ in exactly one point. Moreover, the map $h$ which, to any point $(r, \theta)$ of $K \times \mathbb{S}^{1} \subset \mathbb{A}$, associates the only point of $F_{\theta}$ on the leaf $\left\{ r \right\} \times \mathbb{S}^{1}$ is a homeomorphism of $K \times \mathbb{S}^{1}$. This homeomorphism conjugates the restriction of the action $\varphi_{K,\lambda}$ to $K \times \mathbb{S}^{1}$ to the restriction of the action $\varphi$ to $K \times \mathbb{S}^{1}$ for any continuous map $\lambda: [0,1]-K \rightarrow \left\{ -1,+1 \right\}$. Moreover, this homeomorphism is of the form $(r,\theta) \mapsto (r,\eta(r)+\theta)$ where $\eta: K \rightarrow \mathbb{S}^{1}$ is a continuous function.
\end{lemma}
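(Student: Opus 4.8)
The plan is to analyze $\varphi$ leaf by leaf. For $r \in K$ write $L_r = \{r\} \times \mathbb{S}^1$ and identify $L_r$ with $\mathbb{S}^1$ through the angular coordinate. By Lemma \ref{fol} each $L_r$ is invariant under the whole action, so $\varphi$ induces a morphism $\mathrm{Homeo}_0(\mathbb{S}^1) \to \mathrm{Homeo}(L_r)$. Since $\varphi$ was normalized so that $\varphi(R_\alpha)$ is the rotation $(r,\theta) \mapsto (r, \theta+\alpha)$, this induced action sends each $R_\alpha$ to the rotation by $\alpha$ of $L_r$, so it is non-trivial. By Proposition \ref{actionsdessurs} there is a homeomorphism $g_r$ of the circle with $\varphi(f)|_{L_r} = g_r \circ f \circ g_r^{-1}$ for every $f$. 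Taking $f = R_\alpha$ and using the normalization gives $R_\alpha = g_r R_\alpha g_r^{-1}$ for all $\alpha$; a homeomorphism of $\mathbb{S}^1$ commuting with every rotation is itself a rotation, so $g_r = R_{\eta(r)}$ for a well-defined $\eta(r) \in \mathbb{S}^1$.

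From this I read off the first assertion. The common fixed-point set of $G_{\theta_0}$ in the standard circle action is exactly $\{\theta_0\}$ (any other point is moved by some element supported away from $\theta_0$). Conjugating by $g_r$, the fixed-point set of $\varphi(G_{\theta_0})|_{L_r} = g_r G_{\theta_0} g_r^{-1}$ is the single point $g_r(\theta_0) = \theta_0 + \eta(r)$; as $L_r$ is $\varphi$-invariant this set is precisely $F_{\theta_0} \cap L_r$. Hence $F_{\theta_0}$ meets each leaf in exactly one point and equals the graph $\{(r, \theta_0 + \eta(r)) : r \in K\}$. Applying $R_{\theta - \theta_0}$ (recall $F_\theta = R_{\theta-\theta_0}(F_{\theta_0})$) shows $F_\theta$ meets $L_r$ in the single point $(r, \theta + \eta(r))$, so the map of the statement is exactly $h(r,\theta) = (r, \theta + \eta(r))$, which is of the announced form.

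The step I expect to require genuine (though still soft) argument is the continuity of $\eta$; everything else is formal once Proposition \ref{actionsdessurs} is in hand. Here I would argue by compactness: $F_{\theta_0}$ is closed, hence compact, and is the graph of $r \mapsto \theta_0 + \eta(r)$ over the compact set $K$ into the compact circle. If $r_n \to r$ in $K$, then any subsequential limit of $(r_n, \theta_0 + \eta(r_n))$ lies in $F_{\theta_0}$ over the leaf $L_r$, hence equals $(r, \theta_0 + \eta(r))$; therefore $\eta(r_n) \to \eta(r)$ and $\eta$ is continuous. Consequently $h$ is a continuous bijection of $K \times \mathbb{S}^1$ with continuous inverse $(r,\theta) \mapsto (r, \theta - \eta(r))$, so it is a homeomorphism.

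Finally, for the conjugacy claim it suffices to observe that on $K \times \mathbb{S}^1$ the model map $\varphi_{K,\lambda}(f)$ is the product $(r,\theta) \mapsto (r, f(\theta))$, independent of $\lambda$, and that $h$ restricts on each leaf $L_r$ to the rotation $R_{\eta(r)} = g_r$. A one-line computation using $\varphi(f)|_{L_r} = R_{\eta(r)} \circ f \circ R_{\eta(r)}^{-1}$ then yields $h \circ \varphi_{K,\lambda}(f) = \varphi(f) \circ h$ on $K \times \mathbb{S}^1$ for every $f$, which is the asserted conjugacy (valid for any choice of $\lambda$).
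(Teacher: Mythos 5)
Your proof is correct and follows essentially the same route as the paper: work leaf by leaf using Lemma \ref{fol} and Proposition \ref{actionsdessurs}, get continuity from the closed-graph/compactness argument, and deduce the conjugacy from equivariance. The only (minor, harmless) variation is that you identify the leafwise conjugator as a rotation $R_{\eta(r)}$ up front via the ``commutes with all rotations'' observation, whereas the paper extracts the form $(r,\theta)\mapsto(r,\eta(r)+\theta)$ at the end from the relation $F_{\theta}=R_{\theta-\theta_{0}}(F_{\theta_{0}})$.
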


\begin{proof}
By Lemma \ref{fol}, the action $\varphi$ preserves each set of the form $\left\{r \right\}\times \mathbb{S}^{1}$ and induces on each of these set an action on a circle. By Proposition \ref{actionsdessurs}, such actions are well-understood and the restriction of this action to a subgroup of the form $G_{\theta}$ admits exactly one fixed point (this action is non-trivial as the rotation subgroup acts non-trivially). This implies the first statement of the lemma.

 Take a sequence $(r_{n})_{n \in \mathbb{N}}$ of elements in $K$ which converges to a real number $r$ in $K$. Then, as the point $h(r_{n}, \theta_{0})$ belongs to $\left\{ r_{n} \right\} \times \mathbb{S}^{1}$ and to $F_{\theta_{0}}$, the accumulation points of the sequence $(h(r_{n},\theta_{0}))_{n}$ belong to $\left\{ r \right\} \times \mathbb{S}^{1}$ and to $F_{\theta_{0}}$: there is only one accumulation point which is the point $h(r,\theta_{0})$. For any angle $\theta$, we have $h(r,\theta)=R_{\theta-\theta_{0}}\circ h(r,\theta_{0})$. This implies that the map $h$ is continuous. This map is one-to-one: if $h(r,\theta)=h(r',\theta')$, this last point belongs to $\left\{r \right\} \times \mathbb{S}^{1}=\left\{r' \right\} \times \mathbb{S}^{1}$ so that $r=r'$ and $R_{\theta-\theta_{0}}\circ h(r,\theta_{0})=R_{\theta'-\theta_{0}}\circ h(r,\theta_{0})$ so that $\theta=\theta'$. This map is onto by definition of the set $B=K \times \mathbb{S}^{1}$ which is the union of the $F_{\theta}$. As this map is defined on a compact set, it is a homeomorphism.

Take any orientation-preserving homeomorphism $f$ of the circle with $f(\theta)=\theta'$. Notice that $G_{\theta'}=f G_{\theta}f^{-1}$. Therefore $\varphi(G_{\theta'})=\varphi(f) \varphi(G_{\theta}) \varphi(f)^{-1}$ and $\varphi(f)(F_{\theta})=F_{\theta'}$. So, for any $(r,\theta) \in K \times \mathbb{S}^{1}$, as the action $\varphi$ preserves each leaf of the lamination $\mathcal{F}$, the homeomorphism $\varphi(f)$ sends the point $h(r,\theta)$, which is the unique point in the intersection $F_{\theta} \cap \left\{ r \right\} \times \mathbb{S}^{1}$, to the unique point in the intersection $F_{\theta'} \cap \left\{ r \right\} \times \mathbb{S}^{1}$, which is $h(r,\theta')$. This implies that
$$ \varphi(f)\circ h(r,\theta)= h \circ \varphi_{K,\lambda}(f)(r,\theta).$$
Now, denote by $\eta(r)$ the second projection of $h(r,0)$. As the set $F_{\theta}$ is the image by the rotation of angle $\theta$ of the set $F_{0}$, we have $h(r,\theta)=h(r,0)+(0,\theta)$ and $h(r,\theta)=(r,\eta(r)+\theta)$.
\end{proof}

\subsection{Action outside the lamination}

In this section, we study the action $\varphi$ on each connected component of $\mathbb{A}-K \times \mathbb{S}^{1}$. Let $A=[r_{1},r_{2}] \times \mathbb{S}^{1}$ be the closure of such a component. By the last subsection, such a set is invariant under the action $\varphi$. This subsection is dedicated to the proof of the following proposition.

\begin{proposition} \label{outside}
The restriction $\varphi^{A}$ of the action $\varphi$ to $A$ is conjugate to $a_{+}$ (or equivalently to $a_{-}$) via an orientation preserving homeomorphism.
\end{proposition}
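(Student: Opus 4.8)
The plan is to leverage the rotation-equivariance already obtained—$\varphi$ sends $R_{\alpha}$ to the rotation by $\alpha$ of the annulus—together with the one-dimensional rigidity of Theorem \ref{actionsdersurr}. Write $C_{1}=\{r_{1}\}\times\mathbb{S}^{1}$, $C_{2}=\{r_{2}\}\times\mathbb{S}^{1}$ and $H=\varphi(G_{\theta_{0}})$. First I would normalise. By Lemma \ref{fol2} the conjugating homeomorphism on the lamination has the form $(r,\theta)\mapsto(r,\eta(r)+\theta)$; extending $\eta$ continuously over $[r_{1},r_{2}]$ and using the same formula on $A$ (a homeomorphism commuting with rotations), I may assume that the actions induced on $C_{1}$ and $C_{2}$ are the standard ones $f\cdot(r_{i},\theta)=(r_{i},f(\theta))$. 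Since $F_{\theta_{0}}\subset K\times\mathbb{S}^{1}$ and the interior of $A$ is disjoint from $K\times\mathbb{S}^{1}$, the fixed-point set of $H$ in $A$ is exactly $\{p_{1},p_{2}\}$ with $p_{i}=(r_{i},\theta_{0})$, and each $C_{i}\setminus\{p_{i}\}$ is a single $H$-orbit.

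The decisive and hardest step is to produce the $H$-invariant arcs. In the model $a_{+}$ the group $G_{\theta_{0}}$ preserves the two properly embedded arcs $\{(r,\theta_{0})\}$ and $\{(r,\theta_{0}-r)\}$ joining $p_{1}$ to $p_{2}$, which cut $A$ into two open discs on which it acts transitively; I would recover them dynamically as the frontier between the open $H$-orbits in the interior of $A$ (equivalently, the locus where the $H$-orbit fails to be open), and must show there are exactly two, that each is an embedded arc from $p_{1}$ to $p_{2}$ meeting every circle $\{r\}\times\mathbb{S}^{1}$ once, and that the two complementary regions are single orbits. The indispensable input is that $A$ contains no $\varphi$-invariant essential circle in its interior—precisely the statement that the lamination $\mathcal{F}$ touches $A$ only along $C_{1}\cup C_{2}$—which is what should forbid the orbit picture from degenerating. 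Granting such an arc $\gamma$, its interior is homeomorphic to $\mathbb{R}$, and $H\cong\mathrm{Homeo}_{c}(\mathbb{R})$ acts on it continuously with no global fixed point (its only fixed points $p_{1},p_{2}$ are the endpoints); Theorem \ref{actionsdersurr} then provides an $H$-equivariant identification of $\gamma$ with the model arc $\mathbb{S}^{1}\setminus\{\theta_{0}\}$, through which a point $z_{0}\in\gamma$ acquires a well-defined second marked point $\xi_{0}\in\mathbb{S}^{1}\setminus\{\theta_{0}\}$.

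The assembly is then essentially formal. Every orbit of $\varphi$ is rotation-invariant, because $R_{\alpha}=\varphi(R_{\alpha})$ already belongs to the image, so every orbit is a union of the circles $\{r\}\times\mathbb{S}^{1}$. Since $\gamma$ joins $p_{1}$ to $p_{2}$ its projection to $[r_{1},r_{2}]$ is onto by connectedness and $H$ is transitive on $\gamma\setminus\{p_{1},p_{2}\}$; hence the orbit of $z_{0}$ meets every circle and, being a union of circles, equals the whole interior of $A$, so $\varphi$ is transitive there. Uniqueness of the invariant arc (up to the two model choices) shows $\varphi(f)$ carries $\gamma$ to the invariant arc attached to $G_{f(\theta_{0})}$, namely $R_{f(\theta_{0})-\theta_{0}}\gamma$; combined with the identification of $\gamma$ this yields two equivariant maps $A\to\mathbb{S}^{1}$ recording $\theta_{0}$ and $\xi_{0}$. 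Consequently the stabiliser of $z_{0}$ is exactly the group of homeomorphisms fixing the two circle points $\theta_{0}$ and $\xi_{0}$ (the inclusion $\subseteq$ uses the two equivariant maps; the inclusion $\supseteq$ uses that the homeomorphisms fixing a neighbourhood of $\{\theta_{0},\xi_{0}\}$ lie in $H$ and fix $z_{0}$, together with the continuity of $\varphi$), which is precisely the stabiliser of the corresponding point for $a_{\mathbb{T}^{2}}(f)(x,y)=(f(x),f(y))$. The orbit map $\varphi(f)(z_{0})\mapsto a_{\mathbb{T}^{2}}(f)$ therefore descends to an equivariant bijection of $A$ onto the region of $\mathbb{T}^{2}$ between two parallel diagonals; it is a homeomorphism by continuity of the actions and compactness, and it conjugates $\varphi^{A}$ to the restriction of $a_{\mathbb{T}^{2}}$, which is $a_{-}$. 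Full equivariance for all of $\mathrm{Homeo}_{0}(\mathbb{S}^{1})$ follows since the rotations together with $G_{\theta_{0}}$ generate a dense subgroup—using, as in the proof of Lemma \ref{fol}, that any homeomorphism fixing $\theta_{0}$ is a uniform limit of elements of $G_{\theta_{0}}$—and $\varphi$ is continuous.

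Finally, to obtain the statement as phrased I would recall that $a_{-}$ and $a_{+}$ are conjugate through the orientation-preserving homeomorphism $(r,\theta)\mapsto(r,\theta+r)$, so the conjugacy can be taken orientation-preserving. In short, once the invariant arcs are in hand everything reduces to continuity and Theorem \ref{actionsdersurr}, so I expect essentially all the difficulty to lie in the existence, uniqueness and topological control of these arcs—showing there are exactly two, each embedded, joining $p_{1}$ to $p_{2}$ and meeting each circle once, with transitive action on the complementary regions—for which the absence of an interior invariant circle is the key leverage.
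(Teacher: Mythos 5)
There is a genuine gap, and you have in fact located it yourself: the entire construction of the invariant arcs is deferred (``Granting such an arc $\gamma$\dots''), and the mechanism you propose for producing them does not suffice. You suggest recovering the arcs as the frontier between open $H$-orbits in the interior of $A$, with the absence of an interior invariant essential circle as the ``indispensable input''. But nothing in what precedes guarantees that $H$-orbits in the interior are open anywhere, nor that the locus where they fail to be open is an embedded arc from $p_{1}$ to $p_{2}$ meeting each circle $\{r\}\times\mathbb{S}^{1}$ once --- a priori it could be an essential invariant continuum that is not locally connected, or could have both endpoints on the same boundary component (the paper itself flags this last possibility as unresolved at the corresponding stage of its argument). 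The absence of an invariant circle in the interior is necessary but far from sufficient to rule these degeneracies out.

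The paper's proof spends almost all of its effort exactly here, and the tools it uses are absent from your proposal: (i) a uniform bound on the diameters of images of a fundamental domain under lifts of $\varphi(f)$ to the band $[r_{1},r_{2}]\times\mathbb{R}$, proved by a word-length/distortion argument (Lemma \ref{domfondann}, relying on Lemma 4.4 of \cite{Mil}); (ii) a lift of $\varphi^{A}_{|G_{\theta_{0}}}$ to the band with a bounded orbit (Lemma \ref{lift}, using perfectness of $G_{\theta_{0}}$); (iii) the prime-end compactification of an invariant open set, continuity of the induced circle action, and the non-existence of compact connected invariant sets in the open band (Lemmas \ref{transprime} and \ref{invariantcompact}), which together produce the points $\tilde{x}_{\theta}$ as principal points of fixed prime ends; and (iv) the injectivity/monotonicity analysis of Lemma \ref{pairdis} showing the resulting set is an embedded arc meeting each circle exactly once and that the arcs $L_{\theta}$ are pairwise disjoint. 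Your final assembly --- defining the conjugacy via the unique fixed point of $\varphi^{A}_{|G_{\theta}\cap G_{\theta+r}}$ on the arc and invoking Theorem \ref{actionsdersurr} and rotation-equivariance --- does match the paper's endgame, but it only becomes available once the arcs exist with the stated topological control, which is the actual content of the proposition.
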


\begin{proof}
 Notice that, for any point $\theta$ of the circle, the action $\varphi^{A}_{|G_{\theta}}$ admits no fixed point in the interior of $A$ by definition of $K \times \mathbb{S}^{1}$: we will often use this fact.

Let us begin by sketching the proof of this proposition. We prove that, for any point $\theta$ of the circle, the morphism $\varphi^{A}_{| G_{\theta}}$ can be lifted to a morphism $\tilde{\varphi}^{A}_{\theta}$ from the group $G_{\theta}$ to the group $\mathrm{Homeo}_{\mathbb{Z}}([r_{1},r_{2}] \times \mathbb{R})$ of homeomorphisms of the closed band which commute to the translations $(r,x) \mapsto (r,x+n)$, where $n$ is an integer. Moreover, this group morphism can be chosen so that it has a bounded orbit. We will then find a continuum $\widetilde{G_{\theta}}$ with empty interior which touches both boundary components of the band and is invariant under the action $\tilde{\varphi}^{A}_{\theta}$. Then we prove that the sets $G_{\theta}= \pi(\widetilde{G_{\theta}})$, where $\pi: [r_{1},r_{2}] \times \mathbb{R} \rightarrow A$ is the projection, are pairwise disjoint and are simple paths which join the two boundary components of the annulus $A$. We see that the group $\varphi(G_{\theta} \cap G_{\theta'})$, for $\theta \neq \theta'$, admits a unique fixed point $a(\theta,\theta')$ on $G_{\theta}$. This last map $a$ turns out to be continuous and allows us to build a conjugacy between $\varphi^{A}$ and $a_{+}$.

The following lemma is necessary to build the invariant sets $G_{\theta}$:

For a homeomorphism $g$ in $\mathrm{Homeo}_{0}(A)$, we denote by $\tilde{g}$ the lift of $g$ to $\mathrm{Homeo}_{\mathbb{Z}}([r_{1},r_{2}] \times \mathbb{R})$  (this means that $\pi \circ \tilde{g}= g \circ \pi$) with $\tilde{g}((r_{1},0)) \in [r_{1},r_{2}] \times [-1/2,1/2)$. Let us denote by $D \subset \mathbb{R}^{2}$ the fundamental domain $[r_{1},r_{2}] \times [-1/2,1/2]$ for the action of $\mathbb{Z}$ on $[r_{1},r_{2}] \times \mathbb{R}$.

\begin{lemma}\label{domfondann}
The map $\mathrm{Homeo}_{0}(\mathbb{S}^{1}) \rightarrow \mathbb{R}_{+}$, which associates, to any homeomorphism $f$ in $\mathrm{Homeo}_{0}(\mathbb{S}^{1})$, the diameter of the image under $\widetilde{\varphi(f)}$ (or equivalently under any lift of $\varphi(f)$) of the fundamental domain $D$, is bounded.
\end{lemma}

\underline{Remark}: the continuity hypothesis on the morphism $\varphi$ is not used in the proof of this lemma. 

\begin{proof}
For a group $G$ generated by a finite set $S$ and for an element $g$ in $G$, we denote by $l_{S}(g)$ the word length of $g$ with respect to $S$, which is the minimal number of factors necessary to write $g$ as a product of elements of $S \cup S^{-1}$, where $S^{-1}$ is the set of inverses of elements in $S$. In order to prove the lemma, we need the following result which can be easily deduced from Lemma 4.4 in \cite{Mil}, which is inspired from \cite{Avi}.

\begin{lemma}
There exist constants $C>0$ and $C'\in \mathbb{R}$ such that, for any sequence $(f_{n})_{n \in \mathbb{N}}$ of elements of the group $\mathrm{Homeo}_{0}(\mathbb{S}^{1})$, there exists a finite set $S \subset \mathrm{Homeo}_{0}(\mathbb{S}^{1})$ such that:
\begin{itemize}
\item for any integer $n$, the element $f_{n}$ belongs to the group generated by $S$;
\item $\forall n \in \mathbb{N}, \ l_{S}(f_{n}) \leq C log(n)+C'$.
\end{itemize}
\end{lemma}

Let us prove now Lemma \ref{domfondann} by contradiction. Suppose that there exists a sequence $(f_{n})_{n \in \mathbb{N}}$ of elements of the group $\mathrm{Homeo}_{0}(\mathbb{S}^{1})$ such that, for any integer $n$:
$$ \mathrm{diam}(\widetilde{\varphi(f_{n})}(D)) \geq n.$$
We apply then the above lemma to this sequence to obtain a finite subset $S$ contained in $\mathrm{Homeo}_{0}(\mathbb{S}^{1})$ such that the conclusion of the lemma holds. Let $f_{n}=s_{1,n}  s_{2,n}  \ldots s_{w_{n},n}$, where $w_{n} \leq Clog(n)+C'$ and the $s_{i,j}$'s are elements of $S \cup S^{-1}$. We prove now that this implies that the diameter of $\widetilde{\varphi(f_{n})(D)}$ grows at most at a logarithmic speed, which is a contradiction with the hypothesis we made on the sequence $(f_{n})_{n}$. Let us denote by $M$ the maximum of the quantities $\left\| \widetilde{\varphi(s)}(x)-x \right\|$, where $s$ varies over $S \cup S^{-1}$ and the point $x$ varies over $\mathbb{R}^{2}$ (or equivalently over the compact set $D$) and where $\left\| . \right\|$ denotes the euclidean norm. Take now two points $x$ and $y$ in $D$. Then for any integer $n$
\small{$$\begin{array}{l}
\left\| \widetilde{\varphi(s_{1,n})} \widetilde{\varphi(s_{2,n})}  \ldots  \widetilde{\varphi(s_{w_{n},n})}(x)- \widetilde{\varphi(s_{1,n})} \widetilde{\varphi(s_{2,n})}  \ldots  \widetilde{\varphi(s_{w_{n},n})}(y) \right\| \leq \\
 \left\|\widetilde{\varphi(s_{1,n})}  \widetilde{\varphi(s_{2,n})}  \ldots  \widetilde{\varphi(s_{w_{n},n})}(x)-x \right\| + \left\|\widetilde{\varphi(s_{1,n})}  \widetilde{\varphi(s_{2,n})}  \ldots \widetilde{\varphi(s_{w_{n},n})}(y)-y \right\| + \left\|x-y\right\|.
\end{array}$$}
\normalsize{But for any point $z$ in $D$, we have:}
\small{$$\begin{array}{rcl}
\left\|\widetilde{\varphi(s_{1,n})}  \widetilde{\varphi(s_{2,n})}  \ldots  \widetilde{\varphi(s_{w_{n},n})}(z)-z \right\| & \leq & \sum \limits_{k=1}^{w_{n}-1} \left\|\widetilde{\varphi(s_{k,n})} \ldots \widetilde{\varphi(s_{w_{n},n})}(z)-\widetilde{\varphi(s_{k+1,n})} \ldots \widetilde{\varphi(s_{w_{n},n})}(z) \right\| \\
 & \leq & (w_{n}-1)M.
\end{array}
$$}
\normalsize{Hence 
$$\mathrm{diam}(\widetilde{\varphi(s_{1,n})}  \widetilde{\varphi(s_{2,n})}  \ldots  \widetilde{\varphi(s_{w_{n},n})}(D))=\mathrm{diam}(\widetilde{\varphi(f_{n})}(D)) \leq 2(w_{n}-1)M+\mathrm{diam}(D),$$ which is in contradiction with the hypothesis we made on the sequence $(f_{n})_{n}$.}
\end{proof}

Let $\theta_{0}$ be a point of the circle.

\begin{lemma} \label{lift}
There exists a group morphism $\tilde{\varphi}^{A}_{\theta_{0}} : G_{\theta_{0}} \rightarrow \mathrm{Homeo}_{\mathbb{Z}}([r_{1},r_{2}] \times \mathbb{R})$ such that:
\begin{itemize}
\item for any homeomorphism $f$ in $G_{\theta_{0}}$, $\Pi \circ \tilde{\varphi}^{A}_{\theta_{0}}(f)= \varphi^{A}(f)$, where $\Pi:\mathrm{Homeo}_{\mathbb{Z}}([r_{1},r_{2}] \times \mathbb{R}) \rightarrow \mathrm{Homeo}_{0}(A)$ is the projection;
\item the subset $\left\{\tilde{\varphi}^{A}_{\theta_{0}}(f)((r_{1},0)), f \in G_{\theta_{0}} \right\}$ of the band $[r_{1},r_{2}] \times \mathbb{R}$ is bounded.
\end{itemize}
Moreover, the morphism $\tilde{\varphi}^{A}_{\theta_{0}}$ is continuous.
\end{lemma}

\underline{Remark}: The continuity hypothesis on the morphism $\varphi$ is not necessary for the first part of this lemma. However, we will use it to simplify the proof.

\begin{proof}
 As the topological space $G_{\theta_{0}}$ is contractible and the map $\Pi:\mathrm{Homeo}_{\mathbb{Z}}([r_{1},r_{2}] \times \mathbb{R}) \rightarrow \mathrm{Homeo}_{0}(A)$ is a covering, there exists a (unique) continuous map $\eta: G_{\theta_{0}} \rightarrow \mathrm{Homeo}_{\mathbb{Z}}([r_{1},r_{2}] \times \mathbb{R})$ which lifts the map $\varphi^{A}_{|G_{\theta_{0}}}$ and sends the identity to the identity. Then the map
$$\begin{array}{rcl}
G_{\theta_{0}} \times G_{\theta_{0}} & \rightarrow &  \mathrm{Homeo}_{\mathbb{Z}}([r_{1},r_{2}] \times \mathbb{R})\\
(f,g) & \mapsto & \eta(fg)^{-1} \eta(f) \eta(g)
\end{array}
$$
is continuous and its image is contained in the discrete space of integral translations: it is constant and the map $\eta$ is a group morphism. Two group morphisms which lift the group morphism $\varphi^{A}_{|G_{\theta_{0}}}$ differ by a morphism $G_{\theta_{0}} \rightarrow \mathbb{Z}$. However, as the group $G_{\theta_{0}}$ is simple (hence perfect), such a group morphism is trivial and $\eta= \tilde{ \varphi}^{A}_{\theta_{0}}$. The action $\varphi^{A}_{|G_{\theta_{0}}}$ admits fixed points on the boundary of the annulus $A$.  Hence, as the space $G_{\theta_{0}}$ is path-connected and $\eta(Id)=Id$, the action $\eta$ admits fixed points on the boundary of $[r_{1},r_{2}] \times \mathbb{R}$ and any orbit on one of these boundary components is bounded.
\end{proof}

Let
$$F= \overline{ \bigcup_{f \in G_{x_{0}}} \tilde{\varphi}^{A}_{\theta_{0}}(f)([r_{1},r_{2}] \times(-\infty,0])}.$$
By the two above lemmas, there exists $M>0$ such that $F \subset [r_{1},r_{2}] \times (-\infty,M]$. Moreover, the closed set $F$ is invariant under the action $\tilde{\varphi}^{A}_{\theta_{0}}$. Denote by $U$ the connected component of the complement of $F \cup \left\{r_{1},r_{2} \right\} \times \mathbb{R}$ which contains the open subset $(r_{1},r_{2}) \times (M,+\infty)$. By construction, the open set $U$ is invariant under the action $\tilde{\varphi}^{A}_{\theta_{0}}$ (the interior of a fundamental domain far on the right must be sent in $U$ by any homeomorphism in the image of $\tilde{\varphi}^{A}_{\theta_{0}}$, by the two above lemmas). 
Consider the topological space $B$ which is the disjoint union of the band $[r_{1},r_{2}] \times \mathbb{R}$ with a point $\left\{ + \infty \right\}$ and for which a basis of neighbourhood of the point $+\infty$ are given by the sets of the form $[r_{1},r_{2}] \times (A,+\infty) \cup \left\{ + \infty \right\}$. Now, let us consider the prime end compactification of $U \subset B$ (see \cite{Math} for the background on prime end compactification). The space of prime ends of the simply connected open set $U$, on which there is a natural action $\psi$ of the group $G_{\theta_{0}}$ induced by the action $\tilde{ \varphi}^{A}_{\theta_{0}}$,  is homeomorphic to a circle. By the following lemma, the action $\psi$ is continuous.

\begin{lemma}
Let $W$ be a simply connected relatively compact open subset of the plane. Let us denote by $B(W)$ the space of prime ends of $W$. The map 
$$t: \mathrm{Homeo}(\overline{W}) \rightarrow \mathrm{Homeo}(B(W)),$$
which, to a homeomorphism $f$ of $\overline{W}$, associates the induced homeomorphism on the space of prime ends of $W$, is continuous.
\end{lemma}

\begin{proof}
As the map $t$ is a group morphism and as the space $B(W)$ is homeomorphic to the circle, it suffices to prove that, for any prime end $\xi$ in $B(W)$, the map
$$\begin{array}{rcl}
\mathrm{Homeo}(\overline{W}) & \rightarrow & B(W) \\
f & \mapsto & t(f)(\xi)
\end{array}$$
is continuous at the identity. Let us fix such a prime end $\xi$. Let us denote by 
$$V_{1}\supset V_{2} \supset \ldots \supset V_{n} \supset \ldots$$
a prime chain which defines the prime end $\xi$. If we denote by $\tilde{V}_{n}$ the space of prime points of $W$ which divide $V_{n}$, then the $\tilde{V}_{n}$'s are a basis of neighbourhoods of the point $\xi$ (see Section 3 in \cite{Math}). Let us fix a natural integer $n$ and $p>n$. If the uniform distance between a homeomorphism $f$ in $\mathrm{Homeo}(\overline{W})$ and the identity  is lower than the distance between the frontier $\mathrm{Fr}_{W}(V_{n})$ of $V_{n}$ in $W$ and the frontier $\mathrm{Fr}_{W}(V_{p})$ of $V_{p}$ in $W$, then the set $f(\mathrm{Fr}_{W}(V_{n}))$ does not meet $\mathrm{Fr}_{W}(V_{p})$. By Lemma 4 in \cite{Math}, if the homeomorphism $f$ is sufficiently close to the identity, then $f(V_{p}) \subset V_{n}$ and $t(f)(\tilde{V}_{p}) \subset \tilde{V}_{n}$. This implies that, for $f$ in such a neighbourhood, the point $t(f)(\xi)$ belongs to $\tilde{V}_{n}$, which is what we wanted to prove.
\end{proof}

Take a prime end $\xi$ of $U$. The \emph{principal set} of $\xi$ is the set of points $p$ in $B$, called principal points of $\xi$, such that there exists a prime chain
$$V_{1}\supset V_{2} \supset \ldots \supset V_{n} \supset \ldots$$
defining $\xi$ such that the sequence of frontiers in $U$ of $V_{n}$ converges for the Hausdorff topology to the single-point set $\left\{ p \right\}$. This set is compact and connected.
Consider the subset of prime ends of $U$ whose principal set contains a point of $\left\{r_{1},r_{2} \right\} \times \mathbb{R} \cup \left\{ + \infty \right\}$. This set is invariant under $\psi$. Denote by $I$ a connected component of the complement of this set (the complement of this set is non empty by \cite{Math} because there exists an arc $[0,+\infty) \rightarrow U$ which converges as $t$ tends to $+\infty$ to a point which belongs to the frontier of $U$ but does not belong to $\left\{r_{1},r_{2} \right\} \times \mathbb{R} \cup \left\{ + \infty \right\}$). By path-connectedness of the group $G_{\theta_{0}}$ and continuity of the action $\psi$, the open interval $I$ is invariant under the action $\psi$. Let $\psi'$ be the restriction of the action $\psi$ to the interval $I$.

\begin{lemma} \label{transprime}
The action $\psi'$ has no fixed point. Therefore, the interval $I$ is open.
\end{lemma}

\begin{proof}
Suppose by contradiction that the action $\psi'$ admits a fixed point $\xi$. Then, the principal set of the prime end $\xi$ would provide a compact subset of the annulus $(r_{1},r_{2}) \times \mathbb{R}$ which is invariant under the action $\tilde{\varphi}^{A}_{\theta_{0}}$, a contradiction with Lemma \ref{invariantcompact} below.

\end{proof}

\begin{lemma} \label{invariantcompact}
The action $\tilde{\varphi}^{A}_{\theta_{0}}$ admits no nonempty compact connected invariant set contained in $(r_{1},r_{2}) \times \mathbb{R}$.
\end{lemma}

\begin{proof}
Consider the unbounded component of the complement of this set. The action $\tilde{\varphi}^{A}_{\theta_{0}}$ induces an action $\eta$ on the space of prime ends of this set, which is homeomorphic to a circle. By Proposition \ref{actionsdersurs}, such an action $\eta$ has a fixed point.

If the set of fixed points of this action has non-empty interior, as accessible prime ends are dense in this set (see \cite{Math}), there exists an accessible prime end which is fixed under $\eta$. Therefore, the only point in the principal set of this prime end, which is contained in the interior of the band $[r_{1},r_{2}] \times \mathbb{R}$, is fixed under the action $\tilde{\varphi}^{A}_{\theta_{0}}$.

Suppose now that this set of fixed points has an empty interior. Take a connected component of the complement of the set of fixed points of $\eta$ and an endpoint $e$ of this interval.  Take a closed interval $J$ of the circle whose interior contains the point $\theta_{0}$. Denote by $G_{J}$ the subgroup of $\mathrm{Homeo}_{0}(\mathbb{S}^{1})$ consisting of homeomorphisms which pointwise fix a neighbourhood of $J$. Then, according to Section 3 in which we describe the continuous actions of the group $\mathrm{Homeo}_{c}(\mathbb{R})$ on the real line, the set of fixed points of $\psi(G_{J})$ contains a closed interval $\mathcal{J}$ with non-empty interior which contains the point $e$ (not necessarily in its interior). As accessible prime ends are dense in $P$, this closed interval contains an accessible prime end. Then, as the principal set of this prime end is reduced to a point $p$, this point $p$ is fixed by the group $\tilde{\varphi}^{A}_{\theta_{0}}(G_{J})$. As a result, the group $\tilde{\varphi}^{A}_{\theta_{0}}(G_{J})$ of homeomorphisms of the band $[r_{1},r_{2}] \times \mathbb{R}$ admits a non-empty set $H_{J}$ of fixed points which is contained in the closure $C_{J}$ of the union of the principal sets of prime ends in $\mathcal{J}$, which is compact. Moreover, this set is contained in the interior of the band $[r_{1},r_{2}] \times \mathbb{R}$. For any closed interval $J'$ whose interior contains the point $\theta_{0}$ and which is contained in $J$, the set $H_{J'}$ of fixed points of $\tilde{\varphi}^{A}_{\theta_{0}}(G_{J'})$ which are contained in $C_{J}$ is non empty. Moreover, if the interval $J''$ is contained in the interval $J'$, then $H_{J''} \subset H_{J'}$. By compactness, the intersection of those sets, which is contained in the set of fixed point of the action $\tilde{\varphi}^{A}_{\theta_{0}}$, is non-empty.
\end{proof}

By the description of continuous actions of the group $\mathrm{Homeo}_{c}(\mathbb{R})$ on the real line with no global fixed points (see Section 3 of this article), the action $\psi$ is transitive on the open interval $I$. Hence, all the prime ends in $I$ are accessible, by density of accessible prime ends. Moreover, by this same description, for any point $\theta \neq \theta_{0}$ of the circle, there is a unique prime end $e_{\theta} \in I$ which is fixed under $\psi_{|G_{\theta_{0}} \cap G_{\theta}}$ and the interval $I$ is the union of these prime ends. For any point $\theta \neq \theta_{0}$ of the circle, let us denote by $\tilde{x}_{\theta}$ the unique point in the principal set of the prime end $e_{\theta}$. For any homeomorphism $f$ in $G_{\theta_{0}}$ which sends the point $\theta \neq \theta_{0}$ to the point $\theta'$, then $\tilde{\varphi}^{A}_{\theta_{0}}(f)(\tilde{x}_{\theta})=\tilde{x}_{\theta'}$, as $G_{\theta'}=f G_{\theta}f^{-1}$. Let $\widetilde{L_{\theta_{0}}}= \left\{ \tilde{x}_{\theta}, \theta \in \mathbb{S}^{1}- \left\{ \theta_{0} \right\} \right\}$ and $L_{\theta_{0}}=\pi(\widetilde{L_{\theta_{0}}})$.

\begin{lemma} \label{continuity}
The map
$$ \begin{array}{rcl}
\mathbb{S}^{1}-\left\{ \theta_{0} \right\} & \rightarrow & (r_{1},r_{2}) \times \mathbb{R} \\
 \theta & \mapsto & \tilde{x}_{\theta}
\end{array}$$
is one-to-one and continuous. Moreover, the limit sets $\lim_{\theta \rightarrow \theta_{0}^{+}} \tilde{x}_{\theta}$ and $\lim_{\theta \rightarrow \theta_{0}^{-}} \tilde{x}_{\theta}$ each contain exactly one point of the boundary $\left\{r_{1},r_{2} \right\} \times \mathbb{R}$ of the band.
\end{lemma}

\begin{proof}
Take $\theta' \neq \theta$ and suppose by contradiction that $\tilde{x}_{\theta}=\tilde{x}_{\theta'}$. Take $\theta'' \neq \theta$ in the same connected component of $\mathbb{S}^{1}- \left\{ \theta_{0}, \theta \right\}$ as $\theta'$ such that $\tilde{x}_{\theta''} \neq \tilde{x}_{\theta}$. Consider a homeomorphism $f$ of the circle in $G_{\theta_{0}} \cap G_{\theta}$ which sends the point $\theta'$ to the point $\theta''$. Then the homeomorphism $\tilde{\varphi}^{A}_{\theta_{0}}(f)$ of the band fixes the point $\tilde{x}_{\theta}$ and sends the point $\tilde{x}_{\theta'}$ to the point $\tilde{x}_{\theta''}$, which is not possible. The map is one-to-one.

Now, let us prove that the considered map is continuous. Take a smooth vector field of the circle which vanishes only on a small connected neighbourhood $N$ of $\theta_{0}$. Let us denote by $(h^{t})_{t \in \mathbb{R}}$ the one-parameter group generated by this vector field. Fix a point $\theta_{1} \in \mathbb{S}^{1}- N$. For any point $\theta \in \mathbb{S}^{1}- N$, denote by $t(\theta)$ the unique time $t$ such that $h^{t}(\theta_{1})=\theta$. The map $\theta \mapsto t(\theta)$ is then a homeomorphism $\mathbb{S}^{1}- N \rightarrow \mathbb{R}$. Now, the relation $\tilde{x}_{\theta}=\varphi(h^{t(\theta)})(\tilde{x}_{\theta_{1}})$ and the continuity of the action $\varphi$ implies that the map $\theta \mapsto \tilde{x}_{\theta}$ is continuous, as the neighbourhood $N$ can be taken arbitrarily small.

Lemma \ref{invariantcompact} implies that the intersection of each of the limit sets $\lim_{\theta \rightarrow \theta_{0}^{+}} \tilde{x}_{\theta}$ and $\lim_{\theta \rightarrow \theta_{0}^{-}} \tilde{x}_{\theta}$ with the boundary $\left\{r_{1},r_{2} \right\} \times \mathbb{R}$ of the band is nonempty. Indeed, otherwise, these limit sets would provide a nonempty compact connected invariant set for the action $\tilde{\varphi}^{A}_{\theta_{0}}$.

It remains to prove that the intersections $\lim_{\theta \rightarrow \theta_{0}^{+}} \tilde{x}_{\theta} \cap \left\{r_{1},r_{2} \right\} \times \mathbb{R}$ and $\lim_{\theta \rightarrow \theta_{0}^{-}} \tilde{x}_{\theta} \cap \left\{r_{1},r_{2} \right\} \times \mathbb{R}$ are reduced to a point. Suppose for instance that the intersection $\lim_{\theta \rightarrow \theta_{0}^{+}} \tilde{x}_{\theta} \cap \left\{r_{1} \right\} \times \mathbb{R}$ contains at least two points. Recall that, by Proposition \ref{actionsdessurs}, the restriction of the action $\varphi$ to $\left\{ r_{1} \right\} \times \mathbb{S}^{1}$ is a conjugacy by a homeomorphism $\mathbb{S}^{1} \rightarrow \left\{ r_{1} \right\} \times \mathbb{S}^{1}$. Therefore, the action $\varphi^{A}_{|G_{\theta_{0}}}$ fixes a point $p$ and is transitive on $\left\{ r_{1} \right\} \times \mathbb{S}^{1}- \left\{p \right\}$. Moreover, as any orbit of the action $\tilde{\varphi}^{A}_{\theta_{0}}$ is bounded by Lemmas \ref{domfondann} and \ref{lift}, this last action pointwise fixes the set $\pi^{-1}(\left\{ p \right\})$ and is transitive on each connected component of $\left\{ r_{1} \right\} \times \mathbb{R}-\pi^{-1}(\left\{ p \right\})$. Therefore, the intersection $\lim_{\theta \rightarrow \theta_{0}^{+}} \tilde{x}_{\theta} \cap \left\{r_{1} \right\} \times \mathbb{R}$, which is closed and invariant under the action $\tilde{\varphi}^{A}_{\theta_{0}}$, contains two distinct lifts $\tilde{p}$ and $\tilde{p}'$ of the point $p$. Take a sequence $(\tilde{x}_{\theta_{n}})_{n \in \mathbb{N}}$ of points in $\widetilde{L_{\theta_{0}}}$, where $\theta_{n}$ tends to $\theta_{0}^{+}$, which converges to the point $\tilde{p}$ and a sequence $(\tilde{x}_{\theta'_{n}})_{n \in \mathbb{N}}$ of points in $\widetilde{L_{\theta_{0}}}$ , where $\theta'_{n}$ tends to $\theta_{0}^{+}$, which converges to the point $\tilde{p}'$. Taking a subsequence if necessary, we may suppose that the sequences $(\theta_{n})_{n}$ and $(\theta'_{n})_{n}$ are monotonous and that, for any integer $n$, the angle $\theta'_{n}$ is between the angles $\theta_{n}$ and $\theta_{n+1}$. Take a homeomorphism $f$ in $G_{\theta_{0}}$ which, for any $n$, sends the point $\theta_{n}$ to the point $\theta'_{n}$. Then, for any $n$, the homeomorphism $\tilde{\varphi}^{A}_{\theta_{0}}(f)$ sends the point $\tilde{x}_{\theta_{n}}$ to the point $\tilde{x}_{\theta'_{n}}$. By continuity, this homeomorphism sends the point $\tilde{p}$ to the point $\tilde{p}'$. This is not possible as these points are fixed under the action $\tilde{\varphi}^{A}_{\theta_{0}}$.
\end{proof}

Note that we do not know for the moment that the points $\lim_{\theta \rightarrow \theta_{0}^{+}} \tilde{x}_{\theta}$ and $\lim_{\theta \rightarrow \theta_{0}^{-}} \tilde{x}_{\theta}$ lie on different boundary components of $A$.

For any angle $\theta \neq \theta_{0}$, we define $L_{\theta}=R_{\theta-\theta_{0}}(L_{\theta_{0}})$. Notice that, for any angle $\theta$, the set $L_{\theta}$ is invariant under the action $\varphi^{A}_{|G_{\theta}}$. Indeed $G_{\theta}=R_{\theta-\theta_{0}} G_{\theta_{0}} R_{\theta-\theta_{0}}^{-1}$ which implies that $\varphi^{A}(G_{\theta})=R_{\theta-\theta_{0}} \varphi^{A}(G_{\theta_{0}}) R_{\theta-\theta_{0}}^{-1}$ and, as the set $L_{\theta_{0}}$ is invariant under the action $\varphi^{A}_{|G_{\theta_{0}}}$, the claim follows. Moreover, for an orientation-preserving homeomorphism $f$ of the circle which sends an angle $\theta$ to another angle $\theta'$, the homeomorphism $\varphi(f)$ sends the set $L_{\theta}$ to the set $L_{\theta'}$. To prove this last fact, use that the homeomorphism $f$ can be written as the composition of a homeomorphism which fixes the point $\theta$ with a rotation.

\begin{lemma} \label{pairdis}
The sets $L_{\theta}$ are pairwise disjoint. Moreover, there exists a homeomorphism from the compact interval $[0,1]$ to the closure of $L_{\theta_{0}}$ which sends the interval $(0,1)$ to the set $L_{\theta_{0}}$.
\end{lemma}

\begin{proof}
By using rotations, we see that it suffices to prove that, for any angle $\theta \neq \theta_{0}$, $L_{\theta_{0}} \cap L_{\theta}= \emptyset$. Fix such an angle $\theta$. Remember that the restriction of the action $\psi'$ to $G_{\theta} \cap G_{\theta_{0}}$ has one fixed point $e_{\theta}$ and is transitive on each connected component of $I- \left\{ e_{\theta} \right\}$. Denote by $E_{\theta}$ the subset of $I$ consisting of prime ends whose only point in the principal set belongs to $\pi^{-1}(L_{\theta})$. As this set is invariant under $\psi_{|G_{\theta} \cap G_{\theta_{0}}}$, it is either empty, or the one-point set $\left\{ e_{\theta} \right\}$ or one of the connected components of $I- \left\{ e_{\theta} \right\}$ or the closure of one of these components or $I$. In the last case, we would have $\widetilde{L_{\theta_{0}}} \subset \pi^{-1}(L_{\theta})$ and $L_{\theta_{0}} \subset L_{\theta}$. Using homeomorphisms of the circle which fix the angle $\theta_{0}$ and send the angle $\theta$ to another angle $\theta'$, we see that, for any angle $\theta'$, $L_{\theta_{0}} \subset L_{\theta'}$. This is not possible as the intersection of the closure of the set $L_{\theta_{0}}$ with the boundary of $A$ is a two-point set which should be invariant under any rotation. The case where the set $E_{\theta}$ is a half-line (open or closed) leads to a similar contradiction by looking at one of the limit sets of $L_{\theta_{0}}$. Hence, for any angle $\theta \neq \theta_{0}$, the intersection $L_{\theta_{0}} \cap L_{\theta}=L_{\theta_{0}} \cap R_{\theta-\theta_{0}}(L_{\theta_{0}})$ contains at most one point, the point $x_{\theta}=\pi(\tilde{x}_{\theta})$.

This implies that any leaf of the form $\left\{ r \right\} \times \mathbb{S}^{1} \subset A$ contains at most two points of $L_{\theta_{0}}$. We claim that if one of these leaves contains two points of $L_{\theta_{0}}$, then no leaf contains exactly one point of this set.

Take a point $x_{\theta_{1}}$ in $L_{\theta_{0}}$ which belongs to the leaf $\left\{ r \right\} \times \mathbb{S}^{1}$. Suppose that there exists another point of the set $L_{\theta_{0}}$ on this leaf. This implies that the point $x_{\theta_{1}}$ belongs to $L_{\theta_{1}}$. Using homeomorphisms in $G_{\theta_{0}}$ which send the point $\theta_{1}$ to another point $\theta \neq \theta_{0}$ of the circle, we see that, for any angle $\theta \neq \theta_{0}$ the point $x_{\theta}$ belongs to $L_{\theta}$. Hence, each circular leaf which meets the set $L_{\theta_{0}}$ contains exactly two points of this set.

If $x$ is another point of $L_{\theta_{0}}$ on the leaf $\left\{ r \right\} \times \mathbb{S}^{1}$, then there exists $\alpha \in \mathbb{S}^{1} - \left\{ 0 \right\}$ such that $R_{\alpha}(x)=x_{\theta_{1}} \in L_{\alpha + \theta_{0}}$. Therefore, $\alpha+ \theta_{0}=\theta_{1}$ and the point $x$ is necessarily the point $R_{\theta_{0}-\theta_{1}}(x_{\theta_{1}})=x_{2\theta_{0}-\theta_{1}}$. Hence, the map $\theta \mapsto p_{1}(x_{\theta})$, where $p_{1}: [r_{1},r_{2}] \times \mathbb{R} \mapsto [r_{1},r_{2}]$ is the projection, is strictly monotonous (as it is one-to-one) on $(\theta_{0}, \theta_{0} + \frac{1}{2}]$ and on $[\theta_{0} + \frac{1}{2}, \theta_{0}+1)$. Moreover, if this map was not globally one-to-one, the circular leaf which contains the point $x_{\theta_{0} + \frac{1}{2}}$ would contain only one point, which is not possible. Hence, this map is strictly monotonous. This implies also that the sets $L_{\theta}$, for $\theta \neq \theta_{0}$, are disjoint from $L_{\theta_{0}}$. The monotonicity of the map $\theta \mapsto p_{1}(x_{\theta})$, combined with Lemma \ref{continuity} implies the second part of the lemma.
\end{proof}

Now, we can complete the proof of the proposition. By Theorem \ref{actionsdersurr}, for any $\theta \in \mathbb{S}^{1}$ and any $r \in (0,1)$ the action $\varphi^{A}_{|G_{\theta}\cap G_{\theta+r}}$ admits a unique fixed point $a(r,\theta)$ in $L_{\theta}$. Moreover, for any point $\theta$ of the circle, the map 
$$ \begin{array}{rcl}
(0,1) & \rightarrow & A \\
r & \mapsto & a(r,\theta)
\end{array}
$$
is one-to-one, continuous and extends to a continuous map $[0,1] \rightarrow A$ which allows us to define $a(0,\theta)$, which is the fixed point of $\varphi^{A}_{|G_{\theta}}$ on one boundary component of $A$, and $a(1,\theta)$, which is the fixed point of $\varphi^{A}_{|G_{\theta}}$ on the other boundary component of $A$. By Lemma \ref{pairdis}, the map $a$ is one-to-one. Moreover, for every point $\alpha$ of the circle, and every point $(r,\theta)$ of the closed annulus $\mathbb{A}= [0,1] \times \mathbb{S}^{1}$, $a(r, \theta+\alpha)=R_{\alpha}(a(r,\theta))$. Indeed, the point $R_{\alpha}(a(r,\theta))$ is the only fixed point of $\varphi^{A}_{|G_{\theta+\alpha}\cap G_{\theta+\alpha+r}}= R_{\alpha}\varphi^{A}_{|G_{\theta}\cap G_{\theta+r}}R_{\alpha}^{-1}$ in $L_{\theta+\alpha}=R_{\alpha}(L_{\theta})$. This implies that the map $a$ is continuous: it is a homeomorphism $\mathbb{A} \rightarrow A$. It remains to prove that it defines a conjugacy with $a_{+}$. Take a homeomorphism $f$, and a point $(r,\theta) \in \mathbb{A}$. Notice that 
$$f G_{\theta}\cap G_{\theta+r} f^{-1}=G_{f(\theta)}\cap G_{f(\theta+r)}=G_{f(\theta)}\cap G_{f(\theta)+(\tilde{f}(\theta+r)-\tilde{f}(\theta))}$$
and that $\varphi^{A}(f)(L_{\theta})=L_{f(\theta)}$.
Therefore, $\varphi^{A}(f)(a(r,\theta))=a(a_{+}(f)(r,\theta))$. The action $\varphi^{A}$ is conjugate to $a_{+}$. As the actions $a_{+}$ and $a_{-}$ are conjugate both by an orientation-preserving homeomorphism and an orientation-reversing homeomorphism, the proposition is proved.
\end{proof}

\subsection{Global conjugacy}

This section is devoted to the end of the proof of Theorem \ref{cerclesursurfaces} in the case of actions on an annulus.

By Lemma \ref{fol2}, our action $\varphi$, restricted to the union $K\times \mathbb{S}^{1}$ of the $F_{\theta}$, where $\theta$ varies over the circle, is conjugate to the restriction of $\varphi_{K,\lambda}$ to $K \times \mathbb{S}^{1}$ by a homeomorphism of the form $(r,\theta) \mapsto (r,\eta(r)+\theta)$ for any $\lambda$. Conjugating $\varphi$ by a homeomorphism of the annulus of the form $(r,\theta) \mapsto (r, \hat{\eta}(r)+\theta)$, where $\hat{\eta}$ is a continuous function which is equal to $\eta$ on $K$, we may assume from now on that the action $\varphi$ is equal to $\varphi_{K,\lambda}$, for any $\lambda$, on $K\times \mathbb{S}^{1}$. 

Moreover, for each connected component $(r_{1}^{i},r_{2}^{i})\times \mathbb{S}^{1}$ of the complement of the closed set $K\times \mathbb{S}^{1}$, the restriction of this action to $[r_{1}^{i},r_{2}^{i}]\times \mathbb{S}^{1}$ is conjugate to $a_{+}$ via an orientation-preserving homeomorphism $g_{i} : [0,1] \times \mathbb{S}^{1} \rightarrow [r_{1}^{i},r_{2}^{i}]\times \mathbb{S}^{1}$, by Proposition \ref{outside}.

We now define a particular continuous function $\lambda_{0}: [0,1] - K \rightarrow \left\{ -1,+1 \right\}$ such that the action $\varphi$ is conjugate to $\varphi_{K,\lambda_{0}}$. For any index $i$, let us denote by $d^{+}_{i}$ the diameter of the set $g_{i}(\left\{ (r,0), 0 \leq r \leq 1 \right\})$ and by $d^{-}_{i}$ the diameter of the set $g_{i}(\left\{ (r,-r), 0 \leq r \leq 1 \right\})$. If $d_{i}^{+} \leq d_{i}^{-}$, we define $\lambda_{0}$ to be identically equal to $+1$ on the interval $(r_{1}^{i}, r_{2}^{i})$. Otherwise, \emph{i.e.} if $d_{i}^{+}> d_{i}^{-}$, we define the map $\lambda_{0}$ to be identically equal to $-1$ on the interval $(r_{1}^{i}, r_{2}^{i})$.

Now, let us define a conjugacy between the action $\varphi$ and the action $\varphi_{K,\lambda_{0}}$. Notice that, by what is above, for any connected component $(r_{1}^{i},r_{2}^{i}) \times \mathbb{S}^{1}$ of the complement of $K \times \mathbb{S}^{1}$, there exists an orientation-preserving homeomorphism $h_{i}$ of $[r_{1}^{i},r_{2}^{i}] \times \mathbb{S}^{1}$ such that, for any orientation-preserving homeomorphism $f$ of the circle, 
$$h_{i} \circ \varphi_{K,\lambda_{0}}(f)_{|[r_{1}^{i},r_{2}^{i}] \times \mathbb{S}^{1}}=\varphi(f)_{|[r_{1}^{i},r_{2}^{i}] \times \mathbb{S}^{1}} \circ h_{i}.$$
Then, for any point $(r,\theta) \in [r_{1}^{i},r_{2}^{i}] \times \mathbb{S}^{1}$ and any angle $\alpha \in \mathbb{S}^{1}$, $h_{i}(r,\theta+\alpha)= h_{i}(r,\theta)+(0,\alpha).$ (Recall that we supposed at the beginning of this section that the morphism $\varphi$ sent the rotation of the circle of angle $\alpha$ on the rotation of the annulus of angle $\alpha$.) 

Now, let us denote by $h: \mathbb{A} \rightarrow \mathbb{A}$ the map which is equal to the identity on $K \times \mathbb{S}^{1}$ and which is equal to $h_{i}$ on the connected component $(r_{1}^{i},r_{2}^{i}) \times \mathbb{S}^{1}$ of the complement of $K \times \mathbb{S}^{1}$. It is clear that the map $h$ is a bijection and that, for a homeomorphism $f$ in $\mathrm{Homeo}_{0}(\mathbb{S}^{1})$, the relation $h \circ \varphi_{K, \lambda_{0}}(f)= \varphi(f) \circ h$ holds. Moreover, the map $h$ commutes with the rotations.

It remains to prove that the map $h$ is continuous. As the map $h$ commutes with rotations, it suffices to prove that the map
$$\begin{array}{rrcl}
\eta: & [0,1] & \rightarrow & \mathbb{A} \\
 & r & \mapsto & h(r,0)
\end{array}
$$
is continuous. Notice first that, for a connected component $(r_{1}^{i},r_{2}^{i})$ of the complement of $K$, the limit $\lim_{ r \rightarrow r_{1}^{i+}} \eta(r)$ is equal to $h_{i}(r_{1},0)$. This last point is the only point of $F_{0}$ on $\left\{ r_{1} \right\} \times \mathbb{S}^{1}$ and is therefore equal to $\eta(r_{1})=(r_{1},0)$. The map $\eta$ is also continuous on the left at $r_{2}^{i}$. It suffices now to establish that, for any sequence $((r_{1}^{n},r_{2}^{n}))_{n \in \mathbb{N}}$ of connected components of the complement of $K$ such that the sequence $(r_{1}^{n})_{n \in \mathbb{N}}$ is monotonous and converges to a point $r_{\infty}$ which belongs to the compact set $K$, for any sequence $(r_{n})_{n \in \mathbb{N}}$ of real numbers such that, for any integer $n$, $r_{n} \in (r_{1}^{n},r_{2}^{n})$, the sequence $(\eta(r_{n}))_{n \in \mathbb{N}}$ converges to $\eta(r_{\infty})=(r_{\infty},0)$. Fix now such a sequence $((r_{1}^{n},r_{2}^{n}))_{n \in \mathbb{N}}$.

\begin{lemma}
For $n$ sufficiently large, one of the curves $h([r_{1}^{n},r_{2}^{n}] \times \left\{ 0 \right\})$ and $h( \left\{ (r, \theta-\frac{r-r_{1}^{n}}{r_{2}^{n}-r_{1}^{n}}), r_{1}^{n} \leq r \leq r_{2}^{n} \right\})$ is homotopic with fixed extremities in the closed annulus $[r_{1}^{n},r_{2}^{n}] \times \mathbb{S}^{1}$ to the curve $[r_{1}^{n},r_{2}^{n}] \times \left\{ 0 \right\}$. We denote by $c_{n}$ this curve.
\end{lemma}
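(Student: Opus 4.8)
The plan is to read the statement as an assertion about winding numbers. The points $(r_1^n,0)$ and $(r_2^n,0)$ lie in $K\times\mathbb{S}^1$, where $h$ is the identity, so both curves in the statement are arcs of $A_n:=[r_1^n,r_2^n]\times\mathbb{S}^1$ joining its two boundary circles and having these two fixed extremities. An arc with such extremities is homotopic with fixed end points to the radial segment $[r_1^n,r_2^n]\times\{0\}$ if and only if its winding number vanishes, where by winding number I mean the net variation of the angular coordinate read in the universal cover $[r_1^n,r_2^n]\times\mathbb{R}$. Thus the lemma reduces to showing that, for $n$ large, at least one of the two arcs has winding number $0$.

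Next I would check that the two winding numbers are consecutive integers. The first arc is $h_i(\{(r,0)\})$ and the second is $h_i(\{(r,-\tfrac{r-r_1^n}{r_2^n-r_1^n})\})$, the images under the local conjugacy $h_i$ of the radial curve (winding $0$) and of the $a_+$-type spiral (winding $-1$). Because $\varphi$ sends each rotation to the corresponding rotation, $h_i$ commutes with the rotations $R_\alpha$, so it has the form $(r,\theta)\mapsto(\rho(r),\theta+\beta(r))$ with $\rho$ increasing, $\rho(r_j^n)=r_j^n$ and $\beta(r_1^n)=\beta(r_2^n)=0$ (the extremities being the boundary fixed points, which $h_i$ fixes). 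Such a map increases the winding number of \emph{every} arc joining the two boundary circles by the same integer $m=\deg(\beta)$; hence the two arcs have windings $m$ and $m-1$, and one of them is $0$ precisely when $m\in\{0,1\}$.

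The heart of the proof is then to force $m\in\{0,1\}$ for $n$ large. On one hand, an arc of nonzero winding meets every fibre $\{r\}\times\mathbb{S}^1$, hence contains two points at angular distance $\tfrac12$, so its diameter in $A_n$ is at least $\tfrac12$; this is the geometric content behind the comparison of $d_i^+$ and $d_i^-$ defining $\lambda_0$, and it shows the winding-$0$ arc is the one of smaller diameter. On the other hand I would bound $m$ itself: by Lemma \ref{lift} the lift $\tilde\varphi^A_{\theta_0}$ fixes $(r_1^n,0)$, and by Lemma \ref{domfondann} it keeps the orbits of the fundamental domain inside a controlled ball, so the two invariant arcs, being such orbit closures, reach the lifts $(r_2^n,m)$ and $(r_2^n,m-1)$ of the outer fixed point while staying vertically confined. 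Since $r_2^n-r_1^n\to0$ and, on the limiting leaf $\{r_\infty\}\times\mathbb{S}^1\subset K\times\mathbb{S}^1$, the continuous action $\varphi$ is the purely rotational model whose $G_0$-fixed point sits at $\theta=0$ with no angular twist, the continuity of $\varphi$ forces the net angular variation of the invariant arcs, i.e. the integer $m$, to stabilise in $\{0,1\}$. For such $n$ one arc has winding $0$; it is the smaller-diameter arc singled out by $\lambda_0$, and we call it $c_n$.

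I expect the main obstacle to be exactly this last step: passing from the soft information \emph{(bounded lifted orbits and $r_2^n-r_1^n\to0$)} to the sharp arithmetic statement $m\in\{0,1\}$. A uniform displacement bound alone yields only that $|m|$ is bounded, not that $m\in\{0,1\}$; to pin the integer down one must genuinely use that the invariant arcs converge, as the components collapse onto $\{r_\infty\}\times\mathbb{S}^1$, to the twist-free rotational structure carried by that leaf, so that their angular width tends to $0$. Making this convergence precise, uniformly in $\theta$ through the family $\theta\mapsto\tilde x_\theta$ of Lemma \ref{continuity} and the continuity of $\varphi$, is where the real work lies.
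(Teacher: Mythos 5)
Your reduction of the lemma to the statement that the twist integer $m=m_{n}$ of the conjugacy $h_{n}$ lies in $\left\{0,1\right\}$ for $n$ large is correct and matches the paper's implicit setup: since $h_{n}$ commutes with the rotations, it shifts the winding numbers of all arcs joining the two boundary circles of $[r_{1}^{n},r_{2}^{n}]\times\mathbb{S}^{1}$ by one and the same integer $m_{n}$; the two model arcs have windings $0$ and $-1$; and the lemma is exactly the assertion $m_{n}\in\left\{0,1\right\}$ for large $n$. But your proof stops at precisely the point where the work begins, as you acknowledge yourself: the sentence ``the continuity of $\varphi$ forces the net angular variation \dots to stabilise in $\left\{0,1\right\}$'' is a restatement of the goal, not an argument. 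Note also that Lemma \ref{domfondann} controls displacements of lifts of homeomorphisms \emph{in the image of} $\varphi$ on a \emph{fixed} component; it does not bound the twist of the conjugating map $h_{n}$, and its constant is not asserted to be uniform over the sequence of components, so even the weaker claim that $\left|m_{n}\right|$ stays bounded is not established by what you wrote.

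The paper closes this gap by contradiction, with an equivariant rigidity argument rather than a displacement bound. If $m_{\sigma(n)}\notin\left\{0,1\right\}$ along a subsequence, then one of the two image curves, call it $\gamma_{n}$, lifts to an arc from $(r_{1}^{\sigma(n)},0)$ to $(r_{2}^{\sigma(n)},k_{n})$ with $\left|k_{n}\right|\geq 2$; since $r_{2}^{\sigma(n)}-r_{1}^{\sigma(n)}\rightarrow 0$, this lifted arc must contain points converging to $(r_{\infty},1)$, which projects to the $\varphi(G_{0})$-fixed point $(r_{\infty},0)$. Every non-endpoint of $\gamma_{n}$ is the unique fixed point on $\gamma_{n}$ of some group $\varphi(G_{0}\cap G_{r})$ with $r\in(0,1)$, and the paper first proves an intermediate claim: two sequences of such points with distinct limits, neither limit being a limit of endpoints, must have parameters converging to distinct values $R\neq R'$, both in $(0,1)$ --- otherwise a nondegenerate subarc of $\left\{r_{\infty}\right\}\times\mathbb{S}^{1}$ would be pointwise fixed by $\varphi(G_{0}\cap G_{R})$, which is impossible. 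Taking one sequence converging to $(r_{\infty},1)$ and another converging to a point $\tilde{p}$ not lying over $(r_{\infty},0)$, and choosing $f\in G_{0}$ with $f(R)=R'$, one concludes by continuity that $\varphi(f)$ sends $(r_{\infty},0)$ to $\pi(\tilde{p})\neq(r_{\infty},0)$, contradicting the fact that $(r_{\infty},0)$ is fixed by $\varphi(G_{0})$. This is the mechanism your sketch is missing; without it, or something equivalent, the lemma is not proved.
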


\begin{proof}
Suppose by contradiction that there exists a strictly increasing map $\sigma : \mathbb{N} \rightarrow \mathbb{N}$ such that, for any $n \in \mathbb{N}$, neither $h([r_{1}^{\sigma(n)},r_{2}^{\sigma(n)}] \times \left\{ 0 \right\})$ nor $h( \left\{ (r, \theta-\frac{r-r_{1}^{\sigma(n)}}{r_{2}^{\sigma(n)}-r_{1}^{\sigma(n)}}), r_{1}^{\sigma(n)} \leq r \leq r_{2}^{\sigma(n)} \right\})$ are homotopic to $[r_{1}^{n},r_{2}^{n}] \times \left\{ 0 \right\}$. Then, one of the two curves $h([r_{1}^{\sigma(n)},r_{2}^{\sigma(n)}] \times \left\{ 0 \right\})$ and $h( \left\{ (r, \theta-\frac{r-r_{1}^{\sigma(n)}}{r_{2}^{\sigma(n)}-r_{1}^{\sigma(n)}}), r_{1}^{\sigma(n)} \leq r \leq r_{2}^{\sigma(n)} \right\})$, which we denote by $\gamma_{n}$, admits a lift $\widetilde{\gamma_{n}}$ such that $\widetilde{\gamma_{n}}\cap \left\{ r_{1}^{\sigma(n)} \right\} \times \mathbb{R}= \left\{ (r_{1}^{\sigma(n)},0) \right\}$ and $\widetilde{\gamma_{n}}\cap \left\{ r_{2}^{\sigma(n)} \right\} \times \mathbb{R}= \left\{ (r_{2}^{\sigma(n)},k_{n}) \right\}$, where $k_{n}$ is an integer with $\left|k_{n} \right| \geq 2$. Taking a subsequence if necessary, we can suppose that either, for any $n$, $k_{n} \geq 2$ or, for any n, $k_{n} \leq -2$. To simplify notation, suppose that $k_{n}\geq 2$ for any $n$. 

We now need an intermediate result. Let $(\tilde{x}_{n})_{n \in \mathbb{N}}$ and $(\tilde{y}_{n})_{n \in \mathbb{N}}$ be sequences of points of the band $[0,1] \times \mathbb{R}$ converging respectively to the points $\tilde{x}_{\infty}$ and $\tilde{y}_{\infty}$ such that, for any $n$, the points $\tilde{x}_{n}$ and $\tilde{y}_{n}$ belong to the curve $\widetilde{\gamma_{n}}$ and are not endpoints of this curve. We suppose that $\tilde{x}_{\infty} \neq \tilde{y}_{\infty}$ and that the points $\tilde{x}_{\infty}$ and $\tilde{y}_{\infty}$ are not limit points of the endpoints of $\widetilde{\gamma_{n}}$. For any integer $n$, there exist unique real numbers $r_{n}$ and $r'_{n} \in (0,1)$ such that the point $x_{n}=\pi(\tilde{x}_{n})$ (respectively $y_{n}=\pi(\tilde{y}_{n})$) is the unique fixed point of the group $\varphi(G_{0} \cap G_{r_{n}})$ (respectively $\varphi(G_{0} \cap G_{r'_{n}})$) on the curve $\gamma_{n}$. We claim that, for any strictly increasing map $s:\mathbb{N} \rightarrow \mathbb{N}$ such that the sequences $(r_{s(n)})_{n \in \mathbb{N}}$ and $(r'_{s(n)})_{n \in \mathbb{N}}$ converge respectively to $R$ and $R'$, then $R \neq R'$. Moreover,  the real numbers $R$ as $R'$ are different from $0$ or $1$. 

Let us begin by proving this claim. Suppose by contradiction that there exists a map $s$ such that $R=R'$. We may suppose (by extracting a subsequence and by changing the roles of $x_{n}$ and $y_{n}$ if necessary) that, for any n, $r_{s(n)} < r'_{s(n)}$. Then the set of points in $\gamma_{s(n)}$ which are fixed under one of the actions $\varphi_{|G_{0} \cap G_{r}}$, with $r_{s(n)} \leq r \leq r'_{s(n)}$ (this is also the projection of the set of points on $\widetilde{\gamma_{s(n)}}$ between $\tilde{x}_{s(n)}$ and $\tilde{y}_{s(n)}$) defines a sequence of paths which converge to an interval contained in $\left\{ r_{\infty} \right\} \times \mathbb{S}^{1}$. This interval is the projection of the interval whose endpoints are $\tilde{x}_{\infty}$ and $\tilde{y}_{\infty}$: it has non-empty interior and is necessarily pointwise fixed by $\varphi_{|G_{0} \cap G_{R}}$. Indeed, any homeomorphism $f$ in $G_{0}\cap G_{R}$ fixes the points between $r_{s(n)}$ and $r'_{s(n)}$ for $n$ sufficiently large so that the homeomorphism $\varphi(f)$ fixes the projection of the set of points on $\widetilde{\gamma_{s(n)}}$ between $\tilde{x}_{s(n)}$ and $\tilde{y}_{s(n)}$, for $n$ sufficiently large. Hence, it also pointwise fixes the projection of the interval between the points $\tilde{x}_{\infty}$ and $\tilde{y}_{\infty}$ on the line $\left\{r_{\infty} \right\} \times \mathbb{R}$. However, there exist no such interval with non-empty interior, as the group $\varphi(G_{0} \cap G_{R})$ does not pointwise fix an interval of $\left\{ r_{\infty} \right\} \times \mathbb{S}^{1}$ with non-empty interior, a contradiction.

Let us establish now for instance that $R \neq 1$. The set of points in $\gamma_{s(n)}$ which are fixed by $\varphi_{|G_{0} \cap G_{r}}$, with $r_{s(n)} \leq r \leq 1$ is a path which admits a lift which joins the point $\tilde{x}_{s(n)}$ to either the point $(r_{2}^{\sigma(s(n))},k)$ or the point $(r_{1}^{\sigma(s(n))},k)$. Taking a subsequence if necessary, this sequence of sets converges to an interval with non-empty interior contained in $\left\{ r_{\infty} \right\} \times \mathbb{S}^{1}$. This interval is necessarily pointwise fixed by $\varphi_{|G_{0} \cap G_{R}}$: this is a contradiction.

Let us come back to our proof. There exists a sequence $(\tilde{x}_{n})_{n \in \mathbb{N}}$ of points of the band $[0,1] \times \mathbb{R}$ converging to the point $(r_{\infty},1)$ such that, for any $n$, the point $\tilde{x}_{n}$ belongs to the curve $\widetilde{\gamma_{n}}$ and is not an endpoint of this curve. Let $x_{n}= \pi(\tilde{x}_{n})$. For any integer $n$, there exists a unique real number $r_{n} \in (0,1)$ such that the point $x_{n}$ is the unique fixed point of the group $\varphi(G_{0} \cap G_{r_{n}})$ on the curve $\gamma_{n}$. For any $n$, take a point $\tilde{y}_{n}$ on the curve $\widetilde{\gamma_{n}}$ such that the sequence $(\tilde{y}_{n})_{n}$ converges to a point $\tilde{p}$ which is not a point of the form $(r_{\infty},k)$, where $k$ is an integer. Then it is not a limit point of a sequence of endpoints of the curve $\widetilde{\gamma_{n}}$. As above, we denote by $r'_{n} \in (0,1)$ the real number associated to the point $y_{n}= \pi(\tilde{y}_{n})$. Consider a strictly increasing map $s:\mathbb{N} \rightarrow \mathbb{N}$ such that the sequences $(r_{s(n)})_{n \in \mathbb{N}}$ and $(r'_{s(n)})_{n \in \mathbb{N}}$ converge respectively to $R$ and $R'$. Take a homeomorphism $f$ in $G_{0}$ which sends the point $R$ of the circle to the point $R'$ of the circle. Let $z_{n}=\varphi(f)(x_{n})$. Then, for any $n$, the real number associated to the point $z_{s(n)}$ is necessarily $f(r_{s(n)})$ and the sequence $(f(r_{s(n)}))_{n}$ converges to $R'$. By the claim above, the sequence $(z_{s(n)})_{n}$ has the same limit as the sequence $(y_{s(n)})_{n}$. By continuity, this homeomorphism of the annulus sends the point $(r_{\infty},0)$ to the point $p=\pi(\tilde{p})$. This is not possible as the point $(r_{\infty},0)$ of the annulus is fixed by the group $\varphi(G_{0})$.
\end{proof}

\begin{lemma}
The diameter of the curve $c_{n}$ tends to $0$ as $n$ tends to $+\infty$.
\end{lemma}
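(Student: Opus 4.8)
The plan is to argue by contradiction, collapsing the thin bands onto the circle $\{r_{\infty}\}\times\mathbb{S}^{1}$ and exploiting the dynamical rigidity of $\varphi(G_{0})$ on that circle, in the spirit of the previous lemma. First I would record the geometric input: the intervals $(r_{1}^{n},r_{2}^{n})$ are pairwise distinct connected components of $[0,1]-K$, so their lengths are summable, whence $r_{2}^{n}-r_{1}^{n}\to 0$ and $r_{1}^{n},r_{2}^{n}\to r_{\infty}$; the band $[r_{1}^{n},r_{2}^{n}]\times\mathbb{S}^{1}$ collapses onto $\{r_{\infty}\}\times\mathbb{S}^{1}\subset K\times\mathbb{S}^{1}$. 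Suppose, for a contradiction, that $\mathrm{diam}(c_{n})\not\to 0$; after extraction, $\mathrm{diam}(c_{n})\ge\varepsilon>0$. Since the $r$-extent of $c_{n}$ tends to $0$, the curves $c_{n}$ accumulate, for the Hausdorff topology, on a compact connected set $C\subseteq\{r_{\infty}\}\times\mathbb{S}^{1}$ with $\mathrm{diam}(C)\ge\varepsilon$, and $C$ contains the limit $(r_{\infty},0)$ of the endpoint $(r_{1}^{n},0)$.

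Next I would use invariance. Each $c_{n}$ is the image under the conjugacy $h$ of one of the two $\varphi_{K,\lambda_{0}}(G_{0})$-invariant curves of the component, hence is $\varphi(G_{0})$-invariant, and the $G_{0}$-action on its interior is transitive with no fixed point. As every $\varphi(f)$ is a homeomorphism, passing to the Hausdorff limit in $\varphi(f)(c_{n})=c_{n}$ gives $\varphi(f)(C)=C$, so $C$ is $\varphi(G_{0})$-invariant. On $\{r_{\infty}\}\times\mathbb{S}^{1}$ we have arranged $\varphi=\varphi_{K,\lambda_{0}}$, so the action there is $(r_{\infty},\theta)\mapsto(r_{\infty},f(\theta))$, transitive on $\{r_{\infty}\}\times(\mathbb{S}^{1}-\{0\})$ and fixing only $(r_{\infty},0)$. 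The only closed connected $\varphi(G_{0})$-invariant subsets of this circle are therefore $\{(r_{\infty},0)\}$ and the whole circle; as $\mathrm{diam}(C)\ge\varepsilon$, we must have $C=\{r_{\infty}\}\times\mathbb{S}^{1}$.

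It remains to exclude this last possibility. I would parametrize the interior of $c_{n}$ by $s\in(0,1)\cong\mathbb{S}^{1}-\{0\}$, sending $s$ to the unique fixed point of $\varphi(G_{0}\cap G_{s})$ on $c_{n}$; this equivariant parametrization exists because, by Theorem \ref{actionsdersurr}, the $G_{0}$-action on the interior of $c_{n}$ is conjugate to the standard transitive action of $\mathrm{Homeo}_{c}(\mathbb{R})$ on $\mathbb{R}$, the two endpoints $(r_{1}^{n},0)$ and $(r_{2}^{n},0)$ corresponding to $s\to 0^{+}$ and $s\to 1^{-}$. I can then reuse the intermediate claim of the previous lemma: if two sequences of interior points of the lifts $\widetilde{c_{n}}$ converge to distinct points of $\{r_{\infty}\}\times\mathbb{R}$ that are not limits of endpoints, then their parameters converge to distinct values $R\ne R'$, both in $(0,1)$ — otherwise the intervening sub-arcs of $c_{n}$ would converge to a non-degenerate sub-arc of $\{r_{\infty}\}\times\mathbb{S}^{1}$ pointwise fixed by $\varphi(G_{0}\cap G_{R})$, which is impossible since that group fixes only $(r_{\infty},0)$ and $(r_{\infty},R)$ there. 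Finally, since $C$ is the whole circle, I select an interior-point sequence whose lift converges to a point of $\{r_{\infty}\}\times\mathbb{R}$ projecting to $(r_{\infty},0)$, and another converging to some $(r_{\infty},c)$ with $c\ne 0$, with parameters tending to distinct $R,R'\in(0,1)$; choosing $f\in G_{0}$ with $f(R)=R'$ and applying $\varphi(f)$, the claim forces $\varphi(f)$ to send $(r_{\infty},0)$ to $(r_{\infty},c)\ne(r_{\infty},0)$, contradicting that $(r_{\infty},0)$ is fixed by $\varphi(G_{0})$.

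I expect the main obstacle to be exactly this last step: excluding the whole-circle Hausdorff limit in the absence of any winding hypothesis on $c_{n}$. Producing the two interior-point sequences with admissible distinct limits and controlling the limits of their parameters is delicate and requires the careful reuse of the previous lemma's dichotomy, whereas the collapsing of the bands and the invariance of $C$ are routine.
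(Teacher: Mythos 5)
Your argument is correct and follows essentially the same route as the paper: extract a Hausdorff-convergent subsequence of the lifts $\tilde{c}_{n}$, use $\varphi(G_{0})$-invariance of the limit to force its projection to be all of $\left\{ r_{\infty}\right\} \times \mathbb{S}^{1}$, and then rule this out by reusing the intermediate claim of the preceding lemma (parameters $R \neq R'$ in $(0,1)$ for interior-point sequences with distinct admissible limits) together with a homeomorphism $f \in G_{0}$ sending $R$ to $R'$, contradicting that $(r_{\infty},0)$ is fixed by $\varphi(G_{0})$. The only point to make explicit is that the sequence you choose over $(r_{\infty},0)$ must converge to a lift $(r_{\infty},k)$ with $k\neq 0$ so that it is not a limit of endpoints, which is available because the limit interval has length at least $1$; this is exactly the choice the paper makes.
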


\begin{proof}
This proof is similar to the proof of the above lemma. Suppose that the sequence of diameters of $c_{n}$ does not converge to $0$. Let us denote by $\tilde{c}_{n}$ the lift of the curve $c_{n}$ whose origin is the point $(r_{1}^{\sigma(n)},0)$. There exists a subsequence $\tilde{c}_{s(n)}$ of this sequence which converges (for the Hausdorff topology) to an interval with non-empty interior. As the projection of this interval on the annulus is invariant under the action $\varphi_{|G_{0}}$, this compact set projects onto the whole circle $\left\{ r_{\infty} \right\} \times \mathbb{S}^{1}$. Therefore, there exists a sequence $(\tilde{x}_{n})_{n \in \mathbb{N}}$, where the point $\tilde{x}_{n}$ belongs to $\tilde{c}_{s(n)}$, which converges to a point of the form $(r_{\infty}, k)$ with $k \neq 0$ (which is not the limit of a sequence of endpoints of $\tilde{c}_{s(n)}$). There also exists a sequence $(\tilde{y}_{n})_{n \in \mathbb{N}}$, where the point $\tilde{x}_{n}$ belongs to $\tilde{c}_{s(n)}$, which converges to a point $\tilde{p}$ which is not of the form $(r_{\infty}, k)$, where $k$ is an integer. As in the proof of the above lemma, let us denote $r_{n} \in (0,1)$ (respectively $r'_{n}$) the real number associated to the point $x_{n}=\pi(\tilde{x}_{n})$ (respectively $y_{n}=\pi(\tilde{y}_{n})$). Taking a subsequences if necessary, we may suppose that the sequences $(r_{n})_{n}$ and $(r'_{n})_{n}$ converge respectively to the real numbers $R$ and $R'$. As in the proof of the above lemma, $R \neq R'$ and both these numbers are different from $0$ and $1$. Take then a homeomorphism $f$ in $G_{0}$ which sends the point $R$ to the point $R'$. Then the homeomorphism $\varphi(f)$ sends the point $(r_{\infty},0)$ to the point $p=\pi(\tilde{p}) \neq (r_{\infty},0)$, which is not possible as the point $(r_{\infty},0)$ is fixed by the group $\varphi(G_{0})$.
\end{proof}

Now, let us prove that $c_{n}=h([r_{1}^{n},r_{2}^{n}] \times \left\{ 0 \right\})$ for $n$ sufficiently large, which proves the continuity of the map $\eta$ by the above lemma (because the endpoints of these curves converge to the point $\eta(r_{\infty}$. 
Let us denote by $c'_{n}$ the curve among $h([r_{1}^{n},r_{2}^{n}] \times \left\{ 0 \right\})$ and $h( \left\{ (r,\theta-\frac{r-r_{1}^{n}}{r_{2}^{n}-r_{1}^{n}}), r_{1}^{n} \leq r \leq r_{2}^{n} \right\})$ which is not equal to $c_{n}$. As the homotopy class of this curve $c'_{n}$ is not the homotopy class of $[r_{1}^{n},r_{2}^{n}] \times \left\{ 0 \right\}$, the diameter of $c'_{n}$ is bounded from below by $\frac{1}{2}$. Hence, for $n$ sufficiently large (say for $n \geq N$), the diameter of $c_{n}$ is smaller than the diameter of $c'_{n}$.

Fix now $n \geq N$. The orientation of the circle defines an order on $\mathbb{S}^{1}- \left\{ 0 \right\}$. Take a homeomorphism $f\neq Id$ in $G_{0}$ such that, for any $\theta \neq 0$, $f(\theta) \geq \theta$. Then the restriction of $\varphi(f)$ to $h([r_{1}^{n},r_{2}^{n}] \times \left\{ 0 \right\})$ and to $h( \left\{ (r,\theta-\frac{r-r_{1}^{n}}{r_{2}^{n}-r_{1}^{n}}), r_{1}^{n} \leq r \leq r_{2}^{n} \right\})$ defines on orientation on both curves (the orientation such that an arc from a point $x$ to its image by $\varphi(f)$ is positively oriented). If $d_{n}^{+} \leq d_{n}^{-}$, the interior of the curve $c_{n}$ is equal to the curve $g_{n}((0,1) \times \left\{ 0 \right\})$: this is the only simple curve in $(r_{1}^{n},r_{2}^{n}) \times \mathbb{S}^{1}$ which is invariant under $\varphi_{|G_{0}}$ and which is oriented from the point $(r_{1}^{n},0)$ to the point $(r_{2}^{n},0)$. Moreover, the only simple curve in $(r_{1}^{n},r_{2}^{n}) \times \mathbb{S}^{1}$ which is invariant under the action $\varphi_{K,\lambda_{0}}$ restricted to $G_{0}$ which is oriented from the point $(r_{1}^{n},0)$ to the point $(r_{2}^{n},0)$ is the curve $(r_{1}^{n},r_{2}^{n}) \times \left\{ 0 \right\}$. Finally, as the homeomorphism $h$ is continuous on $[r_{1}^{n},r_{2}^{n}] \times \mathbb{S}^{1}$ and $h \circ \varphi_{K,\lambda_{0}}= \varphi \circ h$, we have $h([r_{1}^{n},r_{2}^{n}] \times \left\{ 0 \right\})= c_{n}$. If $d_{n}^{-}<d_{n}^{+}$, the interior of the curve $c_{n}$ is equal to the curve $g_{n}(\left\{ (r,-r), 0 < r < 1 \right\})$: this is the only simple curve in $(r_{1}^{n},r_{2}^{n}) \times \mathbb{S}^{1}$ which is invariant under $\varphi_{|G_{0}}$ and which is oriented from the point $(r_{2}^{n},0)$ to the point $(r_{1}^{n},0)$. Moreover, the only simple curve in $(r_{1}^{n},r_{2}^{n}) \times \mathbb{S}^{1}$ which is invariant under the action $\varphi_{K,\lambda_{0}}$ restricted to $G_{0}$ which is oriented from the point $(r_{2}^{n},0)$ to the point $(r_{1}^{n},0)$ is the curve $(r_{1}^{n},r_{2}^{n}) \times \left\{ 0 \right\}$. Moreover, as the homeomorphism $h$ is continuous on $[r_{1}^{n},r_{2}^{n}] \times \mathbb{S}^{1}$ and $h \circ \varphi_{K,\lambda_{0}}= \varphi \circ h$, we have $h([r_{1}^{n},r_{2}^{n}] \times \left\{ 0 \right\})= c_{n}$.

\section{Case of the torus}

Let $\varphi: \mathrm{Homeo}_{0}(\mathbb{S}^{1}) \rightarrow \mathrm{Homeo}_{0}(\mathbb{T}^{2})$ be a one-to-one group morphism (if such a group morphism is not one-to-one, it is trivial). Recall that such a morphism is continuous by Theorem 4 and Proposition 2 in \cite{RS}. In this section, we prove the following theorem and its corollary:

\begin{theorem}\label{fixedpoint}
For any point $x$ of the circle, the image under the morphism $\varphi$ of the group $G_{x}$ admits a global fixed point.
\end{theorem}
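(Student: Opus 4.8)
The plan is to reduce the statement to the existence of a compact connected invariant set for a lifted action on the universal cover $\mathbb{R}^2$ of $\mathbb{T}^2$, and then to extract a fixed point from such a set exactly as in the annulus case. Fix the point $x$ and write $\theta_{0}=x$; recall that $G_{\theta_{0}}$ is isomorphic to $\mathrm{Homeo}_{c}(\mathbb{R})$, so it is contractible, simple, and uniformly perfect (by the standard fragmentation and displacement techniques there is an integer $N$ such that every element of $G_{\theta_{0}}$ is a product of at most $N$ commutators). First I would lift the action: since $G_{\theta_{0}}$ is contractible and the projection $\Pi_{*}:\mathrm{Homeo}_{\mathbb{Z}^{2}}(\mathbb{R}^{2}) \to \mathrm{Homeo}_{0}(\mathbb{T}^{2})$ (where $\mathrm{Homeo}_{\mathbb{Z}^{2}}(\mathbb{R}^{2})$ denotes the homeomorphisms commuting with the integral translations and $\Pi:\mathbb{R}^{2}\to\mathbb{T}^{2}$ is the covering projection) is a covering, the restriction $\varphi_{|G_{\theta_{0}}}$ lifts to a continuous map sending the identity to the identity, and perfectness forces this lift to be a group morphism $\tilde{\varphi}_{\theta_{0}}:G_{\theta_{0}}\to\mathrm{Homeo}_{\mathbb{Z}^{2}}(\mathbb{R}^{2})$, exactly as in Lemma \ref{lift}.

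The heart of the argument is to prove that every orbit of $\tilde{\varphi}_{\theta_{0}}(G_{\theta_{0}})$ is bounded. The same word-length estimate as in Lemma \ref{domfondann} (whose proof uses neither continuity nor the target, only the source $\mathrm{Homeo}_{0}(\mathbb{S}^{1})$) shows that there is $M>0$ such that the image under any lift of $\varphi(f)$ of the unit square $D=[0,1]^{2}$ has diameter at most $M$, for every $f$ in $\mathrm{Homeo}_{0}(\mathbb{S}^{1})$. For $g$ in $\mathrm{Homeo}_{\mathbb{Z}^{2}}(\mathbb{R}^{2})$ and $i\in\{1,2\}$, set $\tau_{i}(g)=\sup_{z}(p_{i}(g(z))-p_{i}(z))$ and $\underline{\tau}_{i}(g)=\inf_{z}(p_{i}(g(z))-p_{i}(z))$, where $p_{i}$ is the $i$-th coordinate; these are finite because the displacement $z\mapsto g(z)-z$ is $\mathbb{Z}^{2}$-periodic. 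The diameter bound gives $\tau_{i}(g)-\underline{\tau}_{i}(g)\leq M':=M+\mathrm{diam}(D)$ whenever $g$ is a lift of some $\varphi(f)$; moreover $\tau_{i}$ is subadditive, $\underline{\tau}_{i}$ is superadditive, and $\tau_{i}(g^{-1})=-\underline{\tau}_{i}(g)$. Combining these, a direct computation yields $\tau_{i}([a,b])\leq 2M'$ and $\underline{\tau}_{i}([a,b])\geq -2M'$ for every commutator. Writing an arbitrary $f\in G_{\theta_{0}}$ as a product of at most $N$ commutators, the displacement $\tilde{\varphi}_{\theta_{0}}(f)(z)-z$ is then bounded by $2NM'$ in each coordinate, uniformly in $f$ and $z$; hence every orbit of $\tilde{\varphi}_{\theta_{0}}(G_{\theta_{0}})$ is bounded in $\mathbb{R}^{2}$.

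Once boundedness is established, I would fix $z_{0}$ and consider $\mathcal{K}=\overline{\tilde{\varphi}_{\theta_{0}}(G_{\theta_{0}})\cdot z_{0}}$, which is compact (bounded orbit), connected (continuous image of the connected group $G_{\theta_{0}}$) and invariant. The unbounded component of its complement in $\mathbb{R}^{2}$ is simply connected, so the argument of Lemma \ref{invariantcompact} applies verbatim: the induced action on its space of prime ends is an action of $\mathrm{Homeo}_{c}(\mathbb{R})$ on a circle, which has a fixed prime end by Proposition \ref{actionsdersurs}; using the density of accessible prime ends (and, when the fixed set has empty interior, the subgroups $G_{J}$ together with the description of actions of $\mathrm{Homeo}_{c}(\mathbb{R})$ on the line from Section 3) one obtains a point $p\in\mathbb{R}^{2}$ fixed by the whole group $\tilde{\varphi}_{\theta_{0}}(G_{\theta_{0}})$. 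Its projection $\Pi(p)$ is then a global fixed point of $\varphi(G_{\theta_{0}})=\varphi(G_{x})$, and since $x$ was arbitrary this proves the theorem.

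The main obstacle is the boundedness of orbits: unlike the annulus case, where the boundary circles already produce fixed points via Proposition \ref{actionsdessurs}, on the torus there is no boundary, and the diameter estimate of Lemma \ref{domfondann} controls only the oscillation of the displacement, not its ``translation part''. Since $\mathrm{Homeo}_{c}(\mathbb{R})$ is not amenable, there is no invariant measure to average against; the uniform perfectness of $G_{\theta_{0}}$, which bounds the translation part of each commutator by the oscillation constant $M'$, is precisely what lets one bypass this difficulty and is the crux of the proof.
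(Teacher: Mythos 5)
Your proof is correct, and its skeleton --- a uniform diameter bound for lifts of the fundamental domain, a homomorphic lift of $\varphi_{|G_{x}}$ to $\mathrm{Homeo}_{\mathbb{Z}^{2}}(\mathbb{R}^{2})$ with bounded orbits, then a compact connected invariant set handled by prime ends --- matches the paper's. Where you genuinely diverge is at the crux you correctly single out: boundedness of orbits of the lift. The paper observes that $(f,g)\mapsto \widetilde{fg}^{-1}(\tilde f(\tilde g(0)))$ is a \emph{bounded} $\mathbb{Z}^{2}$-valued $2$-cocycle on $G_{x}\cong\mathrm{Homeo}_{c}(\mathbb{R})$ and invokes the vanishing of $H^{2}_{b}(\mathrm{Homeo}_{c}(\mathbb{R}),\mathbb{Z}^{2})$ (Mather, Matsumoto--Morita) to correct the normalized lift by a bounded map $b$, getting a morphism whose orbit of $0$ is bounded by construction. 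You instead take the canonical homomorphic lift furnished by contractibility and perfectness and bound its displacement directly: the diameter estimate of Lemma \ref{domfond} controls $\tau_{i}-\underline{\tau}_{i}$ for any lift, the subadditivity of $\tau_{i}$ together with $\tau_{i}(g^{-1})=-\underline{\tau}_{i}(g)$ kills the translation part of every commutator, and uniform perfectness of $G_{x}$ then gives the uniform bound $2NM'$ on all displacements. This is essentially an unpacking of the bounded-cohomology step --- the Matsumoto--Morita vanishing is itself deduced from uniform perfectness --- but your version is more elementary and self-contained, and it yields a slightly stronger conclusion (all orbits uniformly bounded, not just the orbit of $0$), at the price of having to justify or cite the uniform perfectness of $\mathrm{Homeo}_{c}(\mathbb{R})$ (true, with every element a single commutator by the standard infinite-repetition trick, but you state it without reference). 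Your endgame also differs mildly: you take the closure of a single orbit in $\mathbb{R}^{2}$ and feed it to the argument of Lemma \ref{invariantcompact}, whereas the paper saturates the half-plane $(-\infty,0]\times\mathbb{R}$, passes to the quotient annulus $\mathbb{R}\times\mathbb{R}/\mathbb{Z}$, and runs the prime-end argument there; both routes work. The one imprecision worth fixing is that the unbounded component of the complement of your compact set is simply connected only after adjoining the point at infinity --- the same convention the paper uses implicitly in Lemma \ref{invariantcompact}.
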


\begin{corollary}
The action $\varphi$ admits an invariant essential circle: the study of this action reduces to the study of an action on an annulus.
\end{corollary}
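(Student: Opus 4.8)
The plan is to reduce the statement to finding a point of a cylinder cover of $\mathbb{T}^{2}$ fixed by a canonical lift of $\varphi|_{G_{x}}$, and then to produce that point by the prime-end method of the previous section; the genuinely new input, compared with the annulus, is that the cylinder has no boundary to play the role of the walls $\{r_{1},r_{2}\}\times\mathbb{R}$, so that Proposition \ref{actionsdersurs} is forced to yield a fixed prime end. First I would fix $x$ and write $H=G_{x}$, so that $H\cong\mathrm{Homeo}_{c}(\mathbb{R})$ is contractible, simple and perfect. Since $\varphi(\mathrm{SO}(2))$ is a circle subgroup of $\mathrm{Homeo}_{0}(\mathbb{T}^{2})$, it is conjugate to a linear action (\cite{Kol}), and after a further conjugacy by an element of $\mathrm{SL}(2,\mathbb{Z})$ I may assume $\varphi(R_{\alpha})(u,v)=(u,v+\alpha)$. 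Let $\pi:\hat{A}=\mathbb{S}^{1}_{u}\times\mathbb{R}_{\tilde{v}}\to\mathbb{T}^{2}$ be the cyclic cover unwrapping the $v$-coordinate, with deck translation $T:(u,\tilde{v})\mapsto(u,\tilde{v}+1)$. Every $\varphi(f)$ lies in $\mathrm{Homeo}_{0}(\mathbb{T}^{2})$, hence acts trivially on $H_{1}(\mathbb{T}^{2})$ and lifts to $\hat{A}$; and since $H$ is contractible and perfect, exactly the argument of Lemma \ref{lift} produces a homomorphic lift $\tilde{\varphi}:H\to\mathrm{Homeo}_{\mathbb{Z}}(\hat{A})$ of $\varphi|_{H}$ commuting with $T$. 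A point of $\hat{A}$ fixed by $\tilde{\varphi}(H)$ projects to a point of $\mathbb{T}^{2}$ fixed by $\varphi(H)$, so it suffices to produce a fixed point of $\tilde{\varphi}(H)$.

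Next I would prove the cylinder analogue of Lemma \ref{domfondann}: there is $M>0$ with $\left\| \tilde{\varphi}(f)(p)-p \right\|\le M$ for all $f\in H$ and all $p\in\hat{A}$. If not, choose $f_{n}\in H$ with displacement $\ge n$, apply the logarithmic word-length estimate (deduced from \cite{Mil}, after \cite{Avi}) to the sequence $(f_{n})$ inside $H\cong\mathrm{Homeo}_{c}(\mathbb{R})$ to obtain a finite $S\subset H$ with $f_{n}\in\langle S\rangle$ and $l_{S}(f_{n})\le C\log n+C'$; writing $f_{n}$ as such a word and using that $\tilde{\varphi}$ is a homomorphism bounds the displacement by $(C\log n+C')\max_{s\in S}\left\| \tilde{\varphi}(s)(\cdot)-\cdot \right\|_{\infty}$, contradicting $\ge n$. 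Hence the $\tilde{\varphi}(H)$-orbits are bounded in the $\tilde{v}$-direction.

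Now set $F^{+}=\overline{\bigcup_{f\in H}\tilde{\varphi}(f)\big(\mathbb{S}^{1}_{u}\times(-\infty,0]\big)}$. The bound gives $F^{+}\subset\mathbb{S}^{1}_{u}\times(-\infty,M]$, and $F^{+}$ is closed, $\tilde{\varphi}(H)$-invariant, a neighbourhood of the bottom end, and connected: indeed for $\tilde{v}(q)\le -M$ one has $\tilde{\varphi}(f)^{-1}(q)\in\mathbb{S}^{1}_{u}\times(-\infty,0]$, so every image contains the connected set $\mathbb{S}^{1}_{u}\times(-\infty,-M]$. Capping the top end, $\hat{B}=\hat{A}\cup\{+\infty\}$ is a plane on which $\tilde{\varphi}(H)$ and $T$ act fixing $+\infty$. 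Let $\hat{U}^{+}$ be the component of $\hat{B}\setminus F^{+}$ containing $+\infty$, with its bounded complementary components filled in; it is open, simply connected, relatively compact, $\tilde{\varphi}(H)$-invariant, and $+\infty$ is interior, so every prime end of $\hat{U}^{+}$ has its principal set in the finite region $\mathbb{S}^{1}_{u}\times(-\infty,M]$. The induced action of $H\cong\mathrm{Homeo}_{c}(\mathbb{R})$ on the circle of prime ends of $\hat{U}^{+}$ has, by Proposition \ref{actionsdersurs}, a fixed prime end $\xi$, whose principal set is a nonempty compact connected $\tilde{\varphi}(H)$-invariant subset of $\hat{A}$. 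Feeding this continuum into the mechanism inside the proof of Lemma \ref{invariantcompact} (pass to an unbounded component of its complement, whose circle of prime ends carries a fixed prime end by Proposition \ref{actionsdersurs}; then either density of accessible prime ends yields a fixed accessible principal point, or the empty-interior case is settled by the nested intersection of the fixed-point sets of $\tilde{\varphi}(G_{J})$ as in Section 3) produces a genuine fixed point of $\tilde{\varphi}(H)$, and projecting by $\pi$ finishes.

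The main obstacle is precisely the absence of boundary. In the annulus the sets $F_{\theta_{0}}$ were nonempty for free, from the fixed points on the two boundary circles, and the prime-end arc had genuine endpoints on the walls $\{r_{1},r_{2}\}\times\mathbb{R}$, so the interior action could be fixed-point-free (Lemma \ref{invariantcompact}); on the cylinder the analogous prime-end space is a whole circle, and Proposition \ref{actionsdersurs} then \emph{forces} a fixed prime end. I expect the technical heart to be twofold: the uniform displacement bound of the second paragraph (so that $F^{+}$ stays below the top end and $\hat{U}^{+}$ is a genuine, relatively compact neighbourhood of $+\infty$), and the verification that $\hat{U}^{+}$ may be taken simply connected so that its space of prime ends is a circle; both follow the template of Lemmas \ref{domfondann}--\ref{lift} and the topological set-up of the previous section.
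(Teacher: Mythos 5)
Your outline reproduces, almost step for step, the paper's proof of Theorem \ref{fixedpoint} (bounded displacement via the logarithmic word-length estimate, a homomorphic lift with bounded orbits obtained from contractibility and perfectness, the swept-out set $F$, prime ends of the complementary component, Proposition \ref{actionsdersurs}, and a compactness argument converting a fixed prime end into a genuine fixed point); the paper then deduces the corollary from that theorem in two lines. The choice of cover is immaterial: the paper lifts to $\mathbb{R}^{2}$ and then projects to the cylinder unwrapping the direction transverse to the rotation, while your cylinder unwraps the rotation direction, and both work because the displacement bound controls both coordinates. Two points, however, deserve attention.

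First, your displacement bound is not quite the paper's. You apply the logarithmic word-length lemma \emph{inside} $H=G_{x}$, so you need a finite set $S\subset H$ on which the homomorphic lift $\tilde{\varphi}$ is defined; the lemma as cited (deduced from Lemma 4.4 of \cite{Mil}) is stated for sequences in $\mathrm{Homeo}_{0}(\mathbb{S}^{1})$ and produces $S\subset\mathrm{Homeo}_{0}(\mathbb{S}^{1})$, on which $\tilde{\varphi}$ does not exist. An analogue for $\mathrm{Homeo}_{c}(\mathbb{R})$ is plausible but is not what is cited, and it is not automatic: a sequence in $G_{x}$ need not be supported in a common compact subset of $\mathbb{S}^{1}\setminus\{x\}$. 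The paper circumvents this by bounding only the diameter of the image of a fundamental domain (a quantity independent of the choice of lift, so the lemma for $\mathrm{Homeo}_{0}(\mathbb{S}^{1})$ suffices) and then killing the residual translation ambiguity of the homomorphic lift via the vanishing of $H^{2}_{b}(G,\mathbb{Z}^{2})$ (\cite{Math2}, \cite{MM}) --- a step your write-up omits entirely and which is the real technical content of the lift lemma in the torus case, where no boundary is available to anchor the lift. Second, you never actually prove the statement of the corollary: you stop at a fixed point $p$ of $\varphi(G_{x})$. One must still check that $\left\{\varphi(R_{\alpha})(p),\ \alpha\in\mathbb{S}^{1}\right\}$ is an essential circle (it is $\{p_{1}\}\times\mathbb{S}^{1}$ in the normalized coordinates) and that it is invariant under all of $\varphi(\mathrm{Homeo}_{0}(\mathbb{S}^{1}))$; this uses the factorization $g=R_{\beta}f$ with $f(x)=x$ together with the continuity of $\varphi$ to see that $\varphi(f)$ fixes $p$ for every $f$ fixing $x$ (not merely a neighbourhood of $x$), exactly as in Lemma \ref{fol}. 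Both issues are repairable, but as written they are gaps.
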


Here, essential means non-separating so that the surface obtained by cutting along this curve is an annulus. Using the first part of Theorem \ref{cerclesursurfaces} which was proved in last section, this corollary implies directly the second part of this theorem. Let us first see why this theorem implies the corollary.

\begin{proof}[Proof of the corollary]
First, the image under the morphism $\varphi$ of the group $\mathbb{S}^{1}$ of rotations of the circle is conjugate to the subgroup of rotations of the torus of the form:
$$\begin{array}{rcl}
\mathbb{T}^{2}= \mathbb{S}^{1} \times \mathbb{S}^{1} & \rightarrow & \mathbb{T}^{2}\\
(\theta_{1},\theta_{2}) & \mapsto & (\theta_{1}, \theta_{2}+\alpha)
\end{array},$$
where $\alpha \in \mathbb{S}^{1}$. Therefore, after possibly conjugating, we may suppose that the morphism $\varphi$ sends the rotation of the circle of angle $\alpha$ on the rotation $(\theta_{1},\theta_{2}) \mapsto (\theta_{1}, \theta_{2}+\alpha)$.

Fix a point $x_{0}$ on the circle. If $p$ is a fixed point for the group $\varphi(G_{x_{0}})$, then the essential circle $\left\{ \varphi(R_{\alpha})(p), \alpha \in \mathbb{S}^{1} \right\}$ is invariant under the action $\varphi$. The proof of this last claim is similar to the proof of Lemma \ref{fol}.
\end{proof}

\begin{proof}[Proof of Theorem \ref{fixedpoint}]
Let us begin by giving a sketch of the proof of this theorem. First, we prove that the diameters of the images of the fundamental domain $[0,1]^{2}$ under lifts of homeomorphisms in the image of $\varphi$ are uniformly bounded. Then we prove that the restriction of the morphism $\varphi$ to a subgroup of the form $G_{x}$ lifts to a group morphism $\widetilde{\varphi_{x}}: G_{x} \rightarrow \mathrm{Homeo}_{\mathbb{Z}^{2}}(\mathbb{R}^{2})$, where this last group is the group of homeomorphisms of $\mathbb{R}^{2}$ which commute to the integral translations. We prove that the action $\widetilde{\varphi_{x}}$ can be chosen so that it has a bounded orbit. Using these facts, we find a connected subset of $\mathbb{R}^{2}$ with empty interior which is invariant under the action of $\widetilde{\varphi_{x}}$. Using prime ends, we can prove that the action $\varphi_{|G_{x}}$ has a fixed point on the projection of this connected set.

For a homeomorphism $g$ in $\mathrm{Homeo}_{0}(\mathbb{T}^{2})$, we denote by $\tilde{g}$ the lift of $g$ to $\mathrm{Homeo}_{\mathbb{Z}^{2}}(\mathbb{R}^{2})$  (this means that $\pi \circ \tilde{g}= g \circ \pi$ where $\pi: \mathbb{R}^{2} \rightarrow \mathbb{T}^{2}= \mathbb{R}^{2}/\mathbb{Z}^{2}$ is the projection
) with $\tilde{g}(0) \in [-1/2,1/2) \times [-1/2,1/2)$. Let us denote by $D \subset \mathbb{R}^{2}$ the fundamental domain $[0,1]^{2}$ for the action of $\mathbb{Z}^{2}$ on $\mathbb{R}^{2}$.

\begin{lemma}\label{domfond}
The map $\mathrm{Homeo}_{0}(\mathbb{S}^{1}) \rightarrow \mathbb{R}_{+}$ which associates, to any homeomorphism $f$ in $\mathrm{Homeo}_{0}(\mathbb{S}^{1})$, the diameter of the image under $\widetilde{\varphi(f)}$ (or equivalently under any lift of $\varphi(f)$) of the fundamental domain $D$ is bounded.
\end{lemma}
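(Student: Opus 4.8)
The plan is to repeat almost verbatim the argument used for Lemma \ref{domfondann}, the only change being that the band $[r_{1},r_{2}]\times\mathbb{R}$ is replaced by the whole plane $\mathbb{R}^{2}$ and the group $\mathbb{Z}$ of vertical translations by the group $\mathbb{Z}^{2}$. The algebraic input is exactly the same: the word-length estimate deduced from \cite{Mil} (Lemma 4.4, inspired by \cite{Avi}) provides constants $C>0$ and $C'\in\mathbb{R}$ such that any sequence $(f_{n})_{n}$ in $\mathrm{Homeo}_{0}(\mathbb{S}^{1})$ lies in a single finitely generated subgroup $\langle S\rangle$ with $l_{S}(f_{n})\le C\log(n)+C'$. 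Since the diameter of $\widetilde{\varphi(f)}(D)$ does not depend on the chosen lift (two lifts differ by an element of $\mathbb{Z}^{2}$, which preserves diameters), it suffices to bound it for the normalized lift.

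First I would argue by contradiction: suppose there is a sequence $(f_{n})_{n}$ with $\mathrm{diam}(\widetilde{\varphi(f_{n})}(D))\ge n$ for every $n$. Applying the word-length lemma to this sequence, one obtains a finite set $S\subset\mathrm{Homeo}_{0}(\mathbb{S}^{1})$ and factorizations $f_{n}=s_{1,n}s_{2,n}\cdots s_{w_{n},n}$ with $s_{i,j}\in S\cup S^{-1}$ and $w_{n}\le C\log(n)+C'$.

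The one genuine point to check, and the only place where the plane differs from the band, is the finiteness of the constant
$$M=\max_{s\in S\cup S^{-1}}\ \sup_{x\in\mathbb{R}^{2}}\bigl\|\widetilde{\varphi(s)}(x)-x\bigr\|.$$
This is immediate: each $\widetilde{\varphi(s)}$ belongs to $\mathrm{Homeo}_{\mathbb{Z}^{2}}(\mathbb{R}^{2})$, hence commutes with all integral translations, so the displacement map $x\mapsto\widetilde{\varphi(s)}(x)-x$ is $\mathbb{Z}^{2}$-periodic; being continuous, it is bounded, and its supremum over $x$ is attained on the compact fundamental domain $D$. Once $M<+\infty$ is known, a telescoping estimate along the factorization gives, for any $x\in D$, the bound $\|\widetilde{\varphi(f_{n})}(x)-x\|\le w_{n}M$, whence
$$\mathrm{diam}\bigl(\widetilde{\varphi(f_{n})}(D)\bigr)\le 2w_{n}M+\mathrm{diam}(D)\le 2M\bigl(C\log(n)+C'\bigr)+\mathrm{diam}(D).$$
The right-hand side grows at most logarithmically in $n$, contradicting the linear lower bound $\mathrm{diam}(\widetilde{\varphi(f_{n})}(D))\ge n$, and the lemma follows.

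I do not expect any real obstacle here: the proof is purely formal and, as in the annulus case, it does not use the continuity of $\varphi$. The only thing to be careful about is the two-dimensional periodicity argument guaranteeing $M<+\infty$, which replaces the one-dimensional periodicity used in the proof of Lemma \ref{domfondann}.
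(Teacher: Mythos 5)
Your proposal is correct and follows exactly the route the paper takes: the paper's proof of this lemma consists of the single remark that it is ``almost identical to the proof of Lemma \ref{domfondann}'', and your argument is precisely that proof transposed from the band to the plane (word-length estimate from \cite{Mil}, finiteness of the displacement constant $M$ via $\mathbb{Z}^{2}$-periodicity, telescoping, logarithmic versus linear growth). Your explicit check that $M<+\infty$ and that the diameter is lift-independent makes precise exactly what the paper leaves implicit.
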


\begin{proof}
The proof of this lemma is almost identical to the proof of Lemma \ref{domfondann}.
\end{proof}

Let $x_{0}$ be a point of the circle.

\begin{lemma}
There exists a group morphism $\widetilde{\varphi_{x_{0}}} : G_{x_{0}} \rightarrow \mathrm{Homeo}_{\mathbb{Z}^{2}}(\mathbb{R}^{2})$ such that:
\begin{itemize}
\item for any homeomorphism $f$ in $G_{x_{0}}$, $\Pi \circ \widetilde{\varphi_{x_{0}}}(f)= \varphi(f)$, where $\Pi:\mathrm{Homeo}_{\mathbb{Z}^{2}}(\mathbb{R}^{2}) \rightarrow \mathrm{Homeo}_{0}(\mathbb{T}^{2})$ is the projection;
\item the subset $\left\{\widetilde{\varphi_{x_{0}}}(f)(0), f \in G_{x_{0}} \right\}$ is bounded.
\end{itemize}
Moreover, the morphism $\widetilde{\varphi_{x_{0}}}$ is continuous.
\end{lemma}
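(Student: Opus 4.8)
The plan is to reproduce, almost verbatim, the covering-space argument of Lemma~\ref{lift}, the only genuinely new ingredient being the boundedness of the orbit of $0$: on the torus this can no longer be read off from fixed points on a boundary circle, so a different mechanism is needed there. First I would construct the lift. Since the topological group $G_{x_{0}}$ is contractible and $\Pi:\mathrm{Homeo}_{\mathbb{Z}^{2}}(\mathbb{R}^{2})\to\mathrm{Homeo}_{0}(\mathbb{T}^{2})$ is a covering whose deck group is the central subgroup of integral translations $\mathbb{Z}^{2}$, the restriction $\varphi_{|G_{x_{0}}}$ lifts to a unique continuous map $\eta:G_{x_{0}}\to\mathrm{Homeo}_{\mathbb{Z}^{2}}(\mathbb{R}^{2})$ with $\eta(\mathrm{Id})=\mathrm{Id}$. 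The map $(f,g)\mapsto\eta(fg)^{-1}\eta(f)\eta(g)$ is continuous, is a lift of the identity and hence takes values in the discrete group $\mathbb{Z}^{2}$, and is defined on the connected space $G_{x_{0}}\times G_{x_{0}}$; it is therefore constant, equal to its value $\mathrm{Id}$ at $(\mathrm{Id},\mathrm{Id})$, so that $\eta$ is a group morphism. As in the annulus case this is the only lifting morphism, since two such differ by a morphism $G_{x_{0}}\to\mathbb{Z}^{2}$, which is trivial because $G_{x_{0}}$ is perfect. This gives the first item, and continuity holds by construction, so it remains only to bound the orbit of $0$.

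For the second item I would use two facts. First, the integral translations are \emph{central} in $\mathrm{Homeo}_{\mathbb{Z}^{2}}(\mathbb{R}^{2})$. Second, Lemma~\ref{domfond} together with the normalization $\widetilde{\varphi(f)}(0)\in[-1/2,1/2)^{2}$ bounds the displacement $\mathrm{disp}(g):=\sup_{x}\|g(x)-x\|$ of the normalized lift uniformly: since $x\mapsto\widetilde{\varphi(f)}(x)-x$ is $\mathbb{Z}^{2}$-periodic its supremum is attained on $D$, where it is controlled by $\mathrm{diam}(\widetilde{\varphi(f)}(D))$, so $\mathrm{disp}(\widetilde{\varphi(f)})\le C'$ for a constant $C'$ independent of $f$. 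The key observation is that commutators do not see the translation ambiguity: for any $a,b\in G_{x_{0}}$ the lifts $\eta(a)$ and $\widetilde{\varphi(a)}$ differ by a central translation, which cancels in a commutator, so that $[\eta(a),\eta(b)]=[\widetilde{\varphi(a)},\widetilde{\varphi(b)}]$ and hence $\mathrm{disp}([\eta(a),\eta(b)])\le 2\,\mathrm{disp}(\widetilde{\varphi(a)})+2\,\mathrm{disp}(\widetilde{\varphi(b)})\le 4C'$, uniformly in $a$ and $b$.

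To finish I would invoke the \emph{uniform} perfectness of $G_{x_{0}}$, which is isomorphic to the group of compactly supported homeomorphisms of an interval: there is an integer $N$ such that every $f\in G_{x_{0}}$ is a product of at most $N$ commutators. Writing $f=\prod_{i=1}^{N}[a_{i},b_{i}]$ and applying the morphism $\eta$ gives $\eta(f)=\prod_{i=1}^{N}[\widetilde{\varphi(a_{i})},\widetilde{\varphi(b_{i})}]$, so that $\|\eta(f)(0)\|\le\mathrm{disp}(\eta(f))\le\sum_{i=1}^{N}\mathrm{disp}([\widetilde{\varphi(a_{i})},\widetilde{\varphi(b_{i})}])\le 4NC'$, a bound independent of $f$. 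Hence $\{\eta(f)(0):f\in G_{x_{0}}\}$ is bounded, which is the second item.

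The step I expect to be the main obstacle is precisely this last one. Plain perfectness, which sufficed for uniqueness above, only bounds $\mathrm{disp}(\eta(f))$ by $4C'$ times the commutator length of $f$ and is useless without a \emph{uniform} bound $N$ on the commutator width; equivalently, one needs that $G_{x_{0}}$ carries no nonzero homogeneous quasimorphism, so that the $\mathbb{Z}^{2}$-valued defect $f\mapsto\eta(f)(0)-\widetilde{\varphi(f)}(0)$ stays bounded. I would therefore concentrate the real work on justifying (or citing) the uniform perfectness of $\mathrm{Homeo}_{c}$ of an interval, which is the ingredient replacing the boundary fixed points available in Lemma~\ref{lift}.
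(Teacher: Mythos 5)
Your proposal is correct, but the mechanism you use for the crucial second item is genuinely different from the paper's. The paper keeps the normalized (non-multiplicative) lifts $\widetilde{\varphi(f)}$ and observes that $(f,g)\mapsto \widetilde{fg}^{-1}(\tilde f(\tilde g(0)))$ is a \emph{bounded} $\mathbb{Z}^{2}$-valued $2$-cocycle (bounded by Lemma~\ref{domfond}); it then invokes the vanishing of $H^{2}_{b}(\mathrm{Homeo}_{c}(\mathbb{R}),\mathbb{Z}^{2})$ (Mather, Matsumoto--Morita) to write this cocycle as the coboundary of a bounded map $b$, sets $\widetilde{\varphi_{x_{0}}}(f)=\widetilde{\varphi(f)}+b(f)$, and only afterwards identifies this morphism with the covering-space lift $\eta$ to get continuity. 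You instead take $\eta$ as the definition from the start (so continuity is automatic) and bound its orbit of $0$ by combining centrality of the deck translations, the uniform displacement bound coming from Lemma~\ref{domfond}, and uniform perfectness of $G_{x_{0}}$. These are two faces of the same coin -- vanishing of $H^{2}_{b}$ versus absence of nontrivial quasimorphisms -- but your route replaces a bounded-cohomology citation by a commutator-width citation. The ingredient you flag as the main obstacle is in fact already available inside the paper: in the proof of Lemma~\ref{generate} the author cites Lemma~4.6 of \cite{Mil}, which states that every homeomorphism supported in the interior of a proper interval $I\subsetneq\mathbb{S}^{1}$ is a single commutator of homeomorphisms supported in $I$; since every element of $G_{x_{0}}$ is supported in such an interval avoiding $x_{0}$, this gives commutator width $N=1$ in $G_{x_{0}}$ itself, and your estimate $\|\eta(f)(0)\|\le 4NC'$ closes the argument. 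The remaining small checks (periodicity of $x\mapsto g(x)-x$ so that the displacement is controlled on $D$, subadditivity of displacement under composition and inversion, and cancellation of central translations in commutators) are all routine and correctly handled in your write-up.
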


\begin{proof}
Let $G= \varphi(G_{x_{0}})$. Observe that the map
$$\begin{array}{rcl}
G \times G & \rightarrow & \mathbb{Z}^{2} \\
(f,g) & \mapsto & \widetilde{fg}^{-1} (\tilde{f}(\tilde{g}(0)))
\end{array}$$
defines a 2-cocycle on the group $G$ (see \cite{Bro} or \cite{Ghy} for more about the cohomology of groups). Moreover, by Lemma \ref{domfond}, this cocycle is bounded. However, as the group $G$ is isomorphic to the group $\mathrm{Homeo}_{c}(\mathbb{R})$, the group $H^{2}_{b}(G, \mathbb{Z}^{2})$ is trivial (see \cite{Math2} and \cite{MM}). This implies that there exists a bounded map $b: G \rightarrow \mathbb{Z}^{2}$ such that:
$$ \forall f, g \in G, \ \widetilde{fg}^{-1}(\tilde{f}(\tilde{g}(0)))=b(f)+b(g)-b(fg).$$
It suffices then to take for $\widetilde{\varphi_{x_{0}}}$ the composition of the morphism $\varphi_{|G_{x_{0}}}$ with the morphism 
$$ \begin{array}{rcl}
G & \rightarrow &  \mathrm{Homeo}_{\mathbb{Z}^{2}}(\mathbb{R}^{2})\\
f & \mapsto & \tilde{f}+b(f)
\end{array}
.$$
For this action, the orbit of $0$ is bounded by construction. It suffices now to prove that this action is continuous. As the topological space $G_{x_{0}}$ is contractible and the map $\Pi: \mathrm{Homeo}_{\mathbb{Z}^{2}}(\mathbb{R}^{2}) \rightarrow \mathrm{Homeo}_{0}(\mathbb{T}^{2})$ is a covering, there exists a (unique) continuous map $\eta: G_{x_{0}} \rightarrow  \mathrm{Homeo}_{\mathbb{Z}^{2}}(\mathbb{R}^{2})$ which lifts the map $\varphi_{|G_{x_{0}}}$ and sends the identity to the identity. Then the map 
$$ \begin{array}{rcl}
G_{x_{0}} \times G_{x_{0}} & \rightarrow &  \mathrm{Homeo}_{\mathbb{Z}^{2}}(\mathbb{R}^{2}) \\
(f,g) & \rightarrow & \eta(fg)^{-1}\eta(f) \eta(g)
\end{array}
$$
is continuous and its image is contained in the discrete space of integral translations: it is constant and the map $\eta$ is a group morphism. Two group morphisms which lift the group morphism $\varphi_{| G_{x_{0}}}$ differ by a group morphism $G_{x_{0}} \rightarrow \mathbb{Z}^{2}$. However, as the group $G_{x_{0}}$ is simple (hence perfect), such a group morphism is trivial and $\eta =\widetilde{\varphi_{x_{0}}}$.
\end{proof}

We can now complete the proof of Theorem \ref{fixedpoint}. Let us denote by $F$ the closure of the set 
$$ \bigcup_{f \in G_{x_{0}}} \widetilde{\varphi_{x_{0}}}(f)((-\infty,0] \times \mathbb{R}).$$
By the two above lemmas, there exists $M>0$ such that $F \subset (-\infty,M] \times \mathbb{R}$. Denote by $U$ the connected component of the complement of $F$ which contains the open subset $(M,+\infty) \times \mathbb{R}$. Denote by $U'$ the image of $U$ under the projection $p_{2}: \mathbb{R} \times \mathbb{R} \rightarrow \mathbb{R} \times \mathbb{R}/\mathbb{Z}$ and by $\psi_{x_{0}}$ the action on the annulus $\mathbb{R} \times \mathbb{R} / \mathbb{Z}$ defined by 
$$ \forall f \in \mathrm{Homeo}_{0}(\mathbb{S}^{1}), \ \psi_{x_{0}}(f) \circ p_{2}=p_{2}\circ \widetilde{\varphi_{x_{0}}}(f).$$
Notice that $p_{1} \circ \psi_{x_{0}}= \varphi_{|G_{x_{0}}}\circ p_{1}$, where $p_{1}:\mathbb{R} \times \mathbb{R} / \mathbb{Z} \rightarrow \mathbb{R} / \mathbb{Z} \times \mathbb{R} / \mathbb{Z}= \mathbb{T}^{2}$.

By construction, the open set $U'$ is invariant under the action $\psi_{x_{0}}$ (a fundamental domain far on the right must be sent in $U'$ by any homeomorphism in the image of $\psi_{x_{0}}$, by the two above lemmas). Now, this action can be extended to the set of prime ends of $U'$, giving a continuous action $\psi$ of the group $G_{x_{0}}$ (which is isomorphic to the group $\mathrm{Homeo}_{c}(\mathbb{R})$) on the topological space of prime ends of $U'$, which is homeomorphic to $\mathbb{S}^{1}$. 

By Proposition \ref{actionsdersurs}, this last action admits a fixed point. Moreover, for any closed interval $I$ whose interior contains the point $x_{0}$, the set of fixed points of this action contains an open interval and hence an accessible prime end. Therefore, the intersection of the set $F_{I}$ of fixed points of the action $\psi_{x_{0}| G_{I}}$ with $[0,M] \times \mathbb{R}/ \mathbb{Z}$ is non-empty. Therefore, the set of fixed points of $\psi_{x_{0}}$, which is the intersection of the $F_{I}$'s, is non-empty. Theorem \ref{fixedpoint} is proved.
\end{proof}

\section{Case of the sphere and of the closed disc}

In this section, we discuss Conjecture \ref{cerclesursphere}. The following proposition is a first step toward this conjecture and was communicated to me by Kathryn Mann.

\begin{proposition}[Mann] \label{Mann}
Fix a morphism $\varphi: \mathrm{Homeo}_{0}(\mathbb{S}^{1}) \rightarrow \mathrm{Homeo}_{0}(\mathbb{S}^{2})$ (respectively $\varphi: \mathrm{Homeo}_{0}(\mathbb{S}^{1}) \rightarrow \mathrm{Homeo}_{0}(\mathbb{D}^{2})$). Then the action $\varphi$ has exactly two global fixed points on the sphere (respectively one global fixed point on the closed disc).
\end{proposition}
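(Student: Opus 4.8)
The plan is to mimic the template of the torus case (Theorem \ref{fixedpoint}), taking advantage of the fact that the sphere and the disc are simply connected so that the lifting/bounded-cohomology step evaporates, and then to feed the resulting information into the annulus analysis of Section 4. The starting point is the same normalization as everywhere else in the paper. Since $\varphi$ is continuous and injective, the image $\varphi(\mathrm{SO}(2))$ of the rotation subgroup is a compact subgroup of $\mathrm{Homeo}_{0}(\mathbb{S}^{2})$ (resp. $\mathrm{Homeo}_{0}(\mathbb{D}^{2})$) continuously isomorphic to $\mathbb{S}^{1}$; by Kerekjarto's theorem (\cite{Kol}) it is conjugate to the standard rotation subgroup. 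After conjugating $\varphi$, I may assume that $\varphi(R_{\alpha})$ is the rotation by $\alpha$ about the axis $\{N,S\}$ (resp. about the center $c$). The fixed-point set of this standard circle action is exactly $\{N,S\}$ (resp. $\{c\}$). As every global fixed point of $\varphi$ is in particular fixed by $\varphi(\mathrm{SO}(2))$, this already yields the upper bound: at most two global fixed points on the sphere and at most one on the disc.

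It remains to prove that these candidate points are genuinely fixed by the whole image. Here the key reduction is that it suffices to show $\varphi(G_{x_{0}})$ fixes $N$ and $S$ (resp. $c$) for a single point $x_{0}$. Indeed, writing $F_{x}$ for the fixed-point set of $\varphi(G_{x})$, the relation $R_{\alpha}G_{x_{0}}R_{\alpha}^{-1}=G_{x_{0}+\alpha}$ gives $F_{x_{0}+\alpha}=\varphi(R_{\alpha})(F_{x_{0}})$, and since $N,S$ (resp. $c$) are rotation-invariant, fixing them under $\varphi(G_{x_{0}})$ forces them to belong to every $F_{x}$. Because the subgroups $G_{x}$ and $G_{y}$ with $x\neq y$ generate $\mathrm{Homeo}_{0}(\mathbb{S}^{1})$ (fragmentation: a sufficiently small arc avoids $x$ or avoids $y$), a point lying in all the $F_{x}$ is a global fixed point. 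Equivalently, setting $\mathcal{O}=\{\varphi(g)(c):g\in\mathrm{Homeo}_{0}(\mathbb{S}^{1})\}$ for the orbit of the center, $\mathcal{O}$ is connected (as the continuous image of the path-connected group) and rotation-invariant; hence its image under the radius function is an interval containing $0$, so $\mathcal{O}$ is either $\{c\}$ or contains an entire round disc about $c$, and the task is to exclude the second possibility.

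The crux, and the main obstacle, is therefore to prove that $\varphi(G_{x_{0}})$ fixes the poles (resp. the center). For this I would argue exactly as in Lemma \ref{invariantcompact} and Theorem \ref{fixedpoint}: since $G_{x_{0}}\cong\mathrm{Homeo}_{c}(\mathbb{R})$ acts continuously and the ambient surface is simply connected, I would use the bounded-orbit construction to produce a nonempty compact connected invariant continuum for $\varphi(G_{x_{0}})$, pass to its prime-end compactification, and invoke Proposition \ref{actionsdersurs} (every action of $\mathrm{Homeo}_{c}(\mathbb{R})$ on the circle has a fixed point) together with the density of accessible prime ends to extract a genuine fixed point of $\varphi(G_{x_{0}})$ inside the surface. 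The delicate part is to show that the fixed points obtained this way are precisely the rotation-fixed points: I would exploit that the family $(F_{x})_{x}$ is permuted by the rotations via $F_{x_{0}+\alpha}=\varphi(R_{\alpha})(F_{x_{0}})$, so that any accumulation of these invariant continua that is consistent with the rotation symmetry must sit on the rotation axis $\{N,S\}$ (resp. at $c$); this is the spherical/disc analogue of the accumulation arguments used to rule out the bad configurations in Section 4. I expect this localization step to be the hardest point, since a priori $\varphi(G_{x_{0}})$ could fix interior points off the axis, and excluding this is exactly what forbids $\mathcal{O}$ from containing an open disc.

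Once $\varphi(G_{x_{0}})$ is known to fix $N,S$ (resp. $c$), the complement $\mathbb{S}^{2}-\{N,S\}$ (resp. $\mathbb{D}^{2}-\{c\}$) is an invariant annular region, and the description of the action on it provided by Theorem \ref{cerclesursurfaces} and Proposition \ref{outside} shows that no further global fixed points arise: on each transitive band the group $\varphi(G_{\theta})$ has no interior fixed point, so the only global fixed points are the collapsed boundary circles, i.e. exactly $\{N,S\}$ and $\{c\}$. This matches the values $\chi(\mathbb{S}^{2})=2$ and $\chi(\mathbb{D}^{2})=1$, which is the conceptual reason behind the count and a useful consistency check, and completes the proof of Proposition \ref{Mann}.
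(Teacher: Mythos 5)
Your normalization via Kerekjarto and the resulting upper bound (any global fixed point is fixed by $\varphi(\mathrm{SO}(2))$, whose fixed set is $\{N,S\}$, resp.\ $\{c\}$) are correct and agree with the paper, and your reduction to showing that $\varphi(G_{x_{0}})$ fixes the poles is also sound. But that reduction is where the entire difficulty lives, and the route you sketch for it does not work. The prime-end/invariant-continuum machinery of Sections 4--5 only produces \emph{some} fixed point of $\varphi(G_{x_{0}})$; nothing in that construction forces the fixed point to be a pole (in the model actions $\varphi_{K,\lambda}^{\mathbb{S}^{2}}$ the fixed set $F_{x_{0}}$ is a whole arc from $N$ to $S$, and a priori it could instead be a continuum missing both poles). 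Your proposed localization --- that the rotation equivariance $F_{x_{0}+\alpha}=\varphi(R_{\alpha})(F_{x_{0}})$ forces accumulation on the axis --- is a hope, not an argument: the sets $F_{x}$ being permuted by rotations is perfectly compatible with none of them containing $N$ or $S$. You acknowledge this is the hardest point and leave it unresolved. Moreover, your closing appeal to Theorem \ref{cerclesursurfaces} and Proposition \ref{outside} on $\mathbb{S}^{2}-\{N,S\}$ is unavailable: that complement is an \emph{open} annulus, and the paper points out at the end of Section 6 that Lemma \ref{domfondann} fails there --- this is precisely why Conjecture \ref{cerclesursphere} remains open. (That last step is also unnecessary, since the upper bound was already settled by the rotation subgroup alone.)

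The paper's proof bypasses all of this with a short algebraic argument that is the missing idea. Let $A$ be the set of circle homeomorphisms commuting with some nontrivial finite-order rotation. If $f$ commutes with $R_{1/n}$, then $\varphi(f)$ commutes with $\varphi(R_{1/n})$ and hence preserves its fixed-point set, which is exactly $\{N,S\}$ because after the Kerekjarto conjugation $\varphi(R_{1/n})$ is a standard nontrivial rotation of the sphere. Lemma \ref{generate} --- proved by fragmenting into homeomorphisms supported in short arcs and a commutator trick --- shows that $A$ generates $\mathrm{Homeo}_{0}(\mathbb{S}^{1})$, so the whole image preserves $\{N,S\}$; the induced morphism to $\mathbb{Z}/2$ is trivial by simplicity of $\mathrm{Homeo}_{0}(\mathbb{S}^{1})$, so $N$ and $S$ are genuinely fixed. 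No prime ends, bounded orbits, or invariant continua are required.
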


\begin{proof}
The case of the disc is almost identical to the case of the sphere and is left to the reader.

Identify the sphere with $\left\{ (x,y,z) \in \mathbb{R}^{3}, \ x^{2}+y^{2}+z^{2}=1 \right\}$.
By a theorem by Kerekjarto (see \cite{Kol}), the restriction of $\psi$ to the group of rotations of the circle $\mathbb{S}^{1} \subset \mathrm{Homeo}_{0}(\mathbb{S}^{1})$ is topologically conjugate to an action of the form:
$$\begin{array}{rcl}
\mathbb{S}^{1} & \rightarrow & \mathrm{Homeo}_{0}(\mathbb{S}^{2}) \\
\theta & \mapsto & (x,y,z) \mapsto (\cos(\theta)x-\sin(\theta)y, \sin(\theta)x + \cos(\theta)y,z)
\end{array}$$
The action of the circle induced by $\varphi$ has hence exactly two fixed points which we denote by $N$ and $S$. We prove now that the set $\left\{N,S \right\}$ is preserved under any element of the image of $\mathrm{Homeo}_{0}(\mathbb{S}^{1})$ under the morphism $\varphi$. Consider the subset $A \subset \mathrm{Homeo}_{0}(\mathbb{S}^{1})$ consisting of homeomorphisms which commute with a nontrivial finite order rotation of the circle. Then any element of $\varphi(A)$ preserves the set of fixed points of the image under $\varphi$ of a nontrivial finite order rotation. This last set is equal to $\left\{ N,S \right\}$. By the following lemma, each element of the group $\varphi(\mathrm{Homeo}_{0}(\mathbb{S}^{1}))$ preserves the set $\left\{N,S \right\}$.

\begin{lemma} \label{generate}
The set $A$ generates the group $\mathrm{Homeo}_{0}(\mathbb{S}^{1})$, \emph{i.e.} any homeomorphism in $\mathrm{Homeo}_{0}(\mathbb{S}^{1})$ can be written as a product of elements of $A$.
\end{lemma}

Now, the action $\psi$ restricted to the set $\left\{N,S \right\}$ induces a morphism $\mathrm{Homeo}_{0}(\mathbb{S}^{1}) \rightarrow \mathbb{Z} / 2$. As the group $\mathrm{Homeo}_{0}(\mathbb{S}^{1})$ is simple, such a morphism is trivial: Proposition \ref{Mann} is proved.
\end{proof}

\begin{proof}[Proof of Lemma \ref{generate}]
By the fragmentation lemma (see \cite{Bou} Theorem 1.2.3), any homeomorphism in $\mathrm{Homeo}_{0}(\mathbb{S}^{1})$ can be written as a product of homeomorphisms each supported in an interval whose length is smaller than $1/6$ (where the length of the circle is equal to $1$). Moreover, any homeomorphism supported in the interior of an interval $I \subsetneq \mathbb{S}^{1}$ can be written as a commutator:
$$f= f_{1}f_{2}f_{1}^{-1}f_{2}^{-1},$$
where $f_{1}$ and $f_{2}$ are homeomorphisms of the circle supported in $I$ (see \cite{Mil} Lemma 4.6). Thus it suffices to prove that any commutator of homeomorphisms supported in a same interval whose length is smaller than $1/6$ can be written as a product of elements of $A$.

Take an interval $I \subset \mathbb{S}^{1}$ whose length is smaller than $1/6$ and let $f_{1}$ and $f_{2}$ be two homeomorphisms supported in $I$. Let us denote by $R_{\theta}$ the rotation of the circle of angle $\theta$. For $i=1,2$, let $g_{i}$  be the homeomorphism defined by $g_{i|I}=f_{i}$ , $g_{i|R_{\frac{1}{2}}(I)}=R_{\frac{1}{2}}f_{i}R_{\frac{1}{2}}^{-1}$, and $g_{i}(x)=x$ elsewhere. Notice that the homeomorphisms $g_{1}$ and $g_{2}$ commute with the rotation $R_{\frac{1}{2}}$ and hence belong to the set $A$. Take a homeomorphism $h$ in $A$ which commutes with the order $3$ rotations such that $h(R_{\frac{1}{2}}(I)) \cap R_{\frac{1}{2}}(I)= \emptyset$ and $h_{|I}=Id_{I}$ (such a homeomorphism $h$ exists as the length of the interval $I$ is small enough). Then the homeomorphism $[g_{1},hg_{2}h^{-1}]$ is equal to $[g_{1},g_{2}]=[f_{1},f_{2}]$ on $I$, to $[Id,hg_{2}h^{-1}]=Id$ on $h(R_{\frac{1}{2}}(I))$, to $[g_{1}, Id]=Id$ on $R_{\frac{1}{2}}(I)$ and to the identity elsewhere. Hence:
$$[f_{1},f_{2}]=[g_{1},hg_{2}h^{-1}]$$
and Lemma \ref{generate} is proved.
\end{proof}

It is natural now to try to adapt the proof of Section 4 to prove Conjecture \ref{cerclesursphere}. As in the case of the annulus, we can find an invariant lamination by circles but there is a problem when this lamination does not accumulate on one of the global fixed point of this action: one has to study the actions of the group of orientation-preserving homeomorphisms of the circle on the open annulus or on the half-open annulus such that the groups of the form $G_{\theta}$ have no fixed point in the interior of these surfaces. If we try to adapt the proof of Subsection 4.2, we are confronted with a problem: Lemma \ref{domfondann} is false in this case (it is easy to find counter-examples) and it seems difficult to adapt it in our situation. However, this lemma seems to be the only problematic step for a proof of this conjecture.

\section*{Acknowledgement}

I would like to thank Frédéric Le Roux and Kathryn Mann for their careful reading of this article. I also thank Kathryn Mann for explaining me Proposition \ref{Mann}.

\end{document}